\date{}
\title{Scattering diagrams, Hall algebras and stability conditions}
\author{Tom Bridgeland}
\newtheorem{thm}{Theorem}[section]
\newtheorem{prop}[thm]{Proposition}
\newtheorem{lemma}[thm]{Lemma}
\theoremstyle{definition}
\newtheorem{defn}[thm]{Definition}
\newtheorem{thm*}[thm]{Theorem$^*$}
\newtheorem*{example*}{Example}
\newcommand {\A}{\mathcal A}
\newcommand{\T}{\mathcal{T}}
\newcommand{\F}{\mathcal{F}}
\newcommand{\W}{\mathcal{W}}
\renewcommand{\L}{\mathbb{L}}
\newcommand{\R}{\mathbb{R}}
\newcommand{\Q}{\mathbb{Q}}
\newcommand{\Z}{\mathbb{Z}}
\newcommand{\D}{\mathcal{D}}
\newcommand {\C}{\mathbb C}
\newcommand {\Hom}{\operatorname{Hom}}
\newcommand {\Ext}{\operatorname{Ext}}
\newcommand {\Rep}{\operatorname{Rep}}
\newcommand {\id}{\operatorname{id}}
\newcommand{\mat}[4]{\begin{pmatrix}#1&#2\\#3&#4\end{pmatrix}}
\newcommand{\DD}{\mathfrak{D}}
\newcommand{\dd}{\mathfrak{d}}
\renewcommand{\ss}{{\operatorname{ss}}}
\renewcommand{\leq}{\leqslant}
\renewcommand{\geq}{\geqslant}
\newcommand{\onto}{\twoheadrightarrow}
\newcommand{\lra}{\longrightarrow}
\newcommand {\<}{\langle}
\renewcommand {\>}{\rangle}
\newcommand {\Omit}[1]{}
\newcommand{\isom}{\cong}
\newcommand{\halft}{{\tfrac{1}{2}}}
\newcommand{\tensor}{\otimes}
\newcommand{\St}{\operatorname{St}}
\newcommand{\reg}{{\operatorname{reg}}}
\newcommand{\Hall}{{\operatorname{Hall}}}
\renewcommand{\P}{\mathcal{P}}
\newcommand{\CC}{\mathcal{C}}
\newcommand{\g}{\mathfrak{g}}
\newcommand{\h}{\mathfrak{h}}
\newcommand{\blank}{\kern.0mm\smash{-}\kern.0mm}
\newcommand{\B}{\mathcal{B}}
\newcommand{\lRa}[1]{\stackrel{#1}{\lra}}
\newcommand{\coker}{\operatorname{coker}}
\newcommand{\Stab}{\operatorname{Stab}}
\renewcommand{\Im}{\operatorname{Im}}
\newcommand{\M}{\mathcal{M}}
\newcommand{\esupp}{\operatorname{supp}_{\operatorname{ess}}}
\newcommand{\one}{\mathbf{1}}
\renewcommand{\mod}{\operatorname{mod}}
\newcommand{\rep}{\operatorname{rep}}
\newcommand{\CY}{CY$_3$\ }
\newcommand{\an}{\operatorname{an}}
\newcommand{\Spec}{\operatorname{Spec}}
\renewcommand{\sc}{{\operatorname{sc}}}
\newcommand{\fr}{\operatorname{rep}}
\newcommand{\mfr}{\operatorname{\mathcal{M}}}
\newcommand{\N}{\mathcal{N}}
\newcommand{\E}{\mathcal{E}}
\newcommand{\End}{\operatorname{End}}
\newcommand{\GL}{\operatorname{GL}}
\newcommand{\kst}[1]{K({\operatorname{St/{#1}}})}
\newcommand{\supp}{\operatorname{supp}}
\renewcommand{\S}{\mathfrak{S}}
\renewcommand{\O}{\mathcal{O}}
\newcommand{\CQ}{\C Q}
\newcommand{\grp}{{\widetilde{\GL^+}(2,\R)}}
\newcommand{\V}{\mathcal{V}}
\begin{document}
\begin{abstract}{To any quiver with relations we associate a consistent scattering diagram  
taking values in the motivic Hall algebra of its category of representations. We show that the chamber structure of this scattering diagram coincides with the natural chamber structure in an open subset of the space of stability conditions on the associated triangulated category. In the three-dimensional Calabi-Yau situation, when the relations arise from a potential,  we  can apply an integration map to give a consistent scattering diagram taking values in a tropical vertex group. }\end{abstract}
\dedicatory
{In memory of Kentaro Nagao}

\email{t.bridgeland@sheffield.ac.uk}
\address{School of Mathematics and Statistics, University of Sheffield, Hicks Building, Hounsfield Road, S3 7RH, United Kingdom}

\maketitle

\section{Introduction}

The concept of a scattering diagram has emerged from the work of Kontsevich and Soibelman \cite{ks1}, and Gross and Siebert \cite{gs}, on the Strominger-Yau-Zaslow approach to mirror symmetry.  Scattering diagrams, and the associated combinatorics of broken lines, have  been used  recently by Gross, Hacking, Keel and Kontsevich \cite{ghkk} to settle several important conjectures about cluster algebras. The aim of this paper is to explain some very general  connections between spaces of stability conditions and scattering diagrams. In particular, we explain  the relationship between the scattering diagram associated to an acyclic quiver $Q$, and the wall-and-chamber structure of the  space of stability conditions  on the associated \CY triangulated category. Our longer-term goal is to better understand the geometrical relationship between cluster varieties and spaces of stability conditions. 

It turns out that to give a categorical description of the scattering diagram    one does not need to use triangulated categories. In fact, for the most part, we work in  the abelian category  of representations of a fixed quiver with relations $(Q,I)$, and use King's notion of $\theta$--stability. Our starting point is the observation that the walls in the scattering diagram  correspond precisely to the values of the weight vector $\theta$ for which there exist nonzero $\theta$--semistable representations.

To fully specify the  scattering diagram we must first fix a graded Lie algebra. For the most general and abstract form of our result this is defined using the motivic Hall algebra \cite{bridgeland2,joyce1,js,ks2} of the category of representations of $(Q,I)$. In the Calabi-Yau threefold case, when the relations  arise from a potential,  we can then apply an integration map  to produce a scattering diagram taking values in a much simpler Lie algebra  spanned as a vector space by the dimension vectors of non-zero representations of $Q$. We feel that the story becomes clearer by working first in the Hall algebra and using the \CY assumption only when absolutely necessary.

Much of what we do here builds on earlier work of Reineke \cite{reineke},   Gross and Pandharipande \cite{gp},  Kontsevich and Soibelman \cite{ks1,ks2}, Nagao \cite{nagao}, Keller \cite{keller} and Gross, Hacking, Keel and Kontsevich \cite{ghkk}.
The main novelties in our approach are the introduction of the Hall algebra scattering diagram associated to an arbitrary quivers with relations,  and the link with spaces of stability conditions on triangulated categories explained in Section \ref{stab}.  Although simple,  this second point is likely to be crucial in unravelling the connection between spaces of stability conditions and cluster varieties. 

The rest of the Introduction contains a more detailed description of our results. For simplicity we will only consider here the scattering diagrams associated to quivers with potential taking values in the Lie algebra of functions on a Poisson torus, leaving the more general construction of Hall algebra scattering diagrams to the body of the paper.

\subsection{Scattering diagrams}

We begin by giving a rough idea of the notion of a scattering diagram: for further details the reader is referred to  Section \ref{scatter}.
Fix a finitely-generated free abelian group $N$ and set \[M=\Hom_\Z(N,\Z).\] Let us also fix a $\Z$-basis  $(e_1,\cdots,e_k)$ of $N$ and define
$N^+\subset N$ to be the cone consisting of nonzero elements of $N$ which can be written as  non-negative integral combinations of the basis elements.
Consider a  graded Lie algebra
$\g=\bigoplus_{n\in N^+} \g_n$ and the  pro-nilpotent Lie algebra 
\[\Hat{\g}=\prod_{n\in N^+} \g_n \]
  obtained by completing $\g$ with respect to the grading. There is  an associated pro-unipotent algebraic group $\Hat{G}$
with a bijective exponential map \[\exp\colon \Hat{\g}\to \Hat{G}\] defined formally by the Baker--Campbell--Hausdorff formula.

Given this data, a scattering diagram $\DD$  consists of a collection of codimension one closed subsets \[\dd\subset M_\R=M\tensor_\Z \R\]  known as  walls, together with associated elements $\Phi_\DD(\dd)\in \Hat{G}$. Each wall is a convex cone in the hyperplane $n^\perp\subset M_\R$
defined by some primitive vector $n\in N^+$,
and the corresponding element $\Phi_\DD(\dd)\in \Hat{G}$  is then required to lie in the subgroup \[\exp\big(\bigoplus_{k\geq 0} \g_{kn}\big)\subset \Hat{G}.\]  The support $\supp(\DD)\subset M_\R$ of a scattering diagram $\DD$ is defined to be the union  of its walls.

 A scattering diagram $\DD$ is called consistent if for any sufficiently general path $\gamma\colon [0,1]\to M_\R$, the ordered product
\[\Phi_\DD(\gamma)=\big.\prod \Phi_\DD(\dd_i)^{\pm 1}\in \Hat{G},\]
corresponding to the sequence of walls $\dd_i$ crossed by $\gamma$, depends only on the endpoints of $\gamma$.
Two scattering diagrams $\DD_1$ and $\DD_2$  taking values in the same graded Lie algebra $\g$ are called equivalent if  one has
\[\Phi_{\DD_1}(\gamma)=\Phi_{\DD_2}(\gamma),\]
for any sufficiently general path $\gamma\colon [0,1]\to M_\R$. An important fact proved by Kontsevich and Soibelman \cite{ks1} is that equivalence classes of consistent scattering diagrams are in bijection with elements of the group $\Hat{G}$. We explain the proof of this in Section 3.

\subsection{Stability scattering diagram}
Let $Q$ be a  quiver without vertex loops or oriented 2-cycles. We call such quivers $2$-acyclic. Let $V(Q)$ denote the set of vertices of $Q$. We set $N=\Z^{V(Q)}$ and denote the canonical basis by $(e_i)_{i\in V(Q)}\subset N$.  We write $N^+\subset N$ for the corresponding positive cone as above, and also set $N^{\oplus}=N\cup\{0\}$.

The negative of the skew-symmetrized adjacency matrix of $Q$ defines a skew-symmetric form
\[\<-,-\>\colon N\times N\to \Z.\]
The monoid algebra $\C[N^\oplus]$ then becomes a Poisson algebra with bracket
\[\{x^{n_1},x^{n_2}\}=\<n_1,n_2\> \cdot x^{n_1+n_2}.\]
We define the Lie algebra $\g$ to be the subspace $\C[N^+]$ equipped with this Poisson  bracket.

Suppose now that our quiver $Q$ is equipped with a finite  potential $W\in \C Q$. By definition $W$ is a finite linear combination of cycles in $Q$.    Let $J(Q,W)$  be the  corresponding Jacobi algebra, and let \[\A=\mod J(Q,W)\]
be its category of finite-dimensional  left modules. 
Any  object $E\in \A$ has an associated dimension vector $d(E)\in N^\oplus$. 
Given a vector $\theta\in M_\R$ we always write $\theta(E)$ in place of the more cumbersome $\theta(d(E))$.
An object  $E\in \A$ is said to be $\theta$-semistable if
\begin{itemize}
\item[(i)] $\theta(E)=0$,
\item[(ii)]  every subobject $A\subset E$ satisfies $\theta(A)\leq 0$.
\end{itemize} 

King \cite{king} proved that there is a  quasi-projective moduli scheme $M(d,\theta)$ parameterizing $\theta$--stable representations of $J(Q,W)$ of dimension vector $d$.
Joyce \cite{joyce4} showed how to define associated rational numbers $J(d,\theta)\in \Q$ which we call Joyce invariants. We will explain how to define these invariants in Section 11: they are uniquely determined by their wall-crossing properties, together with the formula
\[J(d,\theta)=e(M(d,\theta))\in \Z,\]
which holds when $d\in N^\oplus$ is primitive and $\theta\in d^\perp$ is general. Here   $e(X)$ denotes the Euler number of a complex variety $X$ equipped with the classical topology.

\begin{thm}
\label{main}
Let $Q$ be a 2-acyclic quiver with finite potential $W$. There
 is a  consistent scattering diagram $\DD$ taking values in the Lie algebra $\g$ such that \begin{itemize}
\item[(a)]
the support 
of $\DD$  consists of those maps $\theta\in M_\R$ for which there exist nonzero $\theta$-semistable objects in $\A$,
\item[(b)]  the wall-crossing automorphism at a general point of the support of $\DD$ is
\[\Phi_\DD(\dd)=\exp\bigg(\sum_{d\in\theta^\perp} J(d,\theta) \cdot x^d\bigg)\in \Hat{G}.\]
\end{itemize}
\end{thm}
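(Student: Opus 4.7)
The plan is to split the construction into two stages: first build a universal scattering diagram $\DD^{\Hall}$ valued in the motivic Hall algebra of $\A$, then push it down to $\g$ via the integration map available in the CY3 setting.

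For the first stage, for each primitive $n \in N^+$ I would take the candidate wall to be the closure of the locus of $\theta \in n^\perp$ admitting a nonzero $\theta$-semistable $E \in \A$ with $d(E) \in \Z_{>0}\cdot n$. To each such wall I would attach the Hall algebra element obtained by summing the characteristic elements of the stacks of $\theta$-semistable representations with dimension vectors in $\Z_{>0}\cdot n$ and then exponentiating into the associated pro-unipotent group; local constancy of $\theta$-stability on the relative interior of the wall makes the attached element well-defined. Consistency of $\DD^{\Hall}$ I would derive from the Kontsevich--Soibelman existence/uniqueness theorem reproved in Section 3: a consistent diagram is determined up to equivalence by its incoming walls on the basis hyperplanes $e_i^\perp$, and the obvious incoming data here are the simple modules $S_i$. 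Path-independence of the ordered product is then the Hall algebra incarnation of the Harder--Narasimhan factorization of $\one_\A$ as the ordered product over decreasing $\theta$-phases of the semistable characteristic elements, an identity due to Reineke, combined with the compatibility of Harder--Narasimhan filtrations under small perturbations of $\theta$.

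For the second stage, the CY3 hypothesis enters essentially. Joyce's no-pole theorem guarantees that $(\L - 1)\log \Phi_{\DD^{\Hall}}(\dd)$ extends over $\L = 1$ to a well-defined element of a semiclassical Hall Lie algebra, and the integration map $I$ is a Lie algebra homomorphism from this semiclassical algebra to $\g = \C[N^+]$ sending $\L = 1$ specializations of semistable characteristic elements to the monomials $J(d,\theta)\cdot x^d$. Applying $I$ wall by wall defines $\DD$. Property (b) then holds because $I$ is designed precisely to package the Joyce invariants into $\sum J(d,\theta)\, x^d$; property (a) holds because a wall of $\DD^{\Hall}$ is nontrivial if and only if there exist nonzero $\theta$-semistable objects, and this nontriviality is preserved by $I$ by the characterization of $J(d,\theta)$ through wall-crossing from the primitive case.

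The principal obstacle is establishing that the integration map is a well-defined Lie algebra homomorphism in the required generality. This rests on Joyce's regularity theorem for semistable characteristic elements, on Behrend's description of Donaldson--Thomas invariants via a microlocal function, and on the CY3 structure of $\mod J(Q,W)$ coming from the potential; it is precisely why $W$ cannot be dispensed with at this point. A more routine check is that no spurious walls are introduced by $I$: if there are no nonzero $\theta$-semistable objects with class in $\Z_{>0}\cdot n$ then the Hall algebra element is already trivial, so its image in $\g$ is also trivial, matching the support statement (a) exactly.
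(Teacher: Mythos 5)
Your overall strategy is the paper's: first the Hall-algebra-valued diagram (Theorem \ref{hallsc}), then push-forward by the integration map of Theorem \ref{mine}, with Joyce's no-pole theorem (Theorem \ref{biggy}) guaranteeing that the wall elements $1_\ss(\theta)$ lie in the regular subalgebra on which $I$ is defined. Two corrections to your second stage. First, the integration map relevant here is the unsigned Euler-characteristic one (the case of the constant constructible function); no Behrend function enters, and the invariants $J(d,\theta)$ are plain Euler-characteristic invariants --- the CY$_3$ hypothesis is used only to show that the antisymmetrized difference of $\hom-\operatorname{ext}^1$ in the two directions equals $\langle [E],[F]\rangle$, which is what makes $I$ a Poisson/Lie homomorphism. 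Second, your claim that ``nontriviality of walls is preserved by $I$'' is both unnecessary and not obviously true (Euler characteristics can cancel): part (a) refers to the support of the underlying cone complex, which is unchanged when a $\g_\Hall$-valued diagram is pushed forward along $I$, so Theorem \ref{hallsc}(a) transfers verbatim; only the essential support could shrink, and the statement does not require it not to.

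The genuine gap is in your consistency argument for $\DD^{\Hall}$. The theorem reproved in Section \ref{reconstruct} is the bijection between equivalence classes of consistent diagrams and elements of the group $\Hat{G}$ (Proposition \ref{ks2}); it is \emph{not} the statement that a consistent diagram is determined by its incoming walls. That incoming-wall statement is only quoted, not proved, and only for the Lie algebra $\C[N^+]$; worse, your assertion that the incoming data of this diagram are just the simples $S_i$ on the hyperplanes $e_i^\perp$ is exactly the ``genteel'' property of Section \ref{genteel}, which the paper establishes only for acyclic quivers (Theorem \ref{same}) and explicitly leaves open in general, whereas Theorem \ref{main} concerns arbitrary $(Q,W)$. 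And even granting such a uniqueness theorem, it would produce \emph{some} consistent diagram with those incoming walls; it would not show that the diagram you assembled by hand is consistent. The paper avoids all of this by running the reconstruction the other way: the single element $1_\A\in\Hat{G}_\Hall$ determines, by Section \ref{reconstruct}, a consistent diagram whose wall-crossing automorphism at a general point $\theta$ is the middle factor $\Pi_0^\theta(1_\A)$ of the decomposition $g=g_+\cdot g_0\cdot g_-$, and the Harder--Narasimhan identity $1_\A=1_{(\halft,1)}(\theta)*1_{\ss}(\theta)*1_{(0,\halft)}(\theta)$ of Proposition \ref{inty} (the Reineke-type identity you invoke) identifies this factor with $1_{\ss}(\theta)$; consistency is then automatic and no incoming-wall analysis is needed. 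Finally, if you do build the diagram by hand, one wall per primitive $n$ (the closure of the entire semistable locus in $n^\perp$) is too coarse: in rank $\geq 3$ different cones inside the same hyperplane carry different semistable stacks, so stability is not locally constant on the relative interior of such a ``wall''; one must subdivide into the cones of Lemma \ref{clang}, on whose relative interiors the attached element really is constant.
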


We refer to the scattering diagram of Theorem \ref{main} as the \emph{stability scattering diagram} for the pair $(Q,W)$. It is unique up to equivalence.

\begin{figure}{\center
\begin{tikzpicture}
\draw (0,0) --(2,0);
\node at (2,0.3){$\Phi_{(0,1)}$};
\node at (-2,0.3){$\Phi_{(0,1)}$};
\node at (0.6,1.7){$\Phi_{(1,0)}$};
\node at (0.6,-1.8){$\Phi_{(1,0)}$};
\node at (1.7,-1.1){$\Phi_{(1,1)}$};
\draw (0,0) --(0,2);
\draw (0,0) --(-2,0);
\draw (0,0) --(0,-2);
\draw (0,0) --(2,-2);
\draw[->][thick] ([shift=(75:1.5)]0,0) arc (75:105:1.5);
\draw[->][thick] ([shift=(285:1.5)]0,0) arc (285:255:1.5);
\draw[->][thick] ([shift=(330:1.5)]0,0) arc (330:300:1.5);
\draw[->][thick] ([shift=(165:1.5)]0,0) arc (165:195:1.5);
\draw[->][thick] ([shift=(15:1.5)]0,0) arc (15:-15:1.5);
\end{tikzpicture}
\caption{The scattering diagram for the $A_2$ quiver: there are three  walls corresponding  to the three indecomposable representations.}
\label{figone}}

\bigskip

{\center\begin{tikzpicture}
\node at (0.6,1.7){$\Phi_{(1,0)}$};
\node at (-2,0.3){$\Phi_{(0,1)}$};
\draw[->][thick] ([shift=(75:1.5)]0,0) arc (75:105:1.5);
\draw[->][thick] ([shift=(165:1.5)]0,0) arc (165:195:1.5);
\draw (0,0) --(2,0);
\draw (0,0) --(0,2);
\draw (0,0) --(-2,0);
\draw (0,0) --(0,-2);
\draw[thick] (0,0) --(2,-2);
\draw (0,0) --(2,-1);
\draw (0,0) --(1,-2);
\draw (0,0) --(1.33,-2);
\draw (0,0) --(2,-1.33);

\draw (0,0) --(1.5,-2);
\draw (0,0) --(2,-1.5);
\draw (0,0) --(1.6,-2);
\draw (0,0) --(2,-1.6);

\draw (0,0) --(1.67,-2);
\draw (0,0) --(2,-1.67);

\node at (1.75,-2){.};
\node at (1.85,-2){.};
\node at (2,-1.75){.};
\node at (2,-1.85){.};

\end{tikzpicture}
\caption{The scattering diagram for the Kronecker quiver: the walls correspond to the dimension vectors of the indecomposable representations.}
\label{figtwo}}

\bigskip{\center 
\begin{tikzpicture}
\node at (0.6,1.7){$\Phi_{(1,0)}$};
\node at (-2,0.3){$\Phi_{(0,1)}$};
\draw[->][thick] ([shift=(75:1.5)]0,0) arc (75:105:1.5);
\draw[->][thick] ([shift=(165:1.5)]0,0) arc (165:195:1.5);
\draw (0,0) --(2,0);
\draw (0,0) --(0,2);
\draw (0,0) --(-2,0);
\draw (0,0) --(0,-2);

\draw (0,0) --(2,-.66);
\draw (0,0) --(.66,-2);

\draw (0,0) --(0.47,-2);
\draw (0,0) --(2,-0.47);

\draw (0,0) --(0.76,-2);
\draw (0,0) --(2,-0.76);
\node at (1.6,-2){.};
\node at (1.7,-2){.};
\node at (2,-1.6){.};
\node at (2,-1.7){.};

\draw [black, fill=black] (0,0) -- (0.76,-2) -- (2,-2)--(2,-0.76) -- (0,0);
\end{tikzpicture}
\caption{The scattering diagram for a generalized Kronecker quiver with 3 arrows: there is a region in which the walls are  dense.}
\label{figthree}}

\end{figure}

\subsection{Examples: Kronecker quivers}

As an illustration let us consider the case of a generalized Kronecker quiver $Q$ with two vertices $V(Q)=\{1,2\}$ and  $p\geq 1$ arrows, all going from vertex $1$ to vertex $2$.  We necessarily have $W=0$ and the category $\A$ is just the usual category of finite-dimensional representations of $Q$. The resulting stability scattering diagrams are of course well-known (see for example \cite{gp}).

The lattice $N=\Z^{\oplus 2}$ has a basis $e_1,e_2$ indexed by the vertices of $Q$. Taking dimension vectors gives an identification $d\colon K_0(\A)\to N$, under which  the class $[S_i]\in K_0(\A)$ of the  simple representation at the vertex $i$ is mapped to the basis vector $e_i$.
The stability scattering diagram then lives in $M_\R=\R^2$ with co-ordinates $y_i=\theta(S_i)$. For $1\leq p\leq 3$ this scattering diagram is illustrated in Figure $p$.

The Lie algebra $\g$ is the ideal $(x_1,x_2)\subset \C[x_1,x_2]$ equipped with the restriction of the Poisson bracket
\[\<x_1,x_2\>=-k x_1 x_2.\]
Similarly  $\Hat{\g}=(x_1,x_2)\subset \C[[x_1,x_2]]$.
Given a vector $d\in N$, define an  automorphism of $\C[[x_1,x_2]]$ preserving the ideal $\Hat{\g}$ by the formula
\[\Phi_d(x^n)=x^n\cdot (1+x^d)^{\<d,n\>}\]

Consider first the case $p=1$ when $Q$ is the  A$_2$ quiver. This has finite representation type, and the category $\A$ has three indecomposable objects: $S_1$, $S_2$ and an extension
\[0\lra S_2\lra E\lra S_1\lra 0.\]
The scattering diagram has a wall for each of these objects. The object $S_1$ is $\theta$-stable precisely if $y_1=\theta(S_1)=0$: this is the vertical axis on the diagram. Similarly the object $S_2$ is $\theta$-stable on the horizontal axis $y_2=0$. The object $E$ is $\theta$-stable precisely if $\theta(E)=0$ and $\theta(S_2)<0$: this is the wall $y_1+y_2=0$ and $y_2<0$.
The diagram is consistent because of the pentagon identity
\[\Phi_{(0,1)}\circ \Phi_{(1,0)}=\Phi_{(1,0)}\circ \Phi_{(1,1)}\circ \Phi_{(0,1)}.\]

The case $p=2$ is the original Kronecker quiver. This is of tame representation type and has infinitely many indecomposable representations: unique indecomposables of dimension vector $(n,n-1)$ and $(n-1,n)$ for each $n\geq 1$ and a $\mathbb{P}^1$ family of indecomposables for each dimension vector $(n,n)$. Each of these dimension vectors defines a wall of the scattering diagram. The basic relation in this case is
\[\Phi_{(0,1)}\circ \Phi_{(1,0)}=\Phi_{(1,0)}\circ \Phi_{(2,1)}\circ \Phi_{(3,2)}\circ\cdots \Phi_{(1,1)}^{-2}\circ \cdots\circ \Phi_{(2,3)}\circ \Phi_{(1,2)}\circ \Phi_{(0,1)}.\]
Note that the right hand side makes perfect sense as an automorphism of $\C[[x_1,x_2]]$ because for any  $k>0$ only finitely many terms act non-trivially on the quotient $\C[x_1,x_2]/(x_1,x_2)^k$.

For $p\geq 3$ the quiver $Q$ is of wild representation type and the picture becomes much more complicated. In particular, there is a region of the scattering diagram in which the walls are dense.

\subsection{Stability conditions and walls of type II}

The scattering diagram we associate to a quiver with potential is closely related to the wall-and-chamber structure on the space of stability conditions of the corresponding \CY triangulated  category. 

Let $(Q,W)$ be a 2-acyclic quiver with finite potential. For simplicity we shall assume that the Jacobi algebra $J(Q,W)$ is finite-dimensional.
Let $\D$ denote the bounded derived category of the Ginzburg algebra of the pair $(Q,W)$. It is a \CY triangulated category with a distinguished bounded t-structure whose heart
 is  equivalent to the abelian category $\A=\mod J(Q,W)$. 

We let $\Stab(\D)$ denote the space of stability conditions on $\D$ satisfying the support property. It is a complex manifold. The forgetful map
\[\mathcal{Z}\colon \Stab(\D)\to M_\C\]
sending a stability condition $(Z,\P)$ to its central charge $Z\colon N\to \C$ is a local homeomorphism. There is a wall-and-chamber structure (although in general the walls are dense) such that the heart of the stability condition is constant in a given chamber. These walls are usually known as walls of type II to distinguish them from walls  where objects of a given fixed class can become stable or unstable.

We consider the open subset $\CC(\A)\subset \Stab(\D)$ of nearby stability conditions: by definition these are stability conditions for which all Harder-Narasimhan factors of objects of $\A$ have phases in the open interval $(-1,1)$.

\begin{thm}
The  map $\Im \mathcal{Z}\colon \CC(\A)\to M_\R$ is surjective. Moreover
\begin{itemize}
\item[(a)]
stability conditions in the same fibre of $\Im \mathcal{Z}$ have the same heart; 
\item[(c)] the support of the stability scattering diagram is precisely the image of the union of all type II walls in $\CC(\A)$ under the map $\Im \mathcal{Z}$.\end{itemize}
\end{thm}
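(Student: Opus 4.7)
The proof splits into three parts: surjectivity of $\Im\mathcal{Z}$, the fibre-constancy of the heart asserted in~(a), and the identification of the type II wall locus with $\supp(\DD)$ in~(c). My strategy exploits the standard description of $\CC(\A)$ as a disjoint union of open \emph{heart chambers} indexed by torsion pairs $(\T,\F)$ on $\A$ that arise from some stability condition: each such chamber is the set of stability conditions whose heart is the tilt $\A_\sigma=\langle\F[1],\T\rangle$, and within it the central charge ranges over the open subset of $M_\C$ on which $Z$ restricts to a stability function on $\A_\sigma$. Finite dimensionality of $J(Q,W)$ makes $\A$ finite length, so the HN property on every tilted heart is automatic.

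For surjectivity, given $\theta\in M_\R$ I pick the torsion pair by declaring $S_i\in\T\iff\theta(S_i)>0$ and $S_i\in\F\iff\theta(S_i)\le 0$, extending to all of $\A$ via the intrinsic characterisation below, and choose $\Re Z$ (for instance $\Re Z(S_i)<0$ for $S_i\in\T$ and $\Re Z(S_i)>0$ for $S_i\in\F$) so that $Z=\Re Z+i\theta$ restricts to a stability function on the tilted heart; the resulting $\sigma$ lies in $\CC(\A)$ with $\Im\mathcal{Z}(\sigma)=\theta$. For part~(a), the stability-function constraints give $Z(T)\in\H\cup\R_{<0}$ for $T\in\T$ and $Z(F)\in-\H\cup\R_{>0}$ for $F\in\F$; the openness of the interval $(-1,1)$ in the definition of $\CC(\A)$ excludes phase $\pm 1$, hence $Z\in\R_{<0}$, collapsing these to $\theta(T)>0$ for every nonzero $T\in\T$ and $\theta(F)\le 0$ for every $F\in\F$. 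Tracing the torsion-pair axioms then yields intrinsic descriptions $\T=\{E\in\A:\theta(E')>0\text{ for every nonzero $\A$-quotient $E'$ of $E$}\}$ and $\F=\{E\in\A:\theta(E'')\le 0\text{ for every nonzero $\A$-subobject $E''$ of $E$}\}$, so the torsion pair, and hence the heart, depends only on $\theta$.

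For part~(c), combine~(a) with Theorem~\ref{main}(a), which identifies $\supp(\DD)$ with the locus of $\theta\in M_\R$ admitting nonzero $\theta$-semistable objects of $\A$. Fix $\sigma\in\CC(\A)$ lying over $\theta$ with torsion pair $(\T,\F)$. I claim that for $E\in\A$ with $\theta(E)=0$, $E$ is King $\theta$-semistable if and only if $E\in\F$, and in that case $E\in\P(0)$ automatically. The first assertion holds because $\theta(F)=\Im Z(F)\le 0$ for every $F\in\F$, so all $\A$-subobjects of $E\in\F$ (themselves in $\F$) satisfy King's inequality; conversely the torsion part $T\subseteq E$ would otherwise lie in $\T$ with $\theta(T)>0$, violating King's inequality unless $T=0$. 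The second uses that $E\in\F$ with $\theta(E)=0$ forces $Z(E)\in\R_{>0}$ (since $Z(\F)\subseteq-\H\cup\R_{>0}$), and the HN filtration of $E$ inside $\A_\sigma[-1]=\P((-1,0])$ must then consist entirely of phase-$0$ factors. Consequently the type II walls in $\CC(\A)$ project under $\Im\mathcal{Z}$ exactly onto $\supp(\DD)$.

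The main obstacle I anticipate is the passage between King $\theta$-semistability in $\A$ and $\sigma$-semistability of phase $0$ in $\D$: this hinges on comparing $\A$-subobjects of $E$ with subobjects in the tilted heart $\A_\sigma[-1]$, and relies essentially on the finite length of $\A$ so that the stability-function bound $Z(\F)\subseteq-\H\cup\R_{>0}$ extends from simples to arbitrary objects of $\F$, as well as on the closure of $\F$ under $\A$-subobjects.
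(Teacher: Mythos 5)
Your part (a) is essentially the paper's argument (bound $\theta>0$ on $\T=\A\cap\P(0,1)$ and $\theta\le 0$ on $\F=\A\cap\P(-1,0]$, then use closure under quotients/subobjects to identify the torsion pair with $(\T(\theta),\F(\theta))$), and your direction ``$\theta$-semistable $\Rightarrow$ lies in $\P(0)$'' is also the paper's. But the surjectivity argument has a genuine gap. You build $\sigma$ directly on the tilted heart $\A(\theta)=\<\F(\theta)[1],\T(\theta)\>$ and assert that finite length of $\A$ makes the HN property on every tilted heart ``automatic''. That is not a valid principle: a tilt of a finite-length heart need not be finite length (or even Noetherian) --- precisely at points of the wall system $\W$ the heart $\A(\theta)$ typically is not --- so both the HN property and the support property for your chosen $Z$ require a real argument. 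Moreover your suggested choice of $\Re Z$ (negative on simples in $\T$, positive on simples in $\F$) need not produce a stability function on $\A(\theta)$: what is actually needed is $\Re Z(F)>0$ for every nonzero $F\in\F(\theta)$ with $\theta(F)=0$, and such $F$ can have composition factors from both sign classes (e.g.\ the extension $0\to S_2\to E\to S_1\to 0$ in the $A_2$ quiver with $\theta(S_1)=-\theta(S_2)>0$), so the sign prescription on simples does not control $\Re Z(F)$. You also never verify $\A\subset\P(-1,1)$ for the constructed $\sigma$, which needs the small observation $\Hom_\D(F[1],T)=0$ to rule out phase-$1$ HN factors of objects of $\T(\theta)$. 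The paper avoids all of this by a different route: it constructs a stability condition with heart $\A$ itself (finite length, so existence and the support property are immediate) and central charge $(\delta-\theta)+i\delta$, and then applies the lift of $\mat{1}{0}{-1}{1}\in\GL^+(2,\R)$ in the $\grp$-action to arrange $\Im Z=\theta$ while staying in $\CC(\A)$.

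In part (c) you prove only one inclusion. Your claim concerns objects $E$ already in $\A$: King $\theta$-semistable iff $E\in\F$ with $\theta(E)=0$, and then $E\in\P(0)$; combined with surjectivity this gives $\supp(\DD)\subseteq \Im\mathcal{Z}(\text{type II walls})$. The word ``precisely'' also requires the converse: if $\sigma\in\CC(\A)$ over $\theta$ lies on a type II wall, i.e.\ some $0\neq E\in\P(0)$, you must produce a nonzero King $\theta$-semistable object of $\A$; the needed statement is $\P(0)\subseteq\F(\theta)\subseteq\A$. This is exactly what the paper's short-exact-sequence argument supplies: for $E\in\P(0)$ one has $E[1]\in\P(0,1]=\<\F[1],\T\>$, its torsion sequence $0\to X[1]\to E[1]\to Y\to 0$ has $Y\in\T\subset\P(0,1)$, and $\Hom_\D(E[1],Y)=0$ since $E[1]\in\P(1)$, so $Y=0$ and $E=X\in\F(\theta)$ with $\theta(E)=\Im Z(E)=0$, hence $\theta$-semistable. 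Nothing in your write-up treats objects of $\P(0)$ that are not assumed in advance to lie in $\A$, so ``exactly onto'' is not established as it stands.
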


Thus in each connected component of the complement of the closure of the support of the stability scattering diagram there is a well-defined  heart in $\D$. This heart is easily seen to be of finite-length, and the  classes of its simple objects form a basis of  $N=K_0(\D)$. Viewed in terms of the original basis $(e_i)$, these are precisely the $c$-vectors of cluster theory. As pointed out by Nagao \cite{nagao}, from this point of view the sign-coherence condition of c-vectors is immediate.

\subsection{Framed quiver moduli }
\label{fr}

Instead of using the Joyce invariants $J(d,\theta)$ we can also describe the stability scattering diagram using  Euler numbers of moduli spaces of framed quiver representations. These spaces are  generalizations of those studied by Engel and Reineke \cite{er}.

Fix a class $m\in M^+$ and form a new quiver $Q^\star$  extending $Q$, by adjoining a new vertex $\star$  and adding $m(e_i)$ arrows from vertex $\star$ to vertex $i$. The potential $W$ induces a potential  on $Q^\star$ in the obvious way. We let \[N^\star=\Z^{V(Q^\star)}=\Z^{V(Q)}\oplus \Z=N\oplus \Z, \quad M^\star=\Hom_\Z(N^\star,\Z)=M\oplus\Z.\]
 Given a dimension vector $d\in N$, we define $d^\star=(d,1)\in N^\star$. Given $\theta\in M_\R$ and $d\in N$ there is a unique lift $\theta^\star\in M^\star_\R$ such that $\theta^\star(d^\star)=0$.

There is a coarse moduli scheme $M^\ss(d^\star,\theta^\star)$ for $\theta^\star$-semistable representations of the Jacobi algebra $J(Q^\star, W^\star)$. This moduli scheme is fine providing that $\theta$ does not lie on one of the finitely many hyperplanes $n^\perp$ for dimension vectors $n\in N^+$ of smaller total dimension than $d$. For arbitrary $\theta\in M_\R$ we set
\[F(d,m,\theta):=M^\ss(d^\star,(\theta-\epsilon\delta)^\star)\quad 0<\epsilon\ll 1.\]
When  $W=0$ and $\theta(d)=0$ this moduli space   coincides with the smooth quiver moduli space of  \cite{er}. We denote its Euler characteristic by $K(d,m,\theta)$.

\begin{thm}
\label{aut}
Let $(Q,W)$ be a 2-acyclic quiver with finite potential and $\DD$ the corresponding stability scattering diagram. Then
the adjoint action of   $\Phi_\DD(\dd)\in G$  at a general point of the support of $\DD$ is
\[x^n\mapsto x^n \cdot \prod_{i\in V(Q)}\bigg(\sum_{d\in \theta^{\perp}} K(d,e_i^*,\theta) \cdot x^d\bigg)^{\<e_i,n\>},\]
where $(e_i^*)_{i\in V(Q)}\subset M$  denotes the dual basis to $(e_i)_{i\in V(Q)}\subset N$.
\end{thm}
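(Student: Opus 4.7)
The plan is to derive Theorem \ref{aut} from Theorem \ref{main} in two steps: first, compute explicitly the adjoint action of $\Phi_\DD(\dd)$ coming from the Joyce-invariant formula; then, identify the resulting power series with the framed-moduli generating function in the spirit of Engel--Reineke.

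First I would note that at a general point of the support of $\DD$ the wall $\dd$ lies in a unique hyperplane $n_0^\perp\subset M_\R$ for some primitive $n_0\in N^+$, and only $d\in\R_{>0}\cdot n_0$ contribute Joyce invariants. Writing $H=\sum_{k\geq 1} J(kn_0,\theta)\,x^{kn_0}$ and $u=x^{n_0}$, the identity $\{x^{kn_0},x^{ln_0}\}=0$ implies by induction that $uH'(u)$ is $\mathrm{ad}_H$-invariant, and that $\mathrm{ad}_H^{k}(x^n)=\langle n_0,n\rangle^{k}\bigl(uH'(u)\bigr)^k x^n$ (with multiplication in the commutative Poisson algebra). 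Summing the exponential series gives
\[
\mathrm{Ad}\,\Phi_\DD(\dd)(x^n)\,=\,x^n\cdot f(x^{n_0})^{\langle n_0,n\rangle},\qquad f(u):=\exp\Bigl(\sum_{k\geq 1} k\,J(kn_0,\theta)\,u^k\Bigr).
\]
The decomposition $\langle n_0,n\rangle=\sum_{i\in V(Q)}(n_0)_i\langle e_i,n\rangle$ with $(n_0)_i=e_i^*(n_0)$ then rewrites this as
\[
\mathrm{Ad}\,\Phi_\DD(\dd)(x^n)\,=\,x^n\prod_{i\in V(Q)}\bigl(f(x^{n_0})^{(n_0)_i}\bigr)^{\langle e_i,n\rangle}.
\]

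Second, the theorem now reduces to the generating-function identity
\[
\sum_{d\in\theta^\perp} K(d,e_i^*,\theta)\,x^d\,=\,f(x^{n_0})^{(n_0)_i}\qquad\text{for each } i\in V(Q),
\]
whose left-hand side is supported on nonnegative multiples of $n_0$ at a general point of $\dd$. I would prove this identity by applying Theorem \ref{main} to the framed quiver $(Q^\star,W^\star)$ with framing vector $m=e_i^*$. Since $F(d,e_i^*,\theta)=M^{\ss}(d^\star,(\theta-\epsilon\delta)^\star)$ is a fine moduli space for $\epsilon$ small, its Euler characteristic is computed via the stability scattering diagram for $(Q^\star,W^\star)$. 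A direct Hall-algebra argument in the spirit of Engel--Reineke \cite{er} then identifies $\sum_d K(d,e_i^*,\theta)\,x^d$ with the $(n_0)_i$-th power of the unframed factor $f$ attached to $\dd$, completing the proof.

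The main obstacle lies in carrying out this second step rigorously: one must analyze the lifted scattering diagram for $(Q^\star,W^\star)$ near $(\theta-\epsilon\delta)^\star$ to match the framed Euler characteristics $K(d,e_i^*,\theta)$ with the coefficients in the claimed factorization. Extending the Engel--Reineke identity from acyclic quivers without relations to arbitrary 2-acyclic quivers with potential is the main technical content; it requires tracking the relevant motivic Hall-algebra identities and showing that the contribution of all framed walls lying above $\dd$ assembles into the $(n_0)_i$-th power of the unframed generating function $f$.
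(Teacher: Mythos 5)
Your first step is correct: at a general point of the wall only classes proportional to a primitive $n_0$ contribute, the graded pieces $\g_{kn_0}$ Poisson-commute, and the computation $\exp\{H,-\}(x^n)=x^n\cdot f(x^{n_0})^{\<n_0,n\>}$ with $f(u)=\exp\big(\sum_k k\,J(kn_0,\theta)u^k\big)$ is fine, as is the rewriting using $\<n_0,n\>=\sum_i e_i^*(n_0)\<e_i,n\>$. But at that point you have only reformulated the theorem: given Theorem \ref{main}, the statement of Theorem \ref{aut} is exactly equivalent to the identity $\sum_{d\in\theta^\perp}K(d,e_i^*,\theta)\,x^d=f(x^{n_0})^{e_i^*(n_0)}$ that you postpone to ``step two,'' and you explicitly leave that step as the main obstacle. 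So the proposal has a genuine gap precisely where the mathematical content lies: nothing in the proposal actually relates the framed Euler characteristics $K(d,m,\theta)$ to the Joyce invariants $J(d,\theta)$, and the suggested route (apply Theorem \ref{main} to $(Q^\star,W^\star)$ and invoke an Engel--Reineke-type argument) is not carried out and is not routine --- the framed moduli occurring here are moduli of objects with a fixed one-dimensional framing vertex, not general wall-crossing data for $Q^\star$, and the quoted Engel--Reineke results concern acyclic quivers without relations.

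The paper avoids any direct comparison of $K$ with $J$. It works in the motivic Hall algebra: the stack of framed representations is a vector bundle of rank $m(d)$ over $\M_d$ (Lemma \ref{8a}), and the torsion-pair/semistable factorizations of framed objects give the identities $1^P_\F(\theta)=F(m,\theta)*1_\F(\theta)$ and $1^P_{\ss}(\theta)=F_{\ss}(m,\theta)*1_{\ss}(\theta)$ (Lemmas \ref{cloud}, \ref{clouds}). Combined with the twisted commutation rule $a_d*z^m=t^{2m(d)}z^m*a_d$ this says that conjugation by $1_{\ss}(\theta)$ sends $z^m$ to $z^m$ times the class of the framed semistable moduli (Theorem \ref{mool2}); applying the integration map of Theorem \ref{mine} gives $z^m\mapsto z^m\cdot\sum_{\theta(d)=0}K(d,m,\theta)x^d$. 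Finally, since $x^n\prod_i z_i^{\<n,e_i\>}$ is invariant under $\Hat{G}$, the action on the $z_i=z^{e_i^*}$ determines the adjoint action on $x^n$, which yields exactly the product formula of Theorem \ref{aut}. If you want to salvage your outline, the missing second step is supplied by precisely these framed Hall-algebra lemmas (and then your step one, run backwards, would instead prove the identity between the $K$'s and the $J$'s rather than assume it).
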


In the case when $Q$ is acyclic (and hence $W=0$) we can use Theorem \ref{same} below to identify the stability scattering diagram with a purely combinatorial object called the cluster scattering diagram.
Theorem \ref{aut} then essentially coincides with   Reineke's result  \cite[Theorem 2.1]{reineke} as stated in \cite[Proposition 8.28]{ghkk}.

\subsection{Theta functions}

The theta functions defined by a scattering diagram are of crucial importance in the applications  to cluster varieties described in \cite{ghkk}. 
They are defined abstractly in terms of the wall-crossing automorphisms of the scattering diagram, but  can also be interpreted in terms  of counts of combinatorial objects called broken lines.

In general, theta functions are indexed by a lattice element $m\in M$. In this paper we only consider the functions $\vartheta^m(-)$ for elements lying in the positive cone
\[M^+=\{m\in M:m(n)>0\text{ for all }n\in N^+\}.\] 
To define these functions, consider the commutative Poisson algebra 
 \[B=\C[N^\oplus]\tensor_\C \C[M]=\C[x_1,\cdots,x_n][z_1^{\pm 1}, \cdots, z_n^{\pm 1}],\]
\[\{x^{n_1},x^{n_2}\}=\<n_1,n_2\>\cdot x^{n_1+n_2}, \quad \{x^n,z^m\}=m(n)\cdot x^{n}  z^m, \quad \{z^{m_1},z^{m_2}\}=0.\]
The Lie algebra $\g=\C[N]$ is a Lie subalgebra of $B$ so acts on it by derivations. Completing everything with respect to the $N^\oplus$-grading and exponentiating, we get an action of the group $\Hat{G}$ by algebra automorphisms of
\[\Hat{B}=\C[[x_1,\cdots,x_n]][z_1^{\pm 1},\cdots z_n^{\pm 1}].\]

Given a consistent scattering diagram $\DD$ as above we can  define for each $m\in M^+$  a theta function
\[\vartheta^m\colon M_\R\setminus \supp(\DD) \to \Hat{B}, \qquad \vartheta^m(\theta)=\Phi_\DD(\gamma)(z^m),\]
where $\gamma\colon [0,1]\to M_\R$ is any sufficiently general path from an arbitrary point $\theta_+\in M^+_\R$ to the point $\theta$.
The following result is closely related to Nagao's proof of the Caldero-Chapoton formula, although we do not explain the precise link here. More on this will appear in Man-Wai Cheung's thesis \cite{mandy}. 

\begin{thm}
\label{tor}
Let $(Q,W)$ be a 2-acyclic quiver with finite potential and $\DD$ the corresponding stability scattering diagram. Then for $m\in M^+$ and $\theta\in M_\R\setminus\supp(\D)$ there is an identity
\[\vartheta^{m}(\theta)=z^{m}\cdot \sum_{d\in N^\oplus } K(d,m,\theta) \cdot x^d. \]
\end{thm}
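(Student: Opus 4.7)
The plan is to verify the identity by establishing a base case and a wall-crossing compatibility, since both sides are locally constant functions on $M_\R\setminus\supp(\DD)$. For the base case, take $\theta_+$ in the positive chamber $M^+_\R$, which contains no walls of $\DD$. Then $\vartheta^m(\theta_+)=z^m$ by definition of the theta function. On the right-hand side, any framed representation of dimension $(d,1)$ with $d\neq 0$ contains $(d,0)$ as a proper subrepresentation, and $(\theta_+-\epsilon\delta)(d)>0$ for sufficiently small $\epsilon>0$ violates the semistability inequality applied to this subrepresentation; hence $K(0,m,\theta_+)=1$ and $K(d,m,\theta_+)=0$ for $d\neq 0$, so the RHS also equals $z^m$.

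For the inductive step, fix a wall $\dd\subset n^\perp$ of $\DD$ with $n\in N^+$ primitive, and nearby points $\theta_\pm$ on opposite sides. The LHS transforms as $\vartheta^m(\theta_+)=\Phi_\DD(\dd)(\vartheta^m(\theta_-))$ by definition. Since $\Phi_\DD(\dd)=\exp\!\bigl(\sum_{k\geq 1}J(kn,\theta)\,x^{kn}\bigr)$ is supported along the ray $\R n$, and the Poisson brackets $\{x^{kn},x^{k'n}\}$ all vanish, a short calculation using $\{x^d,z^m\}=m(d)\,x^dz^m$ gives
\[\Phi_\DD(\dd)(z^m)=z^m\cdot\exp\!\bigl(m(n)\cdot h(x)\bigr),\qquad \Phi_\DD(\dd)(x^d)=x^d\cdot\exp\!\bigl(\<n,d\>\cdot h(x)\bigr),\]
where $h(x)=\sum_{k\geq 1}k\,J(kn,\theta)\,x^{kn}$ is a single-variable series in $x^n$. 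Combined with the inductive hypothesis, this produces an explicit formula for the jump of the LHS across $\dd$.

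The parallel wall-crossing formula for the framed Euler characteristics $K(d,m,\theta)$ is obtained by applying Theorems~\ref{main} and \ref{aut} to the extended quiver $(Q^\star,W^\star)$. Its consistent scattering diagram $\DD^\star$ in $M^\star_\R$ has walls of dimension vector $(d,0)$ which pull back from walls of $\DD$, and walls of dimension vector $(d,1)$ whose associated Joyce invariants coincide with the framed Euler characteristics $K(d,m,\theta)$, since the moduli $F(d,m,\theta)$ are fine after the generic perturbation $\epsilon\delta$. Consistency of $\DD^\star$ around a small loop that trades a $(d,0)$-wall crossing for the family of $(d,1)$-wall crossings it generates then yields the required wall-crossing identity for the generating series $\sum_d K(d,m,\theta)\,x^d$.

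The main obstacle is extracting the one-variable exponential factor $\exp(m(n)\,h(x))$ from the combined contribution of all framing walls $(d,1)$ in $\DD^\star$ with $d$ proportional to $n$: this amounts to a Hall-algebra identity relating framed and unframed Joyce invariants supported along the ray $\R n$, parallel to the derivation of Theorem~\ref{aut} from Theorem~\ref{main}, but now for an arbitrary $m\in M^+$ rather than just the basis covectors $e_i^*$.
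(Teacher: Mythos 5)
Your base case and the computation of the jump of the left-hand side are fine: in the positive chamber every framed object of class $(d,1)$ with $d\neq 0$ is destabilised by its $(d,0)$-subobject, and the adjoint action of $\exp\big(\sum_{k\geq 1}J(kn,\theta)x^{kn}\big)$ on $z^m$ and $x^d$ is exactly as you state. But the heart of the theorem is the matching wall-crossing formula for the framed generating series $\sum_d K(d,m,\theta)\,x^d$, and this you do not prove: you yourself flag it as "the main obstacle". Reducing it to "consistency of $\DD^\star$ around a small loop" is not yet an argument. The walls of $\DD^\star$ live in $M^\star_\R$, the relevant lift $\theta^\star=(\theta,-\theta(d))$ depends on $d$, the definition of $F(d,m,\theta)$ involves the perturbation $\theta-\epsilon\delta$, and identifying which $(d,1)$-walls of $\DD^\star$ are traversed, in which order, as $\theta$ crosses a single wall of $\DD$, and showing that their composite effect is precisely $\exp(m(n)h(x))$ together with the twist $x^d\mapsto x^d\exp(\<n,d\>h(x))$, is essentially equivalent to the statement being proved (it is the content of Theorem \ref{aut} for arbitrary $m\in M^+$). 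So as written the proposal has a genuine gap: it reduces the theorem to an unproved identity of comparable depth.

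For comparison, the paper avoids any induction over walls. It proves the identity once and for all at the level of the Hall algebra (Theorem \ref{mool}): by \eqref{haz} one has $\vartheta^m(\theta)=1_\F(\theta)\,z^m\,1_\F(\theta)^{-1}$ in $\Hat{B}$; Lemma \ref{8a} (the stack of $P(m)$-framed objects is a rank $m(d)$ vector bundle over $\M_d$) converts the twisted commutation $a_d * z^m=t^{2m(d)}z^m * a_d$ into $1_\F(\theta)*z^m=z^m*1^P_\F(\theta)$; and Lemma \ref{cloud}, which encodes the unique decomposition of a framed object $\nu\colon P\to E$ with $E\in\F(\theta)$ into a piece with cokernel in $\T(\theta)$ and a torsion-free quotient, gives $1^P_\F(\theta)=F(m,\theta)*1_\F(\theta)$. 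Hence $\vartheta^m(\theta)=z^m\cdot F(m,\theta)$, and Theorem \ref{tor} follows by applying the integration map $I$ of Theorem \ref{mine}. If you want to salvage your route, the missing framed wall-crossing identity is exactly what these two Hall-algebra lemmas supply; alternatively one can carry out the extended-quiver factorization in the style of Reineke and \cite[Proposition 8.28]{ghkk}, but that requires substantially more work than the loop-consistency sketch given here.
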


We leave for future research the problem of finding a similar moduli-theoretic description of the theta function $\vartheta^m(-)$ for general $m\in M$.
%

\subsection{Cluster scattering diagram}

 Let $Q$ be a 2-acyclic quiver  and consider  the Lie algebra $\g=\C[N^+]$ as above. Assume that the form $\<-,-\>$ is non-degenerate. Let $\DD$ be an arbitrary scattering diagram taking values in $\g$. Any wall $\dd$ of $\DD$ is contained in a hyperplace $n^\perp$ for a unique primitive element $n\in N^+$. We say that $\dd$ is incoming  if it contains the vector \[\theta_n=\<-,n\>\in M.\] Kontsevich and Soibelman proved that a consistent scattering diagram taking values in $\g$ is uniquely specified up to equivalence by its set of incoming walls and their associated wall-crossing automorphisms.

Consider the consistent scattering diagram in $\g$ whose only incoming walls are the hyperplanes $\dd_i=e_i^\perp$ with associated elements
\[\Phi_\DD(\dd_i)=\exp\bigg(\sum_{n\geq 1}\frac{x^{ne_i}}{n^2}\bigg) \in \Hat{G}.\]
It is this scattering diagram which plays a key role in \cite{ghkk}. We call it the  the {cluster scattering diagram} of  $Q$,
 since the adjoint action of $\Phi_\DD(\dd_i)$   is  given by the cluster transformation\[x^n\mapsto x^n\cdot (1+x^{e_i})^{\<e_i,n\>}.\]
 

It seems an interesting question to determine for which quivers the cluster scattering diagram can be realised as a stability scattering diagram for some appropriate choice of potential $W$. One result we have along these lines is

 \begin{thm}
\label{same}
If $Q$ is acyclic (and hence $W=0$) then the stability scattering diagram for $(Q,W)$ is equivalent to the   cluster scattering diagram associated to $Q$.
\end{thm}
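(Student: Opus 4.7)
The plan is to apply the Kontsevich--Soibelman uniqueness theorem quoted in the paper: a consistent scattering diagram in $\g$ is uniquely specified up to equivalence by its set of incoming walls and their wall-crossing automorphisms. The cluster scattering diagram is defined so that the only incoming walls are the hyperplanes $e_i^\perp$ with wall-crossing automorphism $\exp\bigl(\sum_{k\geq 1} x^{ke_i}/k^2\bigr)$. It therefore suffices to show that the stability scattering diagram $\DD$ for $(Q,0)$ has exactly the same incoming wall data.

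First I would treat $n=e_i$. Since $Q$ has no loops, $\langle e_i,e_i\rangle=0$, so $\theta_{e_i}(e_i)=0$ and the simple $S_i$ is $\theta_{e_i}$-semistable; hence $e_i^\perp$ is an incoming wall of $\DD$. At a sufficiently generic point $\theta$ on $e_i^\perp$, the only $\theta$-semistable modules whose dimension vector is a multiple of $e_i$ are the direct sums $S_i^{\oplus k}$, with moduli stack $BGL_k$; Joyce's framework applied to these classes yields $J(ke_i,\theta)=1/k^2$, and Theorem \ref{main}(b) then recovers exactly the cluster transformation.

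Next, for primitive $n\in N^+$ with $n\neq e_i$ for all $i$, I would rule out any incoming wall by showing that no nonzero $\theta_n$-semistable module has dimension a positive multiple of $n$. Note that $|\supp(n)|\geq 2$. Split on whether $Q|_{\supp(n)}$ has at least one arrow. If it does, acyclicity lets me choose a sink $j$ of $Q|_{\supp(n)}$ receiving at least one arrow from within $\supp(n)$. For any candidate $\theta_n$-semistable $E$ of dimension $kn$, all outgoing arrows of $j$ in the ambient quiver $Q$ land at vertices $m\notin\supp(n)$ where $E_m=0$, so any one-dimensional subspace of $E_j$ is a subobject isomorphic to $S_j$. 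But then
\[\theta_n(S_j)=\langle e_j,n\rangle=\sum_{m\in\supp(n)} n_m\cdot\#\{m\to j\}>0,\]
contradicting the semistability condition $\theta_n(A)\leq 0$ for subobjects. If instead $Q|_{\supp(n)}$ has no arrows, then any module of dimension $kn$ is semisimple, and using subobjects $S_i^{kn_i}$ one shows that semistability at $\theta\in n^\perp$ forces $\theta(e_i)=0$ for every $i\in\supp(n)$, a codimension $\geq 2$ condition in $M_\R$; thus $n^\perp$ contains no codimension-one stratum of semistable support and cannot be a wall of $\DD$.

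The main obstacle is the first of these two subcases: identifying the sink $j$ of $Q|_{\supp(n)}$ and verifying that $S_j$ really embeds into $E$ through arrows that may leave $\supp(n)$ in the ambient quiver requires the combined use of acyclicity on the support and the vanishing $E_m=0$ off the support. The key numerical input is the strict positivity $\langle e_j,n\rangle>0$, which rests on having at least one arrow into $j$ from within $\supp(n)$. Once this combinatorial step is in place, the Joyce-invariant computation at $e_i^\perp$ and the appeal to Kontsevich--Soibelman uniqueness are standard.
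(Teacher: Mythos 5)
Your proposal is correct in outline, but it takes a genuinely different route from the paper at both key steps. For the walls $e_i^\perp$, the paper does not compute the Joyce invariants $J(ke_i,\theta)=1/k^2$ directly: it instead uses the framed-moduli description of the wall-crossing action (Theorem \ref{mool2} pushed through the integration map), identifies the relevant framed moduli spaces with Grassmannians, and reads off the cluster transformation $z^m\mapsto z^m(1+x^{e_i})^{m(e_i)}$ from their Euler characteristics. Your route requires the identity $I\bigl(\sum_{k\geq 0}[B\GL_k\to\M]\bigr)=\exp\bigl(\sum_{k\geq 1}x^{ke_i}/k^2\bigr)$, which is standard in the Donaldson--Thomas literature but is exactly the computation the paper's Grassmannian argument is designed to avoid; as written you assert it rather than prove it. For the absence of other incoming walls, the paper introduces the notion of a \emph{genteel} pair (only self-stable objects are the simples, Section \ref{genteel}) and proves that acyclic quivers are genteel by ordering the vertices and using Harder--Narasimhan filtrations with respect to an auxiliary stability function; you instead argue directly with $\theta_n$-semistability, producing a destabilizing simple subobject $S_j$ at a sink of $Q|_{\supp(n)}$ (your sign bookkeeping matches the paper's convention $\<e_i,e_j\>=a_{ji}-a_{ij}$), and handling the arrowless-support case by a codimension count. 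This is more elementary and, usefully, rules out \emph{semistable} objects rather than just stable ones, which is closer to what the incoming-wall criterion literally requires; the paper's formulation isolates a reusable criterion (genteelness) meaningful for quivers with potential beyond the acyclic case. Two small points you should make explicit: the passage from ``no $\theta_n$-semistable module has dimension in $\Z_{>0}n$'' to ``no incoming wall contained in $n^\perp$'' needs the closedness of the existence locus of semistables (Lemma \ref{clang}), since a wall through $\theta_n$ carries its automorphism at its general points, not at $\theta_n$ itself; and in the arrowless-support case your codimension-two conclusion rules out any nontrivial wall in $n^\perp$ at all, which also disposes of the degenerate possibility $\theta_n=0$ when $n$ lies in the radical of the form.
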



\subsection*{Acknowledgements.} I am very grateful  to Arend Bayer, Mark Gross, Paul Hacking, Joe Karmazyn, Sean Keel and Alastair King for useful conversations on the subject of this paper, and to Fan Qin for pointing out an error in an earlier version.


 \section{Scattering diagrams}
\label{scatter}
 
 In this section we introduce basic definitions concerning scattering diagrams. We have to use a slightly different framework to that of \cite{ghkk} because the assumption that the group associated to a wall is  abelian  need not hold in the  general context in which we shall be working. Unproved assertions concerning convex rational  polyhedral cones  can be found in \cite[Section 1.2]{fulton}.

\subsection{}
\label{basic}
Let $N\isom \Z^{\oplus n}$ be a free abelian group of finite rank. Set \[M=\Hom_\Z(N,\Z), \quad M_\R=M\tensor_\Z \R.\] Fix a basis $(e_1,\cdots, e_n)$ for $N$ and set
\[N^\oplus=\big\{\sum_{i=1}^n \lambda_i e_i : \lambda_i\in \Z_{\geq 0}\big\}, \quad N^+=N^{\oplus}\setminus\{0\}.\]
Then $N^+$ is closed under addition, and $N^\oplus$ is a monoid.
We also consider the dual cone
\[M^+_\R=\{\theta\in M_\R:\theta(n)>0\text{ for all }n\in N^+\},\]
 and introduce notation
 \[M^+=M^+_\R\cap M, \quad M^\oplus =M^+\cup\{0\}, \quad M^-_\R:= -M_\R^+.\]

\subsection{}
We fix an $N^+$-graded Lie algebra
 \begin{equation}
\label{decomp}\g=\bigoplus_{n\in N^+} \g_n, \qquad [\g_{n_1},\g_{n_2}]\subset \g_{n_1+n_2}.\end{equation}
We shall often  consider the case when the Lie algebra $\g$ is nilpotent. There is then a corresponding algebraic group $G$  
with a bijective map
\begin{equation}
\label{exp}\exp\colon \g\to G.\end{equation}
If we use this map to identify $G$ with $\g$ then the  product  on $G$ is given by the Baker-Campbell-Hausdorff formula.
 
 \subsection{}
 By a \emph{cone}  in $M_\R$ we shall always mean a convex, rational, polyhedral cone, i.e. a subset of the form
 \[\sigma=\big\{\sum_{i=1}^p \lambda_i m_i:\lambda_i\in \R_{\geq 0}\big\}, \quad m_1,\cdots, m_p\in M.\]
 Any such cone has a dual description as an intersection of half-spaces:
 \[\sigma=\big\{\theta\in M_\R:\theta(n_i)\geq 0\text{ for } 1\leq i\leq q\big\}, \quad n_1,\cdots, n_q\in N.\]
 The \emph{codimension} of a cone is the codimension of the subspace of $M_\R$ it spans. We refer to codimension 1 cones as \emph{walls} and denote them by the symbol $\dd$.
 
 A \emph{face} of a cone $\sigma$ is a subset of the form \[\sigma\cap n^\perp=\{\theta\in \sigma:\theta(n)=0\},\]
where $n\in N$ satisfies $\theta(n)\geq 0$ for all $\theta\in \sigma$.
 Any face of a cone is itself a cone. Any intersection of faces of a given cone is also a face.
 
 \subsection{}
 A \emph{cone complex} in $M_\R$ is a finite collection $\S=\{\sigma_i\colon i\in I\}$ of  cones, such that
 \begin{itemize}
 \item[(a)] any face of a cone in $\S$ is also a cone in $\S$,\smallskip
 
 \item[(b)] the intersection of any two cones in $\S$  is a face of each.
 \end{itemize}
 The \emph{support} of a cone complex is the closed subset
 \[\supp(\S)=\bigcup_{\sigma\in \S} \sigma\subset M_\R.\]
  
\subsection{}
 \label{easy}\emph{Example}.
 Fix a finite subset $P\subset N^+$ and consider partitions \[P=P_+\sqcup P_0\sqcup P_-\] into disjoint subsets, with $P_0$ non-empty. To each such partition there is a cone
 \[\sigma(P_+,P_0,P_-)=\big\{\theta\in M_\R: \theta(n)=0\text{ for }n\in P_0\text{ and }\pm \theta(n)\geq 0\text{ for }n\in P_\pm\big\}\subset M_\R.\]
 The set of cones obtained from all such partitions of $P$ is a cone complex $\S(P)$ in $M_\R$.

 \subsection{}
 If $\sigma\subset M_\R$ is a  cone then we define a Lie subalgebra
\[\g(\sigma)=\bigoplus_{n\in  N^+\cap\sigma^{\perp}} \g_n\subset \g,\]
where $\sigma^\perp\subset N$ is the set of elements  orthogonal to all elements of $\sigma$.
Of course $\g(\sigma)$  depends only  on the subspace of $M_\R$ spanned by $\sigma$. 

\begin{defn}A \emph{$\g$-complex} $\DD=(\S,\phi)$ is a  cone complex $\S$  in $M_\R$   equipped with a choice $\phi(\dd)\in \g(\dd)$
for each wall $\dd\in \S$.
\end{defn}

The \emph{essential support} of a $\g$-complex $\DD=(\S,\phi)$ is the subset
\[\esupp(\DD)=\bigcup_{\dd\in\S:\phi(\dd)\neq 0} \dd\subset \supp(\S).\]

\subsection{}
Let $\DD=(\S,\phi)$ be a $\g$-complex.
We say that a smooth path $\gamma\colon [0,1]\to M_\R$ is \emph{$\DD$-generic} if
\begin{itemize}
\item[(a)] the endpoints $\gamma(0)$ and $\gamma(1)$ do not lie in the essential support of $\DD$,\smallskip

\item[(b)] $\gamma$ does not meet any cones of $\S$ of codimension $>1$,\smallskip

\item[(c)] all intersections of $\gamma$ with walls of $\S$ are transversal.
\end{itemize}
It follows  that there is a finite set of points $0<t_1<\cdots< t_k<1$ for which $\gamma(t_i)$ lies in the essential support of $\DD$, and at each of these points $t_i$ there is a unique wall $\dd_i\in \S$ such that $\gamma(t_i)\in \dd_i$.

\subsection{}Assume that the Lie algebra $\g$ is nilpotent. 
 Given a $\g$-complex $\DD=(\S,\phi)$ and a wall $\dd\in \S$ we can then define the element
\[\Phi_\DD(\dd)=\exp \phi(\dd)\in G.\]
Given any $\DD$-generic path $\gamma\colon [0,1]\to M_\R$  we can form the product
\[\Phi_\DD(\gamma)= \Phi_\DD(\dd_k)^{\epsilon_k}\cdot \Phi_\DD(\dd_{k-1})^{\epsilon_{k-1}}\cdots \Phi_\DD(\dd_2)^{\epsilon_2}\cdot \Phi_\DD(\dd_1)^{\epsilon_1}\in G,\]
where $\epsilon_i\in \{\pm 1\}$ is the negative of the sign of the derivative of  $\gamma(t)(n)$ at  $t=t_i$.

\begin{defn}
\begin{itemize}
\item[(a)]
We call two $\g$-complexes $\DD_1$ and $\DD_2$  \emph{equivalent} if, for any path $\gamma\colon [0,1]\to M_\R$ which is both $\DD_1$-generic and $\DD_2$-generic,  we have \[\Phi_{\DD_1}(\gamma)=\Phi_{\DD_2}(\gamma).\]

\item[(b)]
A $\g$-complex $\DD$  is called \emph{consistent}  if for any two $\DD$-generic paths $\gamma_i$ with the same endpoints we have \[\Phi_{\DD}(\gamma_1)=\Phi_{\DD}(\gamma_2).\]
\end{itemize}
\end{defn}



%

\subsection{}
\label{com}
Return now to the case of an arbitrary $N^+$-graded Lie algebra $\g$. For each $k\geq 0$  there is an ideal
\[\g^{> k} = \bigoplus_{\delta(n)> k} \g_n,\]
and a  nilpotent $N^+$-graded Lie algebra $\g_{\leq k}=\g/\g^{> k}$.  We denote the corresponding unipotent algebraic group by $G_{\leq k}$. For $i<j$ there are canonical homomorphisms
\begin{equation}
\label{direct}\pi^{ji}\colon \g_{\leq j}\to \g_{\leq i}, \qquad \pi^{ji}\colon G_{\leq j}\to G_{\leq i}.\end{equation}
We also consider the pro-nilpotent Lie algebra and the corresponding pro-unipotent algebraic group
 \[\Hat{\g}=\lim_{\longleftarrow} \;\g_{\leq k}, \qquad \Hat{G}=\lim_{\longleftarrow}\;G_{\leq k} .\]
Taking the limits of the maps \eqref{exp} gives a bijection $\exp\colon \Hat{\g}\to \Hat{G}$.

\subsection{}
In general, if $f\colon \g\to \h$ is a homomorphism of $N^+$-graded Lie algebras and $\DD=(\S,\phi)$ is a $\g$-complex, then there is an induced $\h$-complex \[f_*(\DD)=(\S,f\circ \phi).\]
Applying this to the maps \eqref{direct} we can make the following definition.

\begin{defn}
A \emph{$\Hat{\g}$-complex} $\DD$ is a sequence of $\g_{\leq k}$-complexes $\D_k$ for $k\geq 1$ such that for any $i<j$ the $\g_{\leq i}$ complexes  $\pi^{ji}_*(\D_j)$ and $\D_i$ are equivalent.
\end{defn}

Let $\DD=(\DD_k)_{k\geq 1}$ be a $\Hat{\g}$-complex.  We  set
\[\supp(\DD)=\bigcup_{k\geq 1} \supp(\DD_k),\quad \esupp(\DD)=\bigcup_{k\geq 1} \esupp(\DD_k).\]
 We say that  $\DD$ is  \emph{consistent} if each  $\g_{\leq k}$-complex $\D_k$ is consistent.  We say that $\DD$ is \emph{equivalent} to  another $\Hat{\g}$-complex $\DD'=(\DD'_k)_{k\geq 1}$ if for all $k\geq 1$ the $\g_{\leq k}$-complexes $\DD_k$ and $\DD'_k$ are equivalent.

\subsection{}Scattering diagrams are defined in \cite{ghkk}   under the assumption that for any $n\in N^+$ the sub-Lie algebra \[\g_{\<n\>}=
\bigoplus_{k\geq 0}\, \g_{kn}\subset \g\] is  abelian. 
In the next section we shall prove that when this assumption holds there is a natural bijection between equivalence classes of scattering diagrams in the sense of \cite{ghkk} and equivalence classes of $\Hat{\g}$-complexes as defined above. In the rest of the paper we shall therefore use the terms $\Hat{\g}$-complex and scattering diagram interchangeably.


\section{Reconstruction result}
\label{reconstruct}
In this section we reproduce Kontsevich and Soibelman's proof that scattering diagrams in $\g$ up to equivalence correspond to elements of the group $G$. 
Although our framework is slightly different to that of \cite{ks2} or \cite{ghkk}, there is nothing  new here, and all non-trivial statements (and their proofs) are due 
to Kontsevich and Soibelman. 

\subsection{}\label{threeone}
Continuing with the notation of the last section, let us suppose for the moment that the Lie algebra $\g$ is nilpotent. Suppose that $\DD=(\S,\phi)$ is a consistent $\g$-complex. If  $\theta_1,\theta_2\in M_\R$ lie outside the essential support of  $\DD$ then there is a  well-defined element
\[\Phi_\DD(\theta_1,\theta_2)\in G\]
obtained as $\Phi_\DD(\gamma)$ for any path $\gamma$ from $\theta_1$ to $\theta_2$.
Note that we always have
\[\esupp(\DD)\cap M_\R^\pm =\emptyset.\]
 Indeed, if $\dd\in \S$ is a wall then  $\phi(\dd)\neq 0$ implies that $\dd\subset n^\perp$ for some $n\in N^+$, but then $ M_\R^\pm\cap\dd=\emptyset$. Thus any $\g$-complex $\DD$ has a well-defined element \[\Phi_{\DD}=\Phi_\DD(\theta_+,\theta_-)\in G\] obtained as $\Phi_\DD(\gamma)$ for  a $\DD$-generic path $\gamma$ from a point $\theta_+\in M^+_\R$ to a point $\theta_-\in M^-_\R$.

\subsection{}
\label{han}
Given $\theta\in M_\R$ we can define Lie subalgebras of $\g$
\[\g_\pm(\theta)=\bigoplus_{n\in N^+:\pm \theta(n)>0}\g_n\quad \text{ and }\quad \g_0(\theta)=\bigoplus_{n\in N^+:\theta(n)=0} \g_n.\]
There is then a  decomposition
\begin{equation}
\label{de}\g=\g_-(\theta)\oplus \g_0(\theta)\oplus \g_+(\theta).\end{equation}
In the case that $\g$ is nilpotent we can consider the corresponding unipotent group $G$, and subgroups \[G_\star(\theta):=\exp\g_\star(\theta)\subset G, \quad \star\in\{+,-,0\}.\]
 The decomposition \eqref{de} implies that every element $g\in G$ has a unique decomposition \begin{equation}
\label{def}g=g_+\cdot g_0\cdot g_- \text{ with } g_\pm\in G_\pm(\theta)\text{ and }g_0\in G_0(\theta).\end{equation}
This defines projection maps (not group homomorphisms) $\Pi_\star^\theta\,\colon G\to G_\star(\theta)$.

\subsection{}
The following result shows that we can reconstruct all the the wall-crossing automorphisms of a  $\g$-complex  from the single element $\Phi_\DD$.

\begin{lemma}
\label{lem}
Assume that $\g$ is nilpotent and let $\DD=(\S,\phi)$ be a $\g$-complex. Then for any wall $\dd\in \S$ and any $\theta$ in the relative interior of $\dd$ we have the relation
\[\Phi_\DD(\dd)=\Pi_0^\theta\,(\Phi_\DD).\]
\end{lemma}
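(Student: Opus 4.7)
The plan is to construct a specific $\DD$-generic path $\gamma$ from a point $\theta_+\in M^+_\R$ to a point $\theta_-\in M^-_\R$ passing through $\theta$, to split it into three consecutive sub-paths whose wall-crossing products lie in $G_-(\theta)$, $G_0(\theta)$ and $G_+(\theta)$ respectively, and then to read off $\Pi_0^\theta(\Phi_\DD)$ from the uniqueness of the decomposition \eqref{def}.

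Since $\g$ is nilpotent, $\S$ contains only finitely many walls, so I first choose an open neighbourhood $U$ of $\theta$ in $M_\R$ small enough that $\dd$ is the only wall of $\S$ meeting $U$: any $\dd'\in\S$ with $\dd'\neq\dd$ meets $\dd$ in a face of strictly lower dimension and so, since $\theta$ lies in the relative interior of $\dd$, satisfies $\theta\notin\dd'$. Let $n\in N^+$ be the primitive vector with $\dd\subset n^\perp$, pick $\alpha,\beta\in U$ very close to $\theta$ with $\alpha(n)>0$ and $\beta(n)<0$, and take $\gamma=\gamma_1*\gamma_0*\gamma_2$ to be a small generic perturbation of the piecewise linear path $\theta_+\to\alpha\to\beta\to\theta_-$; here $\gamma_0$ is the segment inside $U$ and crosses $\dd$, and nothing else, exactly once.

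The heart of the argument is the analysis of the walls crossed by $\gamma_1$. Write $n'=n(\dd')$ for the primitive vector of any wall $\dd'$ met by $\gamma_1$; along the nearly-straight segment the function $t\mapsto \gamma_1(t)(n')$ interpolates between $\theta_+(n')>0$ and $\alpha(n')$, and so can vanish only if $\alpha(n')<0$. Taking $\alpha$ close enough to $\theta$ then forces $\theta(n')<0$: the case $\theta(n')>0$ gives $\alpha(n')>0$ and no crossing, while in the remaining case $\theta(n')=0$ one has $\dd'\neq\dd$ (since $\alpha(n)>0$ rules out $\dd'=\dd$) and $\theta\in n'^\perp\setminus\dd'$, so the crossing point of $\gamma_1$ with $n'^\perp$ lies arbitrarily close to $\theta$ and hence outside the closed cone $\dd'$. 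At each such crossing the derivative of $\gamma_1(t)(n')$ is negative, so the sign $\epsilon$ equals $+1$ and $\exp(\phi(\dd'))\in G_-(\theta)$; therefore $\Phi_\DD(\gamma_1)\in G_-(\theta)$. A symmetric analysis yields $\Phi_\DD(\gamma_2)\in G_+(\theta)$, and by construction $\Phi_\DD(\gamma_0)=\exp\phi(\dd)\in G_0(\theta)$.

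The ordered-product formula then gives
\[\Phi_\DD=\Phi_\DD(\gamma_2)\cdot\exp\phi(\dd)\cdot\Phi_\DD(\gamma_1),\]
which is a factorisation of the form \eqref{def}. Its uniqueness forces $\Pi_0^\theta(\Phi_\DD)=\exp\phi(\dd)=\Phi_\DD(\dd)$. I expect the main delicacy to be the genericity of the piecewise linear path: one must choose $\alpha,\beta$ close enough to $\theta$, and then perturb the outer segments to ensure all wall crossings are transversal and no cone of $\S$ of codimension $\geq 2$ is hit, while retaining the sign analysis above. Once this is set up, the rest reduces to linear interpolation of the pairing with the various primitive vectors $n'$.
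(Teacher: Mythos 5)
Your argument is correct and is essentially the paper's proof: compute $\Phi_\DD$ along a generic path from $M^+_\R$ to $M^-_\R$ crossing $\dd$ at (or near) $\theta$, check that the factors coming from crossings before and after the wall lie in $G_-(\theta)$ and $G_+(\theta)$ respectively, and conclude by uniqueness of the factorisation \eqref{def}. The only difference is that the paper uses a single $\S$-generic straight-line path through the wall, so that each function $t\mapsto\gamma(t)(n')$ is affine and the sign analysis (including ruling out the case $\theta(n')=0$, which would contradict transversality) is immediate, avoiding your $\alpha,\beta$-closeness and perturbation bookkeeping.
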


\begin{proof}
We can find an $\S$-generic straight-line path $\gamma\colon [0,1]\to M_\R$  connecting $M^+_\R$ to $M^-_\R$ and intersecting $\dd$. Let \[0<t_1<\cdots<t_k<1\] be the points for which $\theta_i=\gamma(t_i)$ lies on some wall $\dd_i$. In particular  $\dd=\dd_p$ for some unique $1\leq p\leq k$.  We set $\theta=\theta_p\in \dd$. The definition of the path-ordered product gives
\[\Phi_\DD=\Phi_\DD(\dd_k)\cdots \Phi_\DD(\dd_p)\cdots \Phi_\DD(\dd_1).\]
For each  wall $\dd_i$ there is a unique primitive element $n_i\in N^+$   such that $\dd_i\subset n_i^\perp$.   For $i>p$ the value of $\theta(n_i)$ lies between $\gamma(0)(n_i)>0$ and $\theta_i(n_i)=0$,  and hence  $\g(\dd_i)\subset \g_+(\theta)$. Similarly when $i<p$ we have $\g(\dd_i)\subset \g_-(\theta)$. Thus we conclude that
\[\Phi_\DD(\dd_i)\in \left\{\begin{array}{lll} \vspace{.15em}

G_+(\theta) &\text{ if $i>p$} \\ \vspace{.15em}

G_0(\theta) &\text{ if $i=p$} \\ 

G_-(\theta) &\text{ if $i<p$.}
\end{array} \right.\]The claim then follows from the uniqueness of the decomposition \eqref{def}.
\end{proof}

Note that it follows that the equivalence class of a $\g$-complex $\DD$ is  determined by the element $\Phi_\DD\in G$.

\subsection{}
Let us assume that there is a finite subset $P\subset N^+$ such that
\begin{equation}
\label{decomp2}\g=\bigoplus_{n\in P} \g_n.\end{equation}
 This implies in particular that $\g$ is nilpotent.
 Consider the corresponding cone complex $\S=\S(P)$ described in Section \ref{easy}. 
If $\dd\in \S$ is a wall then for any point $\theta$ in the relative interior of $\dd$   the decomposition \eqref{de} is constant, and moreover satisfies $\g_0(\theta)=\g(\dd)$. Given an element $g\in G$ we  can therefore define a $\g$-complex $\DD(g)=(\S,\phi)$ by taking
\[\Phi_{\DD(g)}(\dd)=\Pi_0^\theta\,(g),\]
where  $\theta\in \dd$ is any point in the relative interior of $\dd$.

\begin{lemma}
\label{surj}
The $\g$-complex $\DD(g)$ is consistent and satisfies $\Phi_{\DD(g)}=g$.
\end{lemma}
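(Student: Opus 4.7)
My plan is to prove the equality $\Phi_{\DD(g)}=g$ by tracking the unique decompositions of $g$ along a straight-line path, and then bootstrap this computation to consistency via a codim-2 local analysis.

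For the equality, I would choose an $\S$-generic straight-line path $\gamma\colon[0,1]\to M_\R$ from $\theta_+\in M_\R^+$ to $\theta_-\in M_\R^-$, crossing walls $\dd_1,\dots,\dd_k$ at times $t_1<\dots<t_k$. Because $\gamma(t)(n)$ is strictly decreasing for every $n\in P$, all signs $\epsilon_i=+1$, so $\Phi_{\DD(g)}(\gamma)=\Phi_{\DD(g)}(\dd_k)\cdots\Phi_{\DD(g)}(\dd_1)$. At every non-wall time $t$ the set $P_0^t$ is empty, giving a unique factorisation $g=g_+^t\cdot g_-^t$ with $g_\pm^t\in G_\pm(\gamma(t))$. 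Between consecutive walls these factors are constant.

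The core step is to compare the factorisations at $t_i^-$ and $t_i^+$. Writing $P_0^{(i)}$ for the set of elements of $P$ vanishing on $\dd_i$, members of $P_0^{(i)}$ lie in $P_+^{t_i^-}$ just before the crossing and in $P_-^{t_i^+}$ just after. For $n_1\in P_+^{t_i^+}$ and $n_2\in P_0^{(i)}$ we have $\theta_i(n_1+n_2)=\theta_i(n_1)>0$, so $n_1+n_2\in P_+^{t_i^+}$ whenever it lies in $P$; hence $\g_+^{t_i^+}$ is an ideal in $\g_+^{t_i^-}$ with quotient $\g_0^{(i)}$. The corresponding semidirect product $G_+^{t_i^-}=G_+^{t_i^+}\rtimes G_0^{(i)}$ then yields a unique refinement $g_+^{t_i^-} = g_+^{t_i^+}\cdot g_0^{(i)}$ with $g_0^{(i)}=\Pi_0^{\theta_i}(g)=\Phi_{\DD(g)}(\dd_i)$. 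A symmetric computation on the negative side gives $g_-^{t_i^+}=g_0^{(i)}\cdot g_-^{t_i^-}$. Iterating from the initial condition $g_-^{0^+}=1$ (since $P_-^{0^+}=\emptyset$) telescopes to $g_-^{t_k^+}=g_0^{(k)}\cdots g_0^{(1)}$, and at $\theta_-$ we have $g_+^{1^-}=1$, so $g=g_-^{1^-}=g_0^{(k)}\cdots g_0^{(1)}=\Phi_{\DD(g)}(\gamma)=\Phi_{\DD(g)}$.

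For consistency, any two $\DD(g)$-generic paths with the same endpoints are connected by a homotopy that is generic except at finitely many times, at each of which the path meets a single codim-2 cone $\tau\in\S$. It suffices to show that a small loop around such a $\tau$ gives the identity. Near $\tau$ the signs of $\theta(n)$ are constant for $n\in P\setminus\tau^\perp$, so every wall $\dd\supset\tau$ satisfies $P_0^{\dd}\subset\tau^\perp\cap P$ and every wall-crossing element $\Pi_0^{\theta}(g)$ in the loop lies in the subgroup $G_\tau=\exp\bigl(\bigoplus_{n\in\tau^\perp\cap P}\g_n\bigr)$. Restricting to a 2-plane transverse to $\tau$ and to the Lie algebra $\g_\tau$ (graded by $\tau^\perp\cap P$, spanning a 2-dimensional subspace) produces an instance of the same set-up as above in dimension 2; reapplying the telescoping argument of Steps 1--3 in that restricted setting shows that any straight-line path between the locally positive and locally negative chambers gives a fixed element of $G_\tau$. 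The two halves of the loop then compose to the identity.

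The main obstacle is the refined factorisation at each wall, which must hold on the level of groups, not just vector spaces; the crucial input is the ideal property $[\g_+^{t_i^+},\g_0^{(i)}]\subset\g_+^{t_i^+}$ forced by the specific linear functional $\theta_i$ and the grading. A secondary subtlety in the consistency argument is confirming that the local subgroup $G_\tau$ really captures all wall-crossings near $\tau$, which reduces to the elementary observation that $\dd\supset\tau$ implies $P_0^\dd\subset P_0^\tau$.
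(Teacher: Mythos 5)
Your computation of $\Phi_{\DD(g)}$ along the straight-line path is correct, and its engine is exactly the paper's: at a crossing of a wall $\dd_i$ the only graded pieces changing sign are those in $P_0^{(i)}$, the ideal property $[\g_+(\theta_i),\g_0(\theta_i)]\subset\g_+(\theta_i)$ gives the unique refinement of the two-term factorisation, and uniqueness of $g=g_+\cdot g_0\cdot g_-$ identifies the transferred factor with $\Pi_0^{\theta_i}(g)=\Phi_{\DD(g)}(\dd_i)$; your telescoping then yields $\Phi_{\DD(g)}(\gamma)=g$. So the equality half is fine.

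The gap is in the consistency half. To ``reapply the telescoping argument in the restricted setting'' near a codimension-two cone $\tau$ you must first say which element of $G_\tau=\exp\big(\bigoplus_{n\in\tau^\perp\cap P}\g_n\big)$ the local two-dimensional diagram reconstructs --- namely $g_\tau:=\Pi_0^{\theta_0}(g)$ for $\theta_0$ a general point of the relative interior of $\tau$ --- and then prove the compatibility that for a general point $\theta$ of a wall $\dd\supset\tau$ close to $\theta_0$ one has $\Pi_0^{\theta}(g)=\Pi_0^{\theta}(g_\tau)$, the right-hand projection being taken inside $G_\tau$. Without this, the two halves of the loop are just products of elements of $G_\tau$ with no single group element whose factorisations can telescope, so the asserted conclusion does not follow as written. (The compatibility is true, and is proved by the same ideal/regrouping computation you already use, but it is a genuine missing step; the homotopy reduction to loops around joints, and the cancelling pairs of crossings of a single wall created at tangency events, are likewise only asserted.) Note also that your own bookkeeping makes this detour unnecessary: tracking $g_\pm^t=\Pi_\pm^{\gamma(t)}(g)$ along an \emph{arbitrary} generic path, where an $\epsilon_i=-1$ crossing simply transfers $g_0^{(i)}$ back from the minus factor to the plus factor, gives $\Phi_{\DD(g)}(\gamma)=\Pi_-^{\gamma(1)}(g)\cdot\Pi_-^{\gamma(0)}(g)^{-1}$ for every generic path. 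Since this depends only on the endpoints, consistency and $\Phi_{\DD(g)}=g$ follow simultaneously; this endpoint formula, checked on a single wall crossing via the regrouping $(g_+\cdot g_0)\cdot g_-$ versus $g_+\cdot(g_0\cdot g_-)$, is precisely the paper's proof.
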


\begin{proof}The claim follows immediately from the statement that if  $\gamma\colon [0,1]\to M_\R$ is  any $\DD(g)$-generic path from $\theta_1$ to $\theta_2$ then
\[\Phi_{\DD(g)}(\gamma)=\Pi_+^{\theta_2}(g)^{-1}\cdot \Pi_+^{\theta_1}(g)=\Pi_-^{\theta_2}(g)\cdot \Pi_-^{\theta_1}(g)^{-1}.\]
To prove this note that  it is enough to check it for a  path crossing a single wall $\dd$. On the wall we have a decomposition $g=g_+\cdot g_0\cdot g_-$ as in \eqref{def} with \[g_0=\Phi_{\DD(g)}(\dd)=\Phi_{\DD(g)}(\gamma).\]
On the two sides of the wall $g_0$ becomes an element of either $G_+(\theta)$ or $G_-(\theta)$. Thus the decompositions \eqref{def} on the two sides of the wall have just two terms, and are $(g_+\cdot g_0) \cdot g_-$ and $g_+\cdot (g_0\cdot g_-)$ respectively. Comparing these gives the result.
\end{proof}

\subsection{}
The results of the last two sections together give 

\begin{prop}
\label{ks}
Suppose that  $\g$ has a finite decomposition \eqref{decomp2}. Then the map $\DD\mapsto \Phi_\DD$ defines  a bijection between equivalence classes of consistent $\g$-complexes and elements of the group $G$. \qed
\end{prop}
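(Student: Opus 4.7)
The plan is to derive the bijection by combining Lemma~\ref{lem} and Lemma~\ref{surj}. Surjectivity is essentially immediate: given any $g \in G$, Lemma~\ref{surj} produces a consistent $\g$-complex $\DD(g)$ on the cone complex $\S(P)$ satisfying $\Phi_{\DD(g)} = g$, so the map $\DD \mapsto \Phi_\DD$ hits every element of $G$.

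For injectivity up to equivalence, the key observation is that the intermediate identity obtained in the proof of Lemma~\ref{surj}, namely
\[
\Phi_\DD(\gamma) = \Pi_+^{\gamma(1)}(\Phi_\DD)^{-1} \cdot \Pi_+^{\gamma(0)}(\Phi_\DD),
\]
in fact holds for any consistent $\g$-complex $\DD$ and any $\DD$-generic path $\gamma$, not only for $\\DD = \DD(g)$. To establish this I would first reduce to the case of a path crossing a single wall $\dd$: on such a segment one has $\Phi_\DD(\gamma) = \Phi_\DD(\dd)$, which by Lemma~\ref{lem} equals $\Pi_0^\theta(\Phi_\DD) = g_0$ at an interior point $\theta \in \dd$. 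The two-sided analysis from the proof of Lemma~\ref{surj}, comparing the decompositions $(g_+ g_0) \cdot g_-$ and $g_+ \cdot (g_0 g_-)$ on the two sides of $\dd$, then applies verbatim, and concatenating single-wall segments of a general $\DD$-generic path yields the displayed formula.

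Given this extended formula, injectivity follows at once: if $\DD_1$ and $\DD_2$ are consistent $\g$-complexes with $\Phi_{\DD_1} = \Phi_{\DD_2}$, then for any path $\gamma$ generic for both the right-hand side of the formula depends only on this common element and on the endpoints, so $\Phi_{\DD_1}(\gamma) = \Phi_{\DD_2}(\gamma)$ and $\DD_1$ and $\DD_2$ are equivalent. Combined with surjectivity this yields the desired bijection. The main point to check carefully is that the single-wall analysis from Lemma~\ref{surj} really is insensitive to the particular cone complex underlying $\DD$: its only input on the wall is the element $\phi(\dd)$, which by Lemma~\ref{lem} is determined by $\Phi_\DD$ via the decomposition \eqref{def}. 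Once that point is verified, the proposition reduces to a formal consequence of the two preceding lemmas.
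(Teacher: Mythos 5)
Your overall strategy is the paper's own: the paper derives the proposition from Lemma~\ref{lem} and Lemma~\ref{surj} with no further written argument, and your surjectivity step (quoting Lemma~\ref{surj}) is exactly right. The displayed formula you want for injectivity is also correct. The gap is in the sentence claiming that the two-sided analysis of Lemma~\ref{surj} ``applies verbatim'' to a single-wall segment of an arbitrary consistent $\g$-complex, i.e.\ precisely at the point you yourself flagged. In Lemma~\ref{surj} the underlying cone complex is $\S(P)$ of Example~\ref{easy}, whose walls cover every hyperplane $n^\perp$ with $n\in P$; hence a generic segment crossing a single wall stays, on each side of the crossing point $\theta$, inside one chamber of the hyperplane arrangement, so the decomposition \eqref{de} is constant out to the endpoints and the endpoint decompositions really are $(g_+\cdot g_0)\cdot g_-$ and $g_+\cdot(g_0\cdot g_-)$. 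For a general consistent $\DD=(\S,\phi)$ the complex $\S$ need not contain or refine these hyperplanes: a $\DD$-generic segment meeting a single wall of $\S$ may also cross hyperplanes $n^\perp$ with $\g_n\neq 0$ at points carrying no wall of $\S$ at all (for the $A_2$ diagram of Figure~\ref{figone}, the half-line $y_1+y_2=0$, $y_2>0$ carries no wall, while $\Phi_\DD$ has nontrivial components in the degrees $k(1,1)$). Across such a crossing the signs $\theta(n)$ change, so the projections $\Pi_\pm^{\theta}(\Phi_\DD)$ could a priori jump, the endpoint decompositions are no longer obtained from the one at $\theta$ by absorbing $g_0$ into $g_+$ or $g_-$, and the telescoping concatenation is not justified. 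So the ``only input'' is not just $\phi(\dd)$; it is also a statement about the behaviour of $\Phi_\DD$ on hyperplanes away from the walls of $\S$, which Lemma~\ref{lem} as stated does not give.

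What is missing is the assertion that for a consistent $\DD$ one has $\Pi_0^{\theta'}(\Phi_\DD)=1$ at a generic point $\theta'$ of any hyperplane $n^\perp$ lying outside $\esupp(\DD)$; equivalently, $\theta\mapsto \Pi_\pm^{\theta}(\Phi_\DD)$ does not jump across such points. This is true and is proved by the same device as Lemma~\ref{lem}: take an $\S$-generic straight line from $M_\R^+$ to $M_\R^-$ through $\theta'$; by consistency it computes $\Phi_\DD$, an affine function of the parameter vanishes at most once, so every wall factor along it lies in $G_+(\theta')$ or $G_-(\theta')$, and uniqueness in \eqref{def} forces the middle term to be trivial. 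With this supplement your argument goes through, provided the concatenation is carried out over segments crossing a single hyperplane of the arrangement $\{n^\perp:n\in P\}$ rather than a single wall of $\S$, the two cases (essential wall present, or not) being handled by Lemma~\ref{lem} and by the statement just described. That extra application of the straight-line argument, and not a verbatim repetition of the computation in Lemma~\ref{surj}, is the content hiding behind the paper's remark after Lemma~\ref{lem} that the equivalence class of $\DD$ is determined by $\Phi_\DD$.
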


Consider now  a general $N^+$-graded Lie algebra $\g$, and let $\DD=(\DD_k)_{k\geq 1}$ be a $\Hat{\g}$-complex. Given two points  $\theta_1,\theta_2\in M_\R$ lying outside the essential support of $\DD$, the associated elements $\Phi_{\D_k}(\theta_1,\theta_2)\in G_{\leq k}$ are easily checked to be compatible with the group homomorphisms $\pi_{i,j}$. Taking the limit therefore gives an associated element
\[\Phi_{\DD}(\theta_1,\theta_2)\in \Hat{G}.\]
In particular, taking $\theta_1\in M_\R^+$ and $\theta_2\in M_\R^-$ we get a well-defined element $\Phi_{\DD}\in \Hat{G}$.
 Since an equivalence class of $\Hat{\g}$-complexes is nothing but a compatible sequence of equivalence classes of $G_{\leq k}$-complexes, we immediately have

\begin{prop}
\label{ks2}
The map $\DD\mapsto \Phi_\DD$ defines  a bijection between equivalence classes of consistent $\Hat{\g}$-complexes and elements of the group $\Hat{G}$.
\qed
\end{prop}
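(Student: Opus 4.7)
The plan is a purely formal reduction to Proposition \ref{ks} level by level, followed by passage to the inverse limit. For each $k \geq 1$ the truncation $\g_{\leq k}$ has only finitely many nonzero graded pieces and hence admits a decomposition of the form \eqref{decomp2}, so Proposition \ref{ks} supplies a bijection between equivalence classes of consistent $\g_{\leq k}$-complexes and elements of $G_{\leq k}$.

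The one auxiliary fact I need beyond Proposition \ref{ks} is naturality: for any homomorphism $f \colon \g \to \h$ of finitely $N^+$-graded nilpotent Lie algebras one has $\Phi_{f_*(\DD)} = f(\Phi_\DD)$. This is immediate from the definition of the path-ordered product, since the wall-crossing factors of $f_*(\DD)$ are $\exp(f \circ \phi(\dd)) = f(\exp \phi(\dd))$ and $f$ respects the unipotent group products. Applied to the quotient maps $\pi^{ji} \colon \g_{\leq j} \to \g_{\leq i}$, this yields $\pi^{ji}(\Phi_{\DD_j}) = \Phi_{\pi^{ji}_*(\DD_j)}$, which equals $\Phi_{\DD_i}$ whenever $\pi^{ji}_*(\DD_j) \sim \DD_i$ by the injectivity half of Proposition \ref{ks}. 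Hence given a $\Hat{\g}$-complex $\DD = (\DD_k)$, the sequence $(\Phi_{\DD_k})$ is compatible under $\pi^{ji}$ and assembles into a well-defined element $\Phi_\DD \in \Hat{G}$, as anticipated in the paragraph preceding the statement.

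Injectivity of the induced map on equivalence classes is now immediate: by definition $\DD \sim \DD'$ iff $\DD_k \sim \DD'_k$ for every $k$, iff $\Phi_{\DD_k} = \Phi_{\DD'_k}$ for every $k$ by Proposition \ref{ks}, iff $\Phi_\DD = \Phi_{\DD'}$ in $\Hat{G} = \lim_{\longleftarrow} G_{\leq k}$. For surjectivity, given $g \in \Hat{G}$ with components $g_k \in G_{\leq k}$, I would use Proposition \ref{ks} at each level to pick a consistent $\g_{\leq k}$-complex $\DD_k$ with $\Phi_{\DD_k} = g_k$. Naturality then shows $\pi^{ji}_*(\DD_j)$ and $\DD_i$ both have associated element $g_i \in G_{\leq i}$, so Proposition \ref{ks} forces $\pi^{ji}_*(\DD_j) \sim \DD_i$. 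Thus $(\DD_k)$ is a consistent $\Hat{\g}$-complex whose image is $g$.

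No substantive obstacle arises: the real work is already packaged inside Proposition \ref{ks}, and what remains is the naturality check and standard inverse-limit bookkeeping.
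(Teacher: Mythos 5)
Your proposal is correct and follows essentially the same route as the paper: apply Proposition \ref{ks} to each finite truncation $\g_{\leq k}$, observe that the invariants $\Phi_{\DD_k}$ are compatible with the projections $\pi^{ji}$ (the paper's "easily checked" compatibility is exactly your naturality statement), and identify compatible sequences of equivalence classes with elements of $\Hat{G}=\varprojlim G_{\leq k}$. The only difference is that you spell out the naturality check and the surjectivity bookkeeping that the paper leaves implicit.
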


It follows that $\Hat{\g}$-complexes  up to equivalence coincide with scattering diagrams up to equivalence as defined in \cite{ghkk}.


\section{Representations of quivers}

This section contains basic definitions and results concerning representations of quivers with relations. These are all of course well-known but it is not so easy to find good references for the material on moduli stacks.

\subsection{}
\label{quivnot}
A quiver is a finite oriented graph specified by sets  $(V(Q),A(Q))$ of vertices and arrows respectively, and source and target maps $s,t\colon A(Q)\to V(Q)$. We write $\C Q$ for the path algebra of $Q$, and $\C Q_{\geq k}\subset \CQ$ for the subspace spanned by  paths of length $\geq k$. For our purposes a quiver with relations is a pair $(Q,I)$, where $Q$ is a quiver, and $I\subset \C Q_{\geq 2}$ is a two-sided ideal spanned by linear combinations of paths of length at least 2. We denote by  \[\A=\rep(Q,I)=\mod \C Q/I\] the abelian category   of finite-dimensional representations of the pair $(Q,I)$, or equivalently  finite-dimensional left modules for the quotient algebra $\C Q/I$.
There is a group homomorphism
\[d\colon K_0(\A)\to \Z^{V(Q)}\]
sending a representation to its dimension vector. 
We set \[N=\Z^{V(Q)}, \quad M=\Hom_\Z(N,\Z).\]
We denote by $(e_i)_{i\in V(Q)}\subset N$ the canonical basis indexed  by the vertices of $Q$.
The corresponding  positive cone
$N^+\subset N$ consists  of dimension vectors of nonzero objects of $\A$. 
We define $\delta\in M$ to be the element for which $\delta(E)=\dim_\C(E)$ for any representation $E\in \A$.
We use the notation $M^+$,  $M_\R$, $N^\oplus$ etc as in Section \ref{basic}.

\subsection{}
\label{blib}
There is an algebraic stack  $\M$ parameterizing all objects of the category $\A$. It is defined formally as a fibered category over the category of schemes as follows. The objects of  $\M$ over a scheme $S$ are pairs $(\E,\rho)$ where $\E$ is a locally-free $\O_S$-module of finite rank, and  \[\rho\colon \C Q/I\to \End_S(\E)\]
is an algebra homomorphism.
If $(\E',\rho')$ is another object of $\M$ lying over a scheme $S'$, then
a morphism  \[(\E',\rho')\to (\E,\rho)\]
in $\M$  lying over a morphism of schemes $f\colon S'\to S$ is an isomorphism of $\O_S$-modules $\theta\colon f^*(\E)\to \E'$ which intertwines the maps $\rho$ and $\rho'$. Here we have taken the usual step of choosing, for each morphism of schemes, a pullback of every coherent sheaf on its target. The intertwining condition is that for any $a\in \CQ/I$ there is a commuting diagram
\[\begin{CD} f^*(\E) &@>f^*(\rho(a))>> &f^*(\E) \\ @V\theta VV && @V\theta VV \\ \E' & @>\rho'(a)>>&\E'\end{CD}\]
The stack axioms follow easily from the corresponding statements for the stack of locally-free sheaves.
We prove that $\M$ is algebraic in the next subsection.

\subsection{}The following statement is extremely well-known, but one does not often find a treatment in the language of stacks, so we briefly indicate a proof.

\begin{lemma}
The stack $\M$ splits as a disjoint union
\begin{equation}
\label{disjoint}\M=\big.\coprod_{d\in N^\oplus} \M_d\end{equation}
 of open and closed substacks $\M_d$ parameterizing representations of  a fixed dimension vector. Each of these substacks can be presented as a  quotient of an affine variety by an affine algebraic group.
 \end{lemma}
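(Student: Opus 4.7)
The plan breaks into two parts corresponding to the two assertions: the disjoint union decomposition, and the quotient presentation of each piece.

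For the decomposition, I would exploit the orthogonal idempotents $e_i \in \C Q/I$ at the vertices $i \in V(Q)$. Given an object $(\E,\rho)$ of $\M$ over a scheme $S$, the elements $\rho(e_i) \in \End_S(\E)$ are pairwise orthogonal idempotents summing to the identity, so they produce a decomposition $\E = \bigoplus_{i\in V(Q)} \E_i$ with $\E_i = \im \rho(e_i)$. Since each $\E_i$ is a direct summand of a locally free sheaf of finite rank, it is itself locally free of finite rank, and the rank function $i \mapsto \operatorname{rk}_{\O_S} \E_i$ is locally constant on $S$. This locally constant dimension vector $d(\E,\rho) \colon S \to N^\oplus$ is stable under isomorphisms and pullback, hence defines a morphism of stacks $\M \to N^\oplus$ (viewing $N^\oplus$ as a disjoint union of points). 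The fibers are the desired open and closed substacks $\M_d$.

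For the quotient presentation, fix $d \in N^\oplus$ and consider the affine variety
\[ R_d = \prod_{a\in A(Q)} \Hom_\C\bigl(\C^{d(s(a))},\C^{d(t(a))}\bigr). \]
The two-sided ideal $I \subset \C Q_{\geq 2}$ is generated by finitely many elements, each of which defines a morphism $R_d \to \Hom_\C(\C^{d(i)},\C^{d(j)})$ given by polynomial expressions in the matrix entries; the simultaneous vanishing locus cuts out a closed subvariety $R_{d,I} \subset R_d$. The affine algebraic group $G_d = \prod_{i\in V(Q)} \GL_{d(i)}$ acts on $R_{d,I}$ by change of basis at each vertex. Over $R_{d,I}$ there is a tautological object of $\M_d$: take $\E_{\mathrm{taut}} = \bigoplus_i \O_{R_{d,I}}^{\oplus d(i)}$ and let $\rho_{\mathrm{taut}}$ be the representation whose value on each arrow is the universal matrix. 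This gives a $G_d$-equivariant morphism $R_{d,I} \to \M_d$, and hence a morphism
\[ \Psi \colon [R_{d,I}/G_d] \lra \M_d. \]

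To conclude, I would check that $\Psi$ is an isomorphism of stacks. Given $(\E,\rho) \in \M_d(S)$, the vertex decomposition $\E = \bigoplus_i \E_i$ obtained above has each $\E_i$ locally free of rank $d(i)$; then the sheaf
\[ P_{(\E,\rho)} = \underline{\mathrm{Isom}}_S\Bigl(\bigoplus_i \O_S^{\oplus d(i)},\, \bigoplus_i \E_i\Bigr), \]
of direct-sum-preserving frames is a $G_d$-torsor over $S$ (locally $\E_i$ trivializes and Isom of free sheaves is a $\GL$-torsor). Over $P_{(\E,\rho)}$ the representation $\rho$ becomes the pullback of some morphism $P_{(\E,\rho)} \to R_{d,I}$, and this construction is inverse to $\Psi$ on $S$-points. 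The routine bookkeeping is verifying compatibility with pullbacks and that the correspondence is an equivalence of groupoids functorially in $S$.

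The main point requiring care is the torsor argument in the last step: one must check that the frame sheaf really is a $G_d$-torsor (in the fppf or even Zariski topology, since $\GL_n$-torsors are Zariski locally trivial), and that the resulting descent data correctly match the stack-quotient description $[R_{d,I}/G_d]$. Everything else is a formal consequence of the idempotent decomposition and the universal property of the representation scheme. Algebraicity of $\M$ then follows from the presentation together with the already-established decomposition~\eqref{disjoint}.
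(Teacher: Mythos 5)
Your proposal is correct and follows essentially the same route as the paper: the vertex decomposition via the idempotents $\epsilon_i$ (the paper invokes the equivalence between $\CQ/I$-modules in locally free sheaves and quiver representations, which is the same thing) gives the splitting by locally constant rank, and the frame-bundle torsor argument identifies $\M_d$ with $[\Rep(d)/\GL(d)]$, the inverse-equivalence bookkeeping being exactly what the paper leaves to the reader. The only cosmetic point is that you need not assume $I$ is finitely generated: imposing all relations still cuts out a closed subscheme of the affine space of matrices, since the ambient polynomial ring is Noetherian.
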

 
 \begin{proof}
The usual equivalence of categories between modules for the algebra $\CQ/I$ and representations of  $(Q,I)$  extends to show that the groupoid $\M(S)$ can be equivalently described in terms of representations of $Q$ in locally-free $\O_S$-modules. Thus an $S$-valued point  of $\M$ can be taken to consist of locally-free $\O_S$-modules $\V_i$ for each vertex $i\in V(Q)$ and morphisms $\rho(a)\colon \V_{s(a)}\to \V_{t(a)}$ for each arrow $a\in A(Q)$, such that the   relations in $I$ are satisfied.  The first statement then follows from the fact that the rank of each sheaf $\V_i$ is locally constant on $S$.
 
 Let us now fix a dimension vector $d\in N^{\oplus}$ and define the algebraic group
 \[\GL(d)=\prod_{i\in V(Q)} \GL(d_i).\]
 Consider an $S$-valued point of the substack $\M_d$.  Taking the frame bundles associated to the vector bundles $\V_i$ defines a principal $\GL(d)$ bundle $\pi\colon P(S)\to S$. Pulling back the representation $(\V_i,\rho_a)$ to $P(S)$, the bundles $\V_i$ become canonically trivialised and the representation corresponds to  a map from $P(S)$ to the closed subvariety
\[\Rep(d)\subset \prod_{a\in A(Q)} \Hom_{\C}(\C^{d_{s(a)}},\C^{d_{t(a)}})\]
cut out  by the given relations. In this way one sees that 
\[\M_d\isom [\Rep(d)/\GL(d)],\]
where $\GL(d)$ acts on $\Rep(d)$ by gauge transformations\[(g_i)_{i\in V(Q)} \cdot (\phi_a)_{a\in A(Q)}=(g_{t(a)}^{-1} \cdot \phi_a \cdot g_{s(a)})_{a\in A(Q)}.\]
We leave the  reader to fill in the details of this argument.
\end{proof}

It follows in particular from this  that $\M$ is an Artin stack, locally of finite-type over $\C$ and with affine diagonal.

\subsection{}
We shall also need the stack $\M^{(2)}$ of short exact sequences in $\A$. The objects of $\M^{(2)}$ over a scheme $S$ consist of  objects $(\E_i,\rho_i)$ of $\M(S)$ for $i=1,2,3$, together with morphisms $\alpha,\beta$ of $\O_S$-modules which intertwine the maps $\rho_i$, and  form  a short exact sequence
\begin{equation}
\label{finger}0\lra \E_1\lRa{\alpha} \E_2\lRa{\beta} \E_3\lra 0\end{equation}
of $\O_S$-modules.
Given another such object defined by objects $(\E'_i,\rho'_i)$ of $\M(S')$ and morphisms $\alpha',\beta'$, a morphism between them lying over $f\colon S'\to S$ is given by a commuting diagram of morphisms of $\O_{S'}$-modules 
\[\begin{CD} 0 @>>> &f^*(\E_1) @>f^*(\alpha)>> &f^*(\E_2) @>f^*(\beta)>> &f^*(\E_3) @>>> 0\\
&&&@V\theta_1VV &@V\theta_2VV &@V\theta_3VV \\
0 @>>> & \E'_1 @>\alpha'>> &\E'_2 @>\beta'>> &\E'_3 @>>> 0\end{CD}\]
in which vertical arrows are isomorphisms intertwining the actions of $\rho_i,\rho'_i$ as before. The stack axioms follow easily from those for $\M$.

\subsection{}
There is a
 diagram of morphisms of stacks
\begin{equation}\label{ba}\begin{CD}
\M^{(2)} &@>b>> \M\\
 @V(a_1,a_2)VV \\
\M\times\M\end{CD}\end{equation}
where  
$a_1,a_2$ and $b$ send a short exact sequence \eqref{finger}
 to the objects $\E_1,\E_3$ and $\E_2$ respectively.

\begin{lemma}
\label{call}
\begin{itemize}
\item[(a)]  The morphism $(a_1,a_2)$ is of finite type.
\item[(b)] The morphism $b$ is representable and proper.
\end{itemize}
\end{lemma}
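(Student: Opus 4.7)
The plan is to prove both parts by decomposing $\M = \bigsqcup_d \M_d$ via the preceding lemma and reducing to fixed dimension vectors, then exploiting the quotient stack presentations explicitly. Fix $d_1, d_3 \in N^\oplus$, set $d_2 = d_1 + d_3$, and let $\Rep_u(d_1, d_3) \subset \Rep(d_2)$ denote the closed subvariety of representations for which the standard embedded subspace $\C^{d_1} \subset \C^{d_2}$ at every vertex is a subrepresentation. Let $P \subset \GL(d_2)$ be the corresponding block-upper-triangular parabolic. Arguing exactly as in the proof that $\M_d \isom [\Rep(d)/\GL(d)]$, one gets a natural presentation $\M^{(2)}_{d_1, d_3} \isom [\Rep_u(d_1, d_3)/P]$. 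The morphism $(a_1, a_2)$ is then induced by the Levi quotient $P \to \GL(d_1) \times \GL(d_3)$ together with the morphism $\Rep_u(d_1, d_3) \to \Rep(d_1) \times \Rep(d_3)$ reading off diagonal blocks, while $b$ is induced by the inclusions $P \hookrightarrow \GL(d_2)$ and $\Rep_u(d_1, d_3) \hookrightarrow \Rep(d_2)$.

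For part (a), smooth descent for the property of being of finite type reduces us to showing that the diagonal-block map $\Rep_u(d_1, d_3) \to \Rep(d_1) \times \Rep(d_3)$ is of finite type; but this is a morphism between finite-type affine $\C$-varieties and so the claim is immediate. Equivalently, one can check fiberwise: over a geometric point $(E_1, E_3)$ the fiber of $(a_1, a_2)$ is the stack $[\Ext^1(E_3, E_1)/\Hom(E_3, E_1)]$ (the automorphism group being the obvious translations of an extension), and both the extension vector space and the acting group are finite dimensional.

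For part (b), fix $d \in N^\oplus$ and an $S$-point $f \colon S \to \M_d$, corresponding to locally-free sheaves $\E_i$ of rank $d_i$ together with the $\CQ/I$-action $\rho$. The fibered product $S \times_{\M_d} \M^{(2)}_{d_1, d_3}$ represents the functor assigning to $g \colon T \to S$ the set of $\O_T$-subbundles $\F_i \subset g^*(\E_i)$ of rank $d_{1,i}$ such that for every arrow $a \in A(Q)$ one has $\rho(a)(\F_{s(a)}) \subset \F_{t(a)}$. This functor is a closed subscheme of the product of relative Grassmann bundles $\prod_{i \in V(Q)} \operatorname{Gr}(d_{1,i}, \E_i) \to S$, cut out by the vanishing of the composite morphisms $\F_{s(a)} \hookrightarrow \E_{s(a)} \xrightarrow{\rho(a)} \E_{t(a)} \twoheadrightarrow \E_{t(a)}/\F_{t(a)}$, one for each arrow. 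Since each relative Grassmann bundle is projective over $S$ and projectivity is preserved by finite products and closed immersions, the resulting quiver Grassmannian is projective, hence proper, over $S$. Summing over the finitely many splittings $d = d_1 + d_3$ with $d_i \leq d$ componentwise then proves representability and properness of $b$.

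The genuinely new content is part (b), and within it the only real step beyond bookkeeping is verifying that the compatibility condition with $\rho$ defines a closed subscheme of the product of Grassmann bundles — this is a standard but easily mishandled check, and I expect it to be the main place where care is needed. Part (a) is essentially formal once the quotient presentation is in hand.
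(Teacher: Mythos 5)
Your proof of part (b) is essentially the paper's argument: pull back along an $S$-point of $\M$, identify the fibre product with the functor of subsheaves preserved by the $\CQ/I$-action, and exhibit it as a closed subscheme of a relative Grassmannian over $S$; you simply spell out the vertex-by-vertex Grassmann bundles and the zero-locus conditions indexed by arrows, which the paper leaves implicit. For part (a) you take a genuinely different route. The paper proves (b) first and then gets (a) for free from the identity $(a_1,a_2)^{-1}(\M_{d_1}\times\M_{d_2})=b^{-1}(\M_{d_1+d_2})$: the right-hand side is of finite type because $\M_{d_1+d_2}$ is and $b$ is proper, and a morphism between finite-type stacks is of finite type. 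You instead establish a presentation $\M^{(2)}_{d_1,d_3}\isom[\Rep_u(d_1,d_3)/P]$ with $P$ the block parabolic and argue directly; this is more self-contained and makes the geometry of $\M^{(2)}$ explicit, at the cost of having to justify the presentation (routine, by the same frame-bundle argument as for $\M_d\isom[\Rep(d)/\GL(d)]$), whereas the paper's deduction needs no new input beyond (b).

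Two caveats on your part (a). First, the pullback of $(a_1,a_2)$ along the atlas $\Rep(d_1)\times\Rep(d_3)\to\M_{d_1}\times\M_{d_3}$ is not the scheme $\Rep_u(d_1,d_3)$ but the quotient $[\Rep_u(d_1,d_3)/U]$ by the unipotent radical of $P$; this is harmless since $U$ is of finite type, so the pullback is still a finite-type stack over a finite-type target, but as stated the reduction is slightly off. Second, the aside that one can ``equivalently check fiberwise'' is not a valid argument: finite type is not a condition on geometric fibres (a non-quasi-compact morphism can have finite-type, even zero-dimensional, fibres), and indeed the paper records the fibre description $[\Ext^1(E_3,E_1)/\Hom(E_3,E_1)]$ only as a remark it does not use. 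Your main argument does not rely on this aside, so the proof stands with it deleted.
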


\begin{proof}
We prove (b) first. 
Suppose $f\colon S\to \M$ is a morphism with $S$ a scheme. It corresponds to a pair $(\E,\rho)$ as before.  The objects of the fibre product stack $S\times_\M \M^{(2)}$ over a scheme $f\colon T\to S$ are short exact sequences of locally-free $\O_T$-modules
\[0\lra \E_1\lra f^*(\E)\lra \E_3\lra 0\]
such that $f^*(\rho(a))(\E_1)\subset \E_1$ for all $a\in \CQ/I$. This is represented by a closed subset of the relative Grassmannian  of the sheaf $\E$ over $S$. 
To prove (a) note that \[(a_1,a_2)^{-1}(\M_{d_1}\times\M_{d_2})=b^{-1}(\M_{d_1+d_2}).\] This stack is of finite type since both the stack $\M_{d_1+d_2}$ and the morphism  $b$ are. It follows that the morphism $(a_1,a_2)$ is also of finite type. 
\end{proof}

Lemma \ref{call} (b) implies that $\M^{(2)}$ is an algebraic stack, since pulling back an atlas for $\M$ gives an atlas for $\M^{(2)}$. Note also that the morphism $(a_1,a_2)$ is  not representable. The fibre  over a
point of $\M \times \M$ corresponding to a pair of  representations
$(E_1,E_3)$ is the quotient stack
\[[\Ext^1(E_3,E_1)/\Hom(E_3,E_1)],\]
with the action of the vector space $\Hom_\A(E_3,E_1)$ being the
trivial one. See  \cite[Proposition 6.2]{bridgeland2} for a proof of this fact (which we shall not explicitly use in what follows).


 \section{Motivic Hall algebras}
\label{hall}

 To construct a suitable Lie algebra for our scattering diagram we consider motivic Hall algebras as introduced by Joyce \cite{joyce1}. The construction we need is reviewed in detail in \cite{bridgeland2} in the case of categories of coherent sheaves on smooth projective varieties, and only very minor modifications are required for  the case of categories of representations of quivers with relations. Motivic Hall algebras of quiver representations  were also used by Nagao \cite{nagao}.

\subsection{}
Let $\M$ be an Artin stack, locally of finite type over $\C$ and with affine stabilizers. There
is a 2-category of algebraic stacks over $\M$.
 Let $\operatorname{St/\M}$ denote the full subcategory  consisting of objects
\begin{equation}
\label{obvious2}f \colon X \to \M\end{equation} for which $X$ is of
finite type over $\C$ and has affine stabilizers. 

\begin{defn}
\label{rel} Let $\kst{\M}$ be the free abelian group with basis given by
isomorphism classes of objects  of
$\operatorname{St/\M}$,
  modulo the subgroup spanned by relations
  \begin{itemize}
\item[(a)] for every object \eqref{obvious2}, and every closed substack $Y\subset X$ with complementary open substack $U=X\setminus Y\subset \M$, a relation
\[[X\lRa{f}\M]=[Y\lRa{f|_Y} \M] + [U\lRa{f|_U} \M].\]

\item[(b)]for every object \eqref{obvious2}, and every pair of morphisms 
\[h_1\colon Y_1 \to X, \quad h_2\colon Y_2\to X\]
which are locally-trivial  fibrations in the Zariski topology
  with the same fibres,  a relation
\[[Y_1\lRa{g\circ h_1} \M]=[Y_2\lRa{g\circ h_2} \M].\]

\end{itemize}
\end{defn}

For (b) we recall that a morphism of stacks is said to be a locally-trivial fibration in the Zariski topology if it is representable, and if any pullback to a scheme is a locally-trivial fibration of schemes in  the Zariski topology. 

\subsection{}
\label{sunny}
We shall frequently use the following simple observation.
Suppose we have a  commutative diagram
\[ \xymatrix@C=.6em{
X_1\ar[dr]_{f_1}\ar[rr]^{g} && X_2\ar[dl]^{f_2}\\
&\M }
\]with $X_1,X_2$ algebraic stacks of finite type over $\C$ with  affine stabilizers as above. 
Suppose also that the morphism of stacks $g$ induces an equivalence at the level of $\C$-valued points. Then we have an equality  
\[[X_1\lRa{f_1} \M]=[X_2\lRa{f_2} \M],\]
in the group $K(\St/\M)$. See  \cite[Sections 2--3]{bridgeland2} for more details. The basic point is that given any such morphism $g$, one can stratify the stacks $X_i$ by locally-closed substacks so that $g$ becomes an isomorphism on each substack.

\subsection{}
The group $\kst{\M}$ has the structure of a $\kst{\C}$-module,
defined by setting
\[[X]\cdot [Y\lRa{f} \M]=[X\times Y\lRa{f\circ\pi_2} \M]\]
and extending linearly.
There is a unique  ring homomorphism
\[\Upsilon \colon \kst{\C}\lra \C(t)\]
which takes the class of a smooth projective variety over $\C$ (considered as a representable stack) to the Poincar{\'e} polynomial
\[\Upsilon([X])=\sum_{i=0}^{2d} \dim_\C H^i(X_{\an},\C)\cdot t^i\in \C[q].\]
Here $X_{\an}$ denotes $X$ considered as a compact complex manifold, and $H^i(X_{\an},\C)$ denotes  singular cohomology.
We can therefore consider 
\begin{equation}\label{vect}K_\Upsilon(\St/M)=K(\St/\M)\tensor_{K(\St/\C)} \C(t).\end{equation}
More concretely, this is the $\C(t)$ vector space with basis the  isomorphism classes of objects \eqref{obvious2} as above, modulo the relations (a)--(b) of Definition \ref{rel} and the extra relations
\[[X\times Y\lRa{f\circ\pi_2} \M]=\Upsilon([X])\cdot [Y\lRa{f} \M]\]
The resulting vector space is what Joyce  \cite[Section 4.3]{joyce1}  denotes $\operatorname{SF}(\M,\Upsilon,\C(t))$.

\subsection{}
The abelian group $K(\St/\M)$ becomes a ring  when equipped with the convolution product coming from the diagram \eqref{ba}. Explicitly this product is given by the rule
\[ [X_1\lRa{f_1}\M] * [X_2\lRa{f_2} \M] = [Z\lRa{b\circ h}\M], \]
where  $h$ is defined by the following Cartesian
square
\[\begin{CD}
Z & @>h>> &\M^{(2)} &@>b>> \M\\
 @VVV  &&@VV(a_1,a_2)V \\
X_1\times X_2 &@>f_1\times f_2>> &\M\times\M\end{CD}\]
We refer the reader to \cite[Section 4]{bridgeland2} for more details.
The convolution product is easily seen to be $K(\St/\C)$-linear and so also defines an algebra structure on the $\C(t)$-vector space $K_\Upsilon(\St/\M)$. We denote the resulting $\C(t)$-algebra by
$H(Q,I)$.


\subsection{}
\label{dully}
Let us consider the general situation of a graded   algebra
\[A=\bigoplus_{n\in N^\oplus} A_n, \qquad A_{n_1}\cdot A_{n_2}\subset A_{n_1+n_2}.\]
For each $k\geq 1$ we can consider the ideal $A_{>k}=\oplus_{\delta(n)>k} \, A_n,$, and the corresponding quotient $A_{\leq k}=A/A_{\geq k}$. Taking the limit of the obvious maps gives the algebra 
 \[\Hat{A}=\lim_{\longleftarrow} \;A_{\leq k} .\]
 The subspace $\g=A_{>0}\subset A$ is a Lie algebra under commutator bracket. The corresponding completion $\Hat{\g}=\Hat{A}_{>0}$ is a pro-nilpotent Lie algebra under commutator bracket. As usual, we denote the corresponding pro-unipotent group by $\Hat{G}$.

 \subsection{}
 \label{dull}
 Continuing the notation from Section \ref{dully}, we recall
the following  standard facts.

 \begin{itemize}
 \smallskip
 
\item[(i)]There is an embedding
\[\phi\colon \Hat{G}\hookrightarrow \Hat{A}\]
which sends an element $\exp(x)\in \Hat{G}$  to the element of $\Hat{A}$ obtained by taking the exponential of $x\in \Hat{\g}$ inside the algebra $\Hat{A}$. The Baker-Campbell-Hausdorff formula  ensures that\[\phi(g_1\cdot g_2)=\phi(g_1)\cdot \phi(g_2).\]
Thus we can identify $\Hat{G}$ with the subset
$\one+\Hat{A}_{>0} \subset \Hat{A}.$\smallskip

\item[(ii)] Exponentiating the adjoint action of $\Hat{\g}$ on $\Hat{A}$ gives an action of  $\Hat{G}$  by  algebra automorphisms on $\Hat{A}$. Under  the embedding $\phi$, this corresponds to conjugation:
\[
\exp(x)(a):=\exp \,[x,-] (a)=\phi(x)\cdot a\cdot \phi(x)^{-1}.\]
\end{itemize}
We now apply these general ideas to the  Hall algebra $H(Q,I)$.

\subsection{}

The decomposition \eqref{disjoint} induces a grading
\begin{equation}
\label{phd} H(Q,I)=\bigoplus_{d\in N^{\oplus}} H(Q,I)_d.\end{equation} 
where $H(Q,I)_d=K_\Upsilon(\St/\M_d)$. We define the Lie subalgebra \[\g_{\Hall}:=H(Q,I)_{>0}.\]%

Note that
$\M_0= \Spec(\C)$
is a point, and the identity of the algebra $H(Q,I)$ is represented by the symbol  \[\one=\big[\M_0\subset \M\big].\]

Applying the general statements of the last two subsections gives a completed algebra $\Hat{H}(Q,I)$ with a Lie subalgebra \[\Hat{\g}_{\Hall}:=\Hat{H}(Q,I)_{>0}\subset \Hat{H}(Q,I),\]
 and a corresponding pro-unipotent group $\Hat{G}_{\Hall}$, with an identification
 \begin{equation}
 \label{sub}\Hat{G}_{\Hall} \isom \one+\Hat{H}(Q,I)_{>0}\subset \Hat{H}(Q,I).\end{equation}
 
\subsection{}
\label{spelement}
Suppose given a morphism of stacks
\[f\colon X \to \M\]  such that $X$ has affine stabilizers but is not necessarily of finite-type over $\C$. Suppose instead  that for any $d\in N^\oplus$, the closed and open substack $X_d:=f^{-1}(\M_d)$ is of finite type. Then  the induced morphisms
\[ f_{\leq k}\colon \bigcup_{\delta(d)\leq k} X_d \lra \M\]
define compatible elements of the truncations $H(Q,I)_{\leq k}$, and so we obtain an element of the completed Hall algebra, which we abusively denote \[[X\lRa{f}\M]\in \Hat{H}(Q,I).\]
This element can be identified via \eqref{sub} with an element of the group $\Hat{G}_{\Hall}$ 
precisely if the map $f_0\colon X_0\to \M_0$ is an isomorphism.

\subsection{}
\label{puff}
Recall that $H(Q,I)$ is an algebra over the field $\C(t)$. Define a subalgebra
\[\C_\reg(t)=\C[t,t^{-1}][(1+t^2+\cdots +t^{2k})^{-1} :k\geq 1]\subset \C(t).\]
Thus we invert the Poincar{\'e} polynomials of the affine line $\mathbb{A}^1$ and all projective spaces $\mathbb{P}^k$. Let
\begin{equation}\label{guff}H_{\reg}(Q,I)\subset H(Q,I)\end{equation}
denote the $\C_\reg(t)$--submodule  generated by symbols \eqref{obvious2} with $X$ a variety. Note that by definition $H_{\reg}(Q,I)$ is a \emph{free}  $\C_\reg(t)$-module.

\begin{thm}
\label{co}
The subset \eqref{guff}   is closed under the convolution product on $H(Q,I)$ and is therefore a $\C_\reg(t)$-algebra. Moreover the quotient \[H_{\sc}(Q,I)=H_{\reg}(Q,I)\big/\,(t^2-1)H_{\reg}(Q,I)\] is commutative.
\end{thm}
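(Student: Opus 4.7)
The plan is to follow the motivic Hall algebra formalism of Joyce \cite{joyce1}, adapting the treatment of coherent sheaves in \cite[Section 5]{bridgeland2} to the present setting.

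For the closure statement, I would take two generators $[f_i \colon X_i \to \M]$ of $H_\reg(Q,I)$ with each $X_i$ a variety. Their product is $[Z \to \M]$ where $Z = (X_1 \times X_2) \times_{\M \times \M} \M^{(2)}$ and the map to $\M$ is the middle-term morphism $b$. The projection $Z \to X_1 \times X_2$ has fibre at a pair $(E_1, E_3)$ equal to the quotient stack $[\Ext^1(E_3, E_1)/\Hom(E_3, E_1)]$, which factors as $\Ext^1(E_3, E_1) \times B\Hom(E_3, E_1)$ since the action is trivial. As the functions $h = \dim \Hom(E_3, E_1)$ and $e = \dim \Ext^1(E_3, E_1)$ are constructible in $(E_1, E_3)$, one stratifies $X_1 \times X_2$ by finitely many locally closed subvarieties $S_{e,h}$ on which both are constant; on each, the sheaves $\mathcal{H}om$ and $\mathcal{E}xt^1$ form Zariski-locally trivial vector bundles. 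Applying Definition~\ref{rel}(b) together with $[\mathbb{A}^r] = t^{2r}$ and $[B\mathbb{A}^h] = t^{-2h}$ in $K(\St/\C)$ expresses the contribution of $S_{e,h}$ to $[Z \to \M]$ as $t^{2(e-h)} \cdot [\bar Z_{e,h} \to \M]$, where $\bar Z_{e,h}$ is the total space of $\mathcal{E}xt^1|_{S_{e,h}}$ mapping to $\M$ by the middle-term. Since $t^{2(e-h)} \in \C[t,t^{-1}] \subset \C_\reg(t)$, summing over strata shows $[f_1] * [f_2] \in H_\reg(Q,I)$.

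For commutativity of $H_\sc$, I would split $[f_1]*[f_2]$ over each stratum $S_{e,h}$ into contributions from the zero section of $\mathcal{E}xt^1|_{S_{e,h}}$ and from its complement. The zero section corresponds to trivial extensions with middle term $E_1 \oplus E_3$, contributing $t^{-2h} \cdot [S_{e,h} \to \M]$ with the map $(E_1, E_3) \mapsto E_1 \oplus E_3$. Modulo $(t^2-1)$ the scalar $t^{-2h}$ reduces to $1$, and the resulting class is manifestly symmetric under swapping $X_1, X_2$. The complementary contribution equals $t^{-2h}(t^2-1)[\mathbb{P}^{e-1}] \cdot [\bar W_{e,h} \to \M]$, using the identity $[\mathbb{A}^e \setminus \{0\}] = (t^2-1)[\mathbb{P}^{e-1}]$ together with the fact that the middle-term map descends through the $\mathbb{G}_m$-action on $\Ext^1 \setminus \{0\}$; this contribution is plainly divisible by $(t^2-1)$. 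The same decomposition applied to $[f_2]*[f_1]$ likewise has a symmetric trivial-extension part and a $(t^2-1)$-divisible non-trivial part, so the commutator $[f_1]*[f_2] - [f_2]*[f_1]$ lies in $(t^2-1) H_\reg(Q,I)$, establishing commutativity of $H_\sc$.

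The main obstacle is verifying the Zariski-local triviality of the bundles $\mathcal{H}om$ and $\mathcal{E}xt^1$ on the strata $S_{e,h}$, which is what allows the application of Definition~\ref{rel}(b). This follows from the presentation $\M_d = [\Rep(d)/\GL(d)]$ established in Section~4: the sheaves $\mathcal{H}om$ and $\mathcal{E}xt^1$ arise as the cohomology of an explicit two-term complex of vector bundles on $\Rep(d_1) \times \Rep(d_3)$, and stratifying by the rank of its differential yields local trivializations after further refinement of the $S_{e,h}$. This is the standard semicontinuity argument and presents no conceptual difficulty; the more conceptually interesting point is the $[\mathbb{G}_m] = t^2 - 1$ mechanism that makes the commutativity clean.
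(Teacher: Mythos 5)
Your proof is correct and is essentially the paper's own argument: the paper proves Theorem \ref{co} simply by remarking that it goes exactly as \cite[Theorem 5.1]{bridgeland2} (with $\Upsilon(\L)=t^2$), and your stratification of $X_1\times X_2$ by $\dim\Hom$ and $\dim\Ext^1$, the identification of the fibre $[\Ext^1(E_3,E_1)/\Hom(E_3,E_1)]$, the split into trivial versus non-trivial extensions, and the $[\mathbb{G}_m]=t^2-1$ divisibility are precisely that argument transported from coherent sheaves to quiver representations. Two harmless bookkeeping slips: the contribution of a stratum is $t^{-2h}\cdot[\bar Z_{e,h}\to\M]$ rather than $t^{2(e-h)}\cdot[\bar Z_{e,h}\to\M]$ (the $\Ext^1$-fibres are not collapsed, since the middle term varies along them), the non-trivial-extension part is $t^{-2h}(t^2-1)\cdot[\mathbb{P}(\mathcal{E}xt^1|_{S_{e,h}})\to\M]$ with no extra $[\mathbb{P}^{e-1}]$ factor, and over $\CQ/I$ one computes $\Ext^1$ from a three-term (not two-term) complex coming from a projective presentation including the relations -- none of which affects closure under $*$ or commutativity modulo $(t^2-1)$.
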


\begin{proof}
This is proved exactly as in \cite[Theorem 5.1]{bridgeland2}. Note that the Poincar{\'e} polynomial of the affine line is $\Upsilon(\L)=t^2$. 
\end{proof}

This result has two closely related consequences:

\begin{itemize}
\item[(i)]The Poisson bracket on $H(Q,I)$ defined by
\begin{equation}
\label{brack}\{a,b\}=(t^2-1)^{-1}\cdot [a,b].\end{equation}
 induces a Poisson bracket on the subring $H_{\reg}(Q,I)$. 
\item[(ii)]The subspace
\begin{equation}
\label{yes}
\g_{\reg}:=(t^2-1)^{-1} \cdot H_{\reg}(Q,I)_{>0} \subset \g_{\Hall}= H(Q,I)_{>0}\end{equation}
is a Lie subalgebra. 

\end{itemize}

Note that $H_{\reg}(Q,I)_{>0}$ viewed as a Lie algebra via its Poisson bracket is isomorphic to the Lie algebra $\g_\reg$ via the  map $x\mapsto (t^2-1)^{-1}\cdot x$.

%

%

\subsection{}
Continuing with the notation of the last section, we can pass to completions and define a Lie algebra
\[\Hat{\g}_{\reg}:= (t^2-1)^{-1} \cdot \Hat{H}_{\reg}(Q,I)_{>0}\subset \Hat{H}(Q,I)_{>0}.\]
The corresponding pro-unipotent group 
$\Hat{G}_{\reg} \subset \Hat{G}_{\Hall}$ has an identification
\begin{equation}
\label{dastardly}
\Hat{G}_\reg=\exp\big((t^2-1)^{-1}\cdot \Hat{H}_\reg(Q,I)_{>0}\big)\subset \one+\Hat{H}(Q,I)_{>0}.\end{equation}

In Section \ref{load} we shall make use of the following deep theorem of Joyce.  

\begin{thm}[Joyce]
\label{biggy}
For any $\theta\in M_\R$ the element of $\Hat{G}\subset \Hat{H}(Q,I)$ defined by the inclusion of the open substack of $\theta$-semistable objects \[\M_{\ss}(\theta)\subset \M\]   corresponds via the identification \eqref{dastardly} to an element $1_\ss(\theta)\in \Hat{G}_\reg$.
\end{thm}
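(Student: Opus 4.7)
The plan is to verify Joyce's ``no-pole'' theorem: that $\log(1_\ss(\theta))\in\Hat{H}(Q,I)_{>0}$ has at most a simple pole at $t^2=1$, and that the residue is represented by families of varieties, so that $(t^2-1)\log(1_\ss(\theta))\in\Hat{H}_\reg(Q,I)_{>0}$. The statement is highly nontrivial because the unrestricted stack functions $[\M_d\subset\M]=[\Rep(d)/\GL(d)]$, from which $1_\ss(\theta)$ is built, already carry poles at $t^2=1$ of order equal to the number of factors of $(t^2-1)$ in the Poincar\'e polynomial of $\GL(d)$, so massive cancellations must occur in the logarithm.

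The starting point is the Harder--Narasimhan identity. Stratifying $\M_d$ by HN type yields, for each $d\in N^+$, an identity
$$[\M_d\subset\M]=\sum_{(d_1,\ldots,d_r)} [\M_{d_1}^{\ss}(\theta)\subset\M]\ast\cdots\ast[\M_{d_r}^{\ss}(\theta)\subset\M]$$
in the Hall algebra, summed over compositions of $d$ with strictly decreasing slopes. Writing $1_{\mathrm{tot}}=\sum_d[\M_d\subset\M]$, this is a lower-triangular, invertible system relating $1_{\mathrm{tot}}$ and $1_\ss(\theta)$. M\"obius inversion, together with the Baker--Campbell--Hausdorff expansion of $\log$ in the completed associative algebra, yields an explicit formula
$$\log(1_\ss(\theta))=\sum c(d_1,\ldots,d_r;\theta)\cdot [\M_{d_1}\subset\M]\ast\cdots\ast[\M_{d_r}\subset\M]$$
with rational combinatorial coefficients $c$ depending only on the pattern of slopes of the $d_i$.

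The heart of the argument is Joyce's combinatorial lemma, asserting a specific antisymmetry of the coefficients $c$ under transposition of adjacent summands of equal slope. This antisymmetry allows the entire sum to be recast as a $\Q$-linear combination of iterated commutators of the unrestricted pieces $[\M_{d_i}\subset\M]$. Since $H_\reg/(t^2-1)H_\reg$ is commutative by Theorem \ref{co}, each commutator of $H_\reg$-elements is divisible by $(t^2-1)$; a further inductive analysis shows that iterated commutators of the $[\M_{d_i}\subset\M]$ successively cancel factors of $(t^2-1)$ coming from the stabiliser denominators, leaving only a simple pole at $t^2=1$ with residue in $\Hat{H}_\reg$.

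The principal obstacle is precisely Joyce's combinatorial antisymmetry of the coefficients $c$: this is subtle and is proved by an induction on sequence length with delicate sign book-keeping, and constituted the main innovation of his Hall-algebra papers. For the complete argument I would follow \cite{joyce1,js}, and the coherent-sheaves proof in \cite[Theorem 5.1]{bridgeland2} transplants essentially verbatim to the quiver setting considered here.
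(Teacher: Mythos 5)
The paper does not actually prove Theorem \ref{biggy}: it is imported as a deep theorem of Joyce, with \cite[Section 6.3]{bridgeland3} cited as the place to find precise references to Joyce's papers. So the only question is whether your sketch would reconstruct Joyce's argument. Your opening steps are right in outline: the Harder--Narasimhan identity, its inversion, and the logarithm do express $\log 1_\ss(\theta)$ as a sum over sequences $(d_1,\ldots,d_r)$ of products $[\M_{d_1}\subset\M]*\cdots*[\M_{d_r}\subset\M]$ with rational coefficients depending only on the pattern of the slopes; this is exactly Joyce's formula, and you correctly locate the difficulty in the combinatorial properties of these coefficients.

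The genuine gap is in your pole-cancellation mechanism. The commutativity of $H_{\sc}(Q,I)=H_\reg/(t^2-1)H_\reg$ (Theorem \ref{co}) gives divisibility of a commutator by $(t^2-1)$ only for elements that are already regular, and the classes $[\M_d\subset\M]$ are very far from regular: the pole order of $[\M_d]$ at $t^2=1$ equals $\delta(d)$, the number of factors of $(t^2-1)$ in $\Upsilon([\GL(d)])$. Even granting that the degree-$d$ part of $\log 1_\ss(\theta)$ could be rewritten as a combination of iterated commutators in $n$ letters $[\M_{d_1}],\ldots,[\M_{d_n}]$, the filtration argument you invoke only bounds the pole order by $\sum_i\delta(d_i)-(n-1)=\delta(d)-n+1$, which is $1$ only when every letter has total dimension one, i.e.\ is a vertex simple; the M\"obius-inverted HN expression is not of that form, so ``iterated commutators successively cancel the stabiliser poles'' cannot deliver the simple-pole conclusion. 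Joyce's actual proof runs differently: one shows, via the delicate identities satisfied by his coefficients, that the element $\epsilon(d,\theta)$ is \emph{supported on virtual indecomposables} in his stack-function calculus, and then a separate structure theorem asserts that any such stack function can be represented as a sum of classes $[U\times[\Spec\C/\C^*]\to\M]$ with $U$ a variety, which is precisely an element of $(t^2-1)^{-1}H_\reg$ since $\Upsilon$ of the $\C^*$-stabiliser contributes exactly one factor $(t^2-1)^{-1}$. Neither of these two inputs appears in your sketch, which defers them wholesale to the references; note also that \cite[Theorem 5.1]{bridgeland2} is the regular-subalgebra/semi-classical statement (reproduced here as Theorems \ref{co} and \ref{mine}) and does not contain the no-poles theorem, so it is not the result that ``transplants'' to give Theorem \ref{biggy}; the correct sources are Joyce's configurations papers, as routed through \cite[Section 6.3]{bridgeland3}.
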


The reader can consult \cite[Section 6.3]{bridgeland3} for precise references for this result.

%


 \section{The Hall algebra scattering diagram}
 \label{hallscatter}

In this section we construct a canonical scattering diagram associated to the category of finite-dimensional representations of a quiver with relations. It takes values in the Lie algebra $\g_{\Hall}$ defined in the last section. The walls correspond to choices of weights $\theta\in M_\R$ for which there exist nonzero $\theta$-semistable representations.

\subsection{}
\label{sixa}
Let $(Q,I)$ be a quiver with relations as in Section \ref{quivnot} and take notation as defined there.  The following definition is due to King \cite{king}.\footnote{Note however that King takes the inequality of Definition \ref{bobob}(ii) to be $\theta(A)\geq 0$. In the present context our convention seems to be preferable since it avoids the introduction of an extra sign in Section \ref{alal}.}
\begin{defn}
\label{bobob}
Given  $\theta\in M_\R$,  an object $E\in \A$ is said to be \emph{$\theta$-semistable} if 
\begin{itemize}
\item[(i)] $\theta(E)=0$,
\item[(ii)]  every subobject $A\subset E$ satisfies $\theta(A)\leq 0$.
\end{itemize}
\end{defn}

For $k\geq 1$ we define  $\A_{\leq k}\subset \A$ to be the full subcategory consisting of representations of total dimension  $\leq k$.
We  define a subset
\[\W_{k}=\big\{\theta\in M_\R: \text{there exist $\theta$-semistable objects $0\neq E\in \A_{\leq k}$}\big\}\subset M_\R.\]
The following result implies in particular that $\W_{k}\subset M_\R$ is closed.
\begin{lemma}
\label{clang}
There is a cone complex $\S_{k}$ in $M_\R$ such that
\[\W_{k}=\supp(\S_{k}).\]
\end{lemma}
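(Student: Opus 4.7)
The plan is to build $\S_k$ by coarsening the cone complex $\S(P)$ of Example \ref{easy}, applied to a suitable finite subset $P \subset N^+$, and then to exhibit $\W_k$ as the support of the subcomplex of those cones which meet the semistable locus.

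First I would note that the set $P_k = \{d \in N^+ : \delta(d) \leq k\}$ is finite, since for each fixed total dimension there are only finitely many possible dimension vectors. This set has the crucial property that it contains both the dimension vector of every object $E \in \A_{\leq k}$ and the dimension vector of every subobject of such an $E$ (as any subobject automatically has total dimension $\leq k$). Next I would form the cone complex $\S(P_k)$ as in Example \ref{easy}: its cones $\sigma(P_+, P_0, P_-)$ are indexed by partitions of $P_k$, and a point $\theta \in M_\R$ lies in the relative interior of such a cone precisely when $\theta(n) = 0$ on $P_0$ and $\pm \theta(n) > 0$ on $P_\pm$.

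The key observation is that $\theta$-semistability of an object $E \in \A_{\leq k}$ depends only on the signs of $\theta(d')$ for $d' \in P_k$, and hence is constant as $\theta$ ranges over the relative interior of a cone in $\S(P_k)$. More precisely, suppose $\theta_0$ lies in the relative interior of $\sigma = \sigma(P_+, P_0, P_-)$ and that some nonzero $E \in \A_{\leq k}$ of dimension vector $d$ is $\theta_0$-semistable. Then $d \in P_0$, and for each subobject $A \subset E$ with $d' = d(A) \in P_k$ we have $d' \in P_0 \cup P_-$. For any $\theta$ in the closed cone $\sigma$ this forces $\theta(d) = 0$ and $\theta(d') \leq 0$, so $E$ is still $\theta$-semistable. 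Thus if the relative interior of $\sigma$ meets $\W_k$, the entire closed cone $\sigma$ lies in $\W_k$.

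I would then define $\S_k \subset \S(P_k)$ to be the collection of those cones $\sigma$ which are contained in $\W_k$. By the above, $\supp(\S_k) = \W_k$. It remains to check that $\S_k$ is a cone complex: closure under faces holds because any face of a $\sigma \in \S_k$ lies in $\sigma \subset \W_k$ and is itself an element of $\S(P_k)$; closure under intersections is inherited directly from $\S(P_k)$. The only mildly subtle point in the whole argument is precisely this closure-under-faces property, which in turn hinges on the observation above that $\sigma \subset \W_k$ whenever $\text{relint}(\sigma) \cap \W_k \neq \emptyset$; once that is in hand the rest is formal.
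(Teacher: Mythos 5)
Your proposal is correct and follows essentially the same route as the paper: take $P=\{n\in N^+:\delta(n)\leq k\}$, form $\S(P)$ as in Example \ref{easy}, observe that $\theta$-semistability of objects of $\A_{\leq k}$ is constant on (relative interiors of) cones, and pass to the subcomplex of cones supporting nonzero semistables. The only difference is that you spell out explicitly the closed-cone/face argument (that $\operatorname{relint}(\sigma)\cap\W_k\neq\emptyset$ forces $\sigma\subset\W_k$), which the paper leaves implicit.
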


\begin{proof}
 First consider the finite subset
\[ P=\{n\in N^+\colon \delta(n)\leq k\}\subset N^+,\]
and the corresponding cone complex $\S(P)$ of Example \ref{easy}. Note that in the relative interior of each cone of $\S(P)$ the question of the semistability of any given object of $\A_{\leq k}$ is constant. Thus  we can define a subcomplex consisting of those cones which support nonzero semistable objects in $\A_{\leq k}$. The support of this subcomplex is precisely $\W_{k}$.
\end{proof}

 \subsection{}
 Given $\theta\in M_\R$ we define a \emph{stability function} $Z\colon K_0(\A)\to \C$ by the formula
 \[Z(E)=-\theta(E)+i \delta(E).\]
 Note that if $0\neq E\in\A$ then $Z(E)\in \mathcal{H}$ lies in the upper half-plane, and we can define the \emph{phase} of $E$ by
 \[\phi(E)=(1/\pi)\arg Z(E).\]
 We then define an object $0\neq E\in \A$ to be $Z$-semistable if every nonzero subobject $A\subset E$ satisfies $\phi(A)\leq \phi(E)$.  The usual argument shows that if $E_1,E_2\in \A$ are $Z$-semistable with phases $\phi_1,\phi_2$ respectively then
 \begin{equation}
 \label{no}\phi_1>\phi_2\implies \Hom_\A(E_1,E_2)=0.\end{equation}
 Note that a nonzero object $ E\in \A$ is $\theta$-semistable precisely if it is $Z$-semistable with phase $1/2$.
 
 It is an elementary fact that every nonzero object $E\in \A$ has a unique Harder-Narasimhan filtration
 \[0=E_0\subset E_1\subset \cdots \subset E_{m-1}\subset E_m=E,\]
 whose factors $F_i=E_i/E_{i-1}$ are $Z$-semistable with descending phase:
 \[\phi(F_1)> \phi(F_2)>\cdots >\phi(F_m).\]
 For any interval $I\subset (0,1)$ we define $\P(I)\subset \A$ to be the full additive subcategory consisting of objects all of whose Harder-Narasimhan factors $F_i$ have phases in $I$. In particular, if $\phi\in (0,1)$ then $\P(\phi)=\P(\{\phi\})$ denotes the subcategory of $Z$-semistable objects of phase $\phi$.

\subsection{}

Fix an element $\theta\in M_\R$ and consider the stability function $Z$ as  in the last section.
\begin{lemma}
\label{witch}
For any interval $I\subset (0,1)$ there is an open substack
$\M_I(\theta)\subset \M$ parameterising representations of $(Q,I)$  lying in the full subcategory $\P(I)\subset \A$.
\end{lemma}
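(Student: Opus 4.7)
The plan is to realize $\M_I(\theta)$ as an open substack of $\M$ by exhibiting its complement, one dimension vector at a time, as a finite union of closed substacks obtained as images under the proper map $b$ of Lemma~\ref{call}(b). Since $\M = \coprod_{d\in N^\oplus} \M_d$ and the stability function $Z$ depends only on dimension vectors, it suffices to construct an open $\M_{I,d}(\theta) \subset \M_d$ for each $d$. Because the Harder--Narasimhan phases of an object $E$ of dimension $d$ form a strictly decreasing sequence $\phi(F_1) > \cdots > \phi(F_m)$ and $I$ is convex, the condition $E \in \P(I)$ reduces to the two inequalities
\[
\phi_{\max}(E) \leq \sup I \quad\text{and}\quad \phi_{\min}(E) \geq \inf I,
\]
with strict or non-strict versions according as the relevant endpoint is excluded from or included in $I$; here $\phi_{\max}(E) = \phi(F_1)$ and $\phi_{\min}(E) = \phi(F_m)$.

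The geometric input is a semicontinuity statement: for any $d' \in N^\oplus$ with $0 < d' \leq d$, the locus in $\M_d$ of representations admitting a subobject (respectively, a quotient) of dimension $d'$ is closed. I would derive this from Lemma~\ref{call}(b). The morphism $b \colon \M^{(2)} \to \M$ is representable and proper, and its restriction to the locally closed substack $(a_1,a_2)^{-1}(\M_{d'}\times \M_{d-d'}) \subset \M^{(2)}$ remains proper with image contained in $\M_d$; hence the image of this proper map is closed in $\M_d$, and it coincides precisely with the locus of representations admitting such a subobject (dually, a quotient of dimension $d - d'$).

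The condition $\phi_{\max}(E) \leq \sup I$ fails iff $E$ admits a subobject of some dimension vector $d'$ for which $\phi(d')$ violates the bound, and only finitely many $d' \in N^\oplus$ satisfy $0 < d' \leq d$. The failure locus is therefore a finite union of closed substacks, hence closed, so its complement is open in $\M_d$. The same reasoning applied to quotients handles the condition on $\phi_{\min}(E)$, and intersecting the two resulting open substacks produces $\M_{I,d}(\theta)$. The only bit requiring care is the bookkeeping distinguishing whether the endpoints of $I$ lie in $I$, which toggles certain strict and non-strict inequalities but does not affect the structure of the argument; no step presents a genuine obstacle once Lemma~\ref{call}(b) is in hand.
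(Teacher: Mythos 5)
Your proposal is correct and follows essentially the same route as the paper: reduce to a fixed dimension vector $d$, observe that membership in $\P(I)$ is equivalent to excluding subobjects (equivalently quotients) from the finitely many dimension vectors $d'\leq d$ whose phases violate the bounds, and conclude that each such exclusion is an open condition because the locus of representations admitting a sub/quotient of given dimension vector is the image of a closed substack of $\M^{(2)}$ under the proper morphism $b$ of Lemma \ref{call}(b). Your treatment of $\phi_{\max}$ and $\phi_{\min}$ just makes explicit the reduction to subobject conditions that the paper leaves implicit.
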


\begin{proof}
It is enough to prove the same statement for the stack $\M_d$ parameterising representations of a fixed dimension vector $d\in N^\oplus$. There are then only finitely many elements $n\in N^\oplus$  occurring as possible dimension vectors of subobjects. Thus it is enough to know that for any $n\in N^\oplus$ there is an open substack of $\M_d$ parameterizing representations having no subobject of type $n$. 
Consider the diagram \eqref{ba} and the closed substack $a_1^{-1}(\M_{n})$. The result then follows from the properness of the morphism $b$. \end{proof}

Note that the zero object of $\A$ lies in the category $\P(I)$ for any interval $I$. Thus $\M_I(\theta)\cap \M_0=\M_0$.  As in Section \ref{spelement}, the substack of Lemma \ref{witch} defines an element \[1_{I}(\theta)=[\M_I(\theta)\subset \M] \in \Hat{G}_{\Hall}\subset \Hat{H}(Q,I).\]
A particular case of this  corresponding to the interval $I=\{\halft\}$ is
\[1_\ss(\theta)=[\M_{\ss}(\theta)\subset \M]\in \Hat{G}_{\Hall}.\]
In the case of the interval $I=(0,1)$ we use the notation
\[1_\A = 1_{(0,1)} = [\M\lRa{\id}\M]\in \Hat{G}_{\Hall}.\]
This should not be confused with the identity element $\one=[\M_0\subset \M]$.

\subsection{}
The existence and uniqueness of the Harder-Narasimhan filtration gives rise to an important identity in the Hall algebra $H(Q,I)$. It is this identity that is responsible for the link between stability conditions and scattering diagrams.

\begin{prop}
\label{inty}
Fix $\theta\in M_\R$ and take an interval $I\subset (0,1)$ which  is a disjoint union of intervals $I_1,I_2$. Suppose that $I_1>I_2$ in the obvious sense. Then there is an identity \[1_I(\theta)=1_{I_1}(\theta) * 1_{I_2}(\theta)\in \Hat{G}_\Hall.\]
\end{prop}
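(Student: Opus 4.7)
The plan is to unfold both sides as classes of explicit moduli stacks and then invoke the observation of Section~\ref{sunny}: an equality in $K(\St/\M)$ follows from a morphism of stacks over $\M$ that induces an equivalence on $\C$-valued points. Concretely, by the definition of the convolution product, the element $1_{I_1}(\theta) * 1_{I_2}(\theta)$ is represented by the stack $Z$ fitting into the Cartesian square
\[
\begin{CD}
Z @>>> \M^{(2)} @>b>> \M \\
@VVV @VV(a_1,a_2)V \\
\M_{I_1}(\theta) \times \M_{I_2}(\theta) @>>> \M \times \M,
\end{CD}
\]
so that $Z$ parameterizes short exact sequences $0 \to E_1 \to E \to E_2 \to 0$ with $E_1 \in \P(I_1)$, $E_2 \in \P(I_2)$, and the map to $\M$ remembers only the middle term $E$. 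Meanwhile $1_I(\theta)$ is represented by the inclusion $\M_I(\theta) \subset \M$. I would construct a morphism $f \colon Z \to \M_I(\theta)$ over $\M$ sending a short exact sequence to its middle term, and show $f$ is an equivalence on $\C$-points.

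To see that $f$ lands in $\M_I(\theta)$ I would show that $\P(I)$ is extension-closed: if $E_1,E_2 \in \P(I)$ sit in a short exact sequence, then $E \in \P(I)$. This is a standard consequence of the $\Hom$-vanishing \eqref{no}: any HN factor of $E$ of highest phase $\psi$ must map either into $E_1$ or onto a nonzero HN quotient of $E_2$; in either case the phase $\psi$ is bounded above by a phase occurring in either $E_1$ or $E_2$, hence $\psi \leq \sup I$. The dual argument gives the lower bound for the lowest HN factor, and since HN phases interlace, all HN factors have phases in $I$.

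The inverse correspondence on $\C$-points is exactly the content of the Harder--Narasimhan filtration. Given $E \in \P(I)$ with HN factors of descending phases $\phi_1 > \cdots > \phi_m$, all lying in $I = I_1 \sqcup I_2$ with $I_1 > I_2$, there is a unique index $p$ such that $\phi_1,\ldots,\phi_p \in I_1$ and $\phi_{p+1},\ldots,\phi_m \in I_2$. The corresponding step $E_p \subset E$ of the HN filtration is the unique subobject with $E_p \in \P(I_1)$ and $E/E_p \in \P(I_2)$, giving a canonical exact sequence and hence a canonical point of $Z$ lying above $E$. Uniqueness of the HN filtration also forces every automorphism of $E$ to preserve $E_p$, so $\Aut(E) = \Aut\bigl(0 \to E_p \to E \to E/E_p \to 0\bigr)$; thus $f$ is an equivalence of groupoids on $\C$-points.

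Finally, to legitimately apply the principle of Section~\ref{sunny} I would work at each fixed total dimension $d \in N^\oplus$, where the restrictions of $Z$ and $\M_I(\theta)$ are of finite type over $\C$ with affine stabilisers, and deduce the equality of classes in the truncated Hall algebra $H(Q,I)_{\leq k}$; passing to the inverse limit gives the required identity in $\Hat{H}(Q,I)$, and hence in the subset $\Hat{G}_{\Hall}$ via the identification \eqref{sub}. The main subtlety to watch is the automorphism match-up on $\C$-points (handled by HN uniqueness); once that is in place, the rest is bookkeeping with the definition of the convolution product.
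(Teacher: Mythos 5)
Your proof is correct and rests on the same mechanism as the paper's own argument: uniqueness of the (coarsened) Harder--Narasimhan filtration combined with the $\C$-valued-point equivalence principle of Section \ref{sunny}, applied to a stack of filtered objects mapping to $\M_I(\theta)$ by forgetting the filtration. The only real difference is that you compare $1_{I_1}(\theta)*1_{I_2}(\theta)$ with $1_I(\theta)$ directly via the two-step stack $\M^{(2)}$ (hence need to note extension-closure of $\P(I)$ and uniqueness of the two-step decomposition), whereas the paper first establishes the finer factorization $1_I(\theta)=\prod_{\phi\in I}1_\phi(\theta)$ of \eqref{edin} using the $r$-step filtration stack and then groups phases.
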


\begin{proof}
 The result follows immediately from the statement that for any interval $I\subset (0,1)$ there is an identity  \begin{equation}
 \label{edin}1_I(\theta)=\prod_{\phi\in I} 1_{\phi}(\theta)\in \Hat{H}(Q,I),\end{equation}
 where the product over phases is taken in descending order. Note that the right hand side of \eqref{edin} makes good sense because for a fixed $k\geq 1$ there are only  finitely many $\phi\in I$ for which there exists a nonzero semistable object in $\A_{\leq k}$ of phase $\phi$. Thus when projected to $H(Q,I)_{\leq k}$, all but finitely many factors are the identity.
 
To prove \eqref{edin} let us indeed project it to $H(Q,I)_{\leq k}$, where it takes the form
\begin{equation}
\label{su}1_I(\theta)= 1_{\phi_1}(\theta)*1_{\phi_2}(\theta)*\cdots 1_{\phi_r}(\theta),\end{equation}
for some finite sequence of possible phases $\phi_1>\phi_2>\cdots>\phi_r$. Recall that the $r$-fold product in the Hall algebra is given by convolution using the  stack $\M^{(r)}$ parameterizing objects of $\rep(Q,I)$ equipped with an $r$-step filtration (see \cite[Lemma 4.4]{bridgeland2}).
The right hand side of \eqref{su}  is  therefore represented by the open substack $\N\subset \M^{(r)}$
consisting of  filtered objects  which when pulled back to any $\C$-valued point give a filtration by semistable objects of the given phases $\phi_i$. Since any object in $\P(I)\cap\A_{\leq k}$ has a unique such filtration (the Harder-Narasimhan filtration extended to length $r$ by inserting zero factors), it follows that the obvious map of stacks \[\N\cap b^{-1}(\M_{\leq k})\to \M_{I}(\theta)\cap \M_{\leq k}\] forgetting the filtrations induces an equivalence on $\C$-valued points. The identity then follows from the remark of Section \ref{sunny}.
\end{proof}

\subsection{}
\label{bustfinger}

The following result gives the fundamental link between stability conditions and scattering diagrams.

 \begin{thm}
\label{hallsc}
There is a consistent scattering  diagram $\DD$ in $M_\R$ taking values in $\g_\Hall$  and with the following properties:\begin{itemize}
\item[(a)]
The support  consists of those elements $\theta\in M_\R$ for which there exist nonzero $\theta$-semistable objects in $\A$.\smallskip

\item[(b)]  The wall-crossing automorphism at a general point $\theta\in \dd\subset \supp(\DD)$ is 
\[\Phi_\DD(\dd)=1_{\ss}(\theta)\in \Hat{G}_\Hall.\]

\end{itemize}
\end{thm}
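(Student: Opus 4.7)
The plan is to apply the Kontsevich--Soibelman reconstruction of Proposition \ref{ks2} to the ``total'' element $1_\A=[\M\lRa{\id}\M]\in\Hat{G}_\Hall$, and then to use the Harder--Narasimhan identity from Proposition \ref{inty} to read off the walls and wall-crossing automorphisms of the resulting scattering diagram. The key conceptual point is that the HN factorisation of $1_\A$ at a fixed $\theta\in M_\R$ has exactly the shape $g_+\cdot g_0\cdot g_-$ of the unique triangular decomposition from Section \ref{han} that underlies the reconstruction, and uniqueness then forces the two decompositions to coincide.

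I would construct the scattering diagram one truncation level at a time. For fixed $k\geq 1$, let $P_k=\{n\in N^+:\delta(n)\leq k\}$ and consider the cone complex $\S(P_k)$ from Example \ref{easy}. The nilpotent Lie algebra $\g_{\leq k}$ decomposes as $\bigoplus_{n\in P_k}(\g_\Hall)_n$, so Proposition \ref{ks} applies. Setting $g_k:=\pi_{\leq k}(1_\A)\in G_{\leq k}$, Lemma \ref{surj} delivers a consistent $\g_{\leq k}$-complex $\DD_k:=\DD(g_k)$ supported on $\S(P_k)$, whose wall-crossing element on a wall $\dd$ is $\Pi_0^\theta(g_k)$ at any $\theta$ in the relative interior.

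The crux is to identify $\Pi_0^\theta(g_k)$ with $\pi_{\leq k}(1_\ss(\theta))$. Two applications of Proposition \ref{inty} to the decomposition $(0,1)=(1/2,1)\sqcup\{1/2\}\sqcup(0,1/2)$ produce
\[
1_\A = 1_{(1/2,1)}(\theta)*1_\ss(\theta)*1_{(0,1/2)}(\theta)\in\Hat{G}_\Hall.
\]
Since phase $>1/2$ for the stability function $Z=-\theta+i\delta$ is equivalent to $\theta(-)>0$ on dimension vectors (and similarly phase $<1/2$ gives $\theta(-)<0$), the three factors lie in $\Hat{G}_+(\theta)$, $\Hat{G}_0(\theta)$ and $\Hat{G}_-(\theta)$ respectively. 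Projecting to $G_{\leq k}$ and invoking the uniqueness of the triangular decomposition \eqref{def} immediately yields $\Pi_0^\theta(g_k)=\pi_{\leq k}(1_\ss(\theta))$, which is part (b). A wall $\dd\in\S(P_k)$ therefore lies in the essential support of $\DD_k$ exactly when $1_\ss(\theta)$ is non-trivial modulo $\g_\Hall^{>k}$, i.e.\ when there is a nonzero $\theta$-semistable object in $\A_{\leq k}$; by Lemma \ref{clang} this locus is $\supp(\S_k)$, and part (a) follows by taking the union over $k$.

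Finally, to assemble the $\DD_k$ into a $\Hat{\g}_\Hall$-complex, the compatibility $\pi^{ji}_*(\DD_j)\equiv\DD_i$ is forced by the uniqueness half of Proposition \ref{ks}, since both $\g_{\leq i}$-complexes have the same associated element $\pi_{\leq i}(1_\A)$. The main obstacle is really the central identification of the Harder--Narasimhan factorisation with the triangular decomposition on a wall; once this matching is in place, everything else follows formally from the reconstruction machinery of Section \ref{reconstruct}.
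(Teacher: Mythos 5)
Your proposal is correct and follows essentially the same route as the paper: the paper likewise applies the reconstruction machinery of Section \ref{reconstruct} to the element $1_\A$, uses Proposition \ref{inty} to obtain $1_\A=1_{(\halft,1)}(\theta)*1_\ss(\theta)*1_{(0,\halft)}(\theta)$, identifies the factors with the triangular decomposition at $\theta$ to get $\Pi_0^\theta(1_\A)=1_\ss(\theta)$, and then passes to the subcomplex of Lemma \ref{clang} for part (a). Your level-by-level treatment of the truncations and the compatibility of the $\DD_k$ simply spells out what the paper leaves implicit.
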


\begin{proof} 
Lemma \ref{inty} shows that for any $\theta\in (0,1)$ we have a relation in $\Hat{G}_\Hall$
\begin{equation}
\label{crux}1_\A=1_{(\halft,1)}(\theta) * 1_{\ss}(\theta) * 1_{(0,\halft)}(\theta).\end{equation}
Let us fix $k\geq 1$ and consider the nilpotent Lie algebra $\g=\g_{\Hall,\leq k}$ and the corresponding unipotent group $G=G_{\Hall,\leq k}$. 
It is easy to see that, using the notation of Section \ref{threeone}, for any phase $\phi\in (0,1)$ we have
\[\pi_{\leq k} (1_\ss(\phi))\in  \left\{\begin{array}{lll} \vspace{.2em}

G_+(\theta) &\text{ if $\phi>\halft$} \\ \vspace{.2em}

G_0(\theta) &\text{ if $\phi=\halft$} \\ 

G_-(\theta) &\text{ if $\phi<\halft$.}
\end{array} \right.\]\
Equation \eqref{crux} therefore gives identities
\begin{equation}
\label{readit}\Pi_+^\theta(1_\A)=1_{(\halft,1)}(\theta), \quad  \Pi_0^\theta(1_\A)=1_{\ss}(\theta), \quad \Pi_-^\theta(1_\A)= 1_{(0,\halft)}(\theta).\end{equation}
The general construction of Section \ref{reconstruct} then gives a consistent scattering diagram satisfying (b). Passing to a subcomplex  as in the proof of Lemma \ref{clang} we can ensure that it also satisfies (a).
\end{proof}

The scattering diagram of Theorem \ref{hallsc} is  is clearly unique up to equivalence. We call it the \emph{Hall algebra scattering diagram} of $(Q,I)$.
 By construction, the corresponding element  of 
 Proposition \ref{ks2} is 
\[\Phi_\DD=1_\A\in \Hat{G}_\Hall.\] 
Note that Joyce's result Theorem \ref{biggy} is precisely the statement that the stability scattering diagram takes values in the Lie subalgebra $\g_{\reg}\subset \g_{\Hall}$. 

 \subsection{}
 It is interesting and useful to relate our Hall algebra scattering diagram to a certain  torsion pair in $\A$ defined by an element $\theta\in M_\R$. 

\begin{lemma}
\label{strain}
For each $\theta\in M_\R$ there is a torsion pair $(\T(\theta),\F(\theta))\subset \A$ defined by setting
\vspace{-.7em}

\begin{gather*}\T(\theta)=\P(\halft,1)=\{E\in \A:\text{any quotient object }E\onto Q \text{ satisfies $\theta(Q)> 0$}\},\\
\F(\theta)=\P(0,\halft]=\{E\in \A:\text{any subobject }A\subset E \text{ satisfies $\theta(A)\leq 0$}\}.\end{gather*}
\end{lemma}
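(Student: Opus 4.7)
\begin{pf}[Proof plan]
The plan is to prove two things: first, that the two descriptions given for $\T(\theta)$ and for $\F(\theta)$ coincide; second, that the resulting pair is actually a torsion pair in $\A$. Throughout, I would work with the stability function $Z(E)=-\theta(E)+i\delta(E)$ introduced earlier, so that a semistable object $X\neq 0$ has phase $\phi(X)>\halft$ precisely when $\theta(X)>0$, phase $=\halft$ precisely when $\theta(X)=0$, and phase $<\halft$ precisely when $\theta(X)<0$. Combined with additivity of $\theta$ on the HN factors, this translates the phase conditions defining $\P(\halft,1)$ and $\P(0,\halft]$ into sign conditions on $\theta$ applied to the HN factors.

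For the equivalence of descriptions of $\T(\theta)$, first I would observe that if $E\in\P(\halft,1)$ and $E\onto Q$ is a nonzero quotient, then composing with the projection $Q\onto G$ onto the last HN factor of $Q$ (which is a semistable quotient) gives a surjection $E\onto G$. If $\phi(G)\leq \halft$, then $\Hom(F_i,G)=0$ for every HN factor $F_i$ of $E$ by \eqref{no}, hence $\Hom(E,G)=0$ by induction along the HN filtration of $E$; this contradicts the surjectivity of $E\onto G$. Thus every HN factor of $Q$ has phase $>\halft$, so $\theta(Q)=\sum\theta(G_j)>0$. Conversely, the last HN factor of $E$ is itself a quotient of $E$, so the condition $\theta(Q)>0$ for all quotients forces its phase to exceed $\halft$; since it has the smallest phase, all phases exceed $\halft$ and $E\in\P(\halft,1)$. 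The description of $\F(\theta)$ is strictly dual: the first HN factor is a subobject, and one runs the same argument using that every subobject $A\subset E$ has a semistable subobject of maximum phase.

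For the torsion pair axioms, the orthogonality $\Hom(\T(\theta),\F(\theta))=0$ follows from \eqref{no}: for $T\in\P(\halft,1)$ and $F\in\P(0,\halft]$ every HN factor $F_i$ of $T$ has strictly larger phase than every HN factor $G_j$ of $F$, so $\Hom(F_i,G_j)=0$, and a standard induction on the HN filtrations of $T$ and $F$ upgrades this to $\Hom(T,F)=0$. For the decomposition of an arbitrary $E\in\A$, take the HN filtration $0=E_0\subset\cdots\subset E_m=E$ with factors of phases $\phi_1>\cdots>\phi_m$, and let $i$ be the largest index with $\phi_i>\halft$ (possibly $i=0$ or $i=m$). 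Then $T:=E_i\in\P(\halft,1)=\T(\theta)$ and $F:=E/E_i\in\P(0,\halft]=\F(\theta)$, and $0\to T\to E\to F\to 0$ is the required short exact sequence.

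I do not anticipate a real obstacle; the only point requiring a little care is the translation between the subobject/quotient formulations and the HN-filtration formulations, which rests on the fact that the first and last HN factors are respectively a subobject and a quotient of $E$, together with the $\Hom$-vanishing \eqref{no}.
\end{pf}
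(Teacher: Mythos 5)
Your proposal is correct and follows the same route as the paper, which simply observes that both torsion-pair axioms follow from the existence of Harder–Narasimhan filtrations together with the Hom-vanishing \eqref{no}; you have merely written out those routine details (including the translation between the phase and subobject/quotient descriptions via the first and last HN factors). No gaps.
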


\begin{proof}
Recall that a torsion pair  is  a pair of full  subcategories $\T,\F\subset \A$ such that

\begin{itemize}
\item[(a)] if $T\in \T$ and $F\in \F$ then $\Hom_\A(T,F)=0$.

\item[(b)] for any $E\in \A$ there is a short exact sequence
\[0\lra T\lra E \lra F\lra 0\]
with $T\in\T$ and $F\in\F$.
\end{itemize}
These properties both follow immediately from the existence of Harder-Narasimhan filtrations and the identity \eqref{no} above.
\end{proof}

\label{late}
Note that if we are only interested in representations of total dimension $\leq k$ then the subcategories $\T(\theta)$ and $\F(\theta)$ are constant in $M_\R\setminus \W_k$. Moreover, at a point of $\W_k$, these categories remain unchanged if we fall off the wall in the $-\delta$ direction:
\begin{equation}
\label{wow}\T(\theta-\epsilon\delta)\cap \A_{\leq k}=\T(\theta)\cap \A_{\leq k}, \quad \F(\theta-\epsilon\delta)\cap \A_{\leq k}=\F(\theta)\cap \A_{\leq k},\quad 0<\epsilon\ll 1.\end{equation}
We shall use the obvious notation
\[1_\T(\theta)=1_{(\halft,1)}(\theta)\in \Hat{G}_\Hall, \qquad 1_\F(\theta)=1_{(0,\halft]}(\theta)\in \Hat{G}_\Hall.\]
It follows from \eqref{readit}, and the proof of Lemma \ref{surj}, that if $\theta_1,\theta_2\in M_\R$ both lie outside the essential support of $\DD$ then
\begin{equation}
\label{haz}\Phi_\DD(\theta_1,\theta_2)=1_{\T}(\theta_2)^{-1}* 1_{\T}(\theta_1)=1_{\F}(\theta_2) * 1_{\F}(\theta_1)^{-1}\in \Hat{G}_\Hall.\end{equation}
Further connections between the Hall algebra scattering diagram and the above torsion pair will appear in Section 8.


\section{Nearby stability conditions}
\label{stab}

Let $(Q,I)$ be a quiver with relations and $\A=\rep(Q,I)$ its category of representations. In this section we explain the relationship between the walls of the Hall algebra scattering diagram described in Section \ref{hallscatter}, and the walls of the second kind in the space of stability conditions on any triangulated category $\D$ in which $\A$ occurs as the heart of a bounded t-structure. Readers unfamiliar with triangulated categories can safely skip this section, together with Sections \ref{blo} below. The material of this section is very much inspired by the work of Nagao \cite[Section 4]{nagao}.

For simplicity we assume throughout that the algebra $\CQ/I$ is finite-dimensional: this is to avoid having to distinguish nilpotent representations from finite-dimensional ones. We fix notation as in Section \ref{quivnot}. In particular $\A$ is the abelian category of representations of a quiver with relations $(Q,I)$.

\subsection{}
Let  us  fix a triangulated category $\D$ equipped with a bounded t-structure whose heart is equivalent to $\A$. The most obvious choice is to take the standard t-structure on the bounded derived category $\D^b(\A)$ but it may  be useful  to allow other possibilities.
%
As  with any bounded t-structure, the inclusion functor gives an identification \[N=K_0(\A)=K_0(\D). \]
In particular, any object $E\in \D$ has a well-defined dimension vector $d(E)\in N$.
%

Given $\theta\in M_\R$ we define a new bounded t-structure by tilting $\A$ with respect to the torsion pair of Lemma \ref{strain}. Explicitly, its heart 
\[\A(\theta)=\<\F(\theta)[1],\T(\theta)\>\subset \D\]
consists of objects $E\in \D$ such that
\[H_\A^{-1}(E)\in \F(\theta),\  H_\A^0(E)\in \T(\theta)\text{ and } H_\A^i(E)=0\text{ otherwise}.\]
The inclusion functor again gives a canonical  identification
\begin{equation}
\label{iddy}K_0(\A(\theta))=K_0(\D)=N.\end{equation}
Recall that the support of the Hall algebra scattering diagram is the subset
\[\W=\bigcup_{k\geq 1} \W_k=\{\theta\in M_\R:\text{there exist $\theta$-semistable objects $0\neq E\in \A$}\}\subset M_\R.\]
We  denote its closure by $\bar{\W}\subset M_\R$.

\subsection{}We refer to the connected components of the complement $M_\R\setminus \bar{\W}$ as \emph{chambers}.

\begin{lemma}
Suppose that $\CC\subset \M_\R\setminus\bar{W}$ is a chamber. Then
\begin{itemize}
\item[(a)] the heart $\A(\theta)\subset \D$ is independent of $\theta\in \CC$ and has finite length.\smallskip

\item[(b)] the subcategory $\A(\theta)$ has finitely many simple objects $T_i$ up to isomorphism, and their classes form a basis of $N$.\smallskip

\item[(c)]  for each $i$ there is a sign $\epsilon(i)\in\{\pm 1\}$ such that $\epsilon(i)\cdot [T_i]\in N^+$.
\end{itemize}
 \end{lemma}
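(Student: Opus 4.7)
\smallskip
\noindent\textbf{Proof plan.}

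For the constancy part of (a), for any fixed dimension bound $k$ the observation preceding \eqref{wow} says that the subcategories $\T(\theta) \cap \A_{\leq k}$ and $\F(\theta) \cap \A_{\leq k}$ are locally constant on $M_\R \setminus \W_k$. Since $\W_k$ is closed by Lemma~\ref{clang} and contained in $\bar\W$, the connected chamber $\CC$ lies in a single connected component of $M_\R \setminus \W_k$, so the torsion pair restricted to $\A_{\leq k}$ is constant on $\CC$. Since every object of $\A$ lies in some $\A_{\leq k}$, the full torsion pair $(\T(\theta), \F(\theta))$, and hence the tilted heart $\A(\theta) = \<\F(\theta)[1], \T(\theta)\>$, is constant on $\CC$.

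The main obstacle is the finite-length statement. I would introduce the rotated central charge $Z'(E) = \delta(E) + i\theta(E)$ on $N$ and check that it restricts to a stability function on $\A(\theta)$ valued strictly in the open upper half plane. For nonzero $T \in \T(\theta)$, applying the defining property of $\T(\theta)$ to the quotient $T \onto T$ gives $\theta(T) > 0$, hence $\Im Z'(T) > 0$. For nonzero $F \in \F(\theta)$, the defining property applied to the subobject $F \subset F$ gives $\theta(F) \leq 0$, and equality would make $F$ a nonzero $\theta$-semistable object, contradicting $\theta \notin \bar\W$; hence $\Im Z'(F[1]) = -\theta(F) > 0$. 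Combined with finite-dimensionality of $\C Q/I$, standard tilting arguments then show that $\A(\theta)$ is equivalent to the module category of a finite-dimensional algebra, and in particular of finite length.

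With (a) in hand, parts (b) and (c) follow formally. Since $\A(\theta)$ is a finite length abelian category with Grothendieck group $N$ by \eqref{iddy}, the classes of its simple objects form a $\Z$-basis of $N$; this gives (b). For (c), every simple object $T_i$ of $\A(\theta)$ must be concentrated in a single cohomological degree of the original t-structure, because the canonical short exact sequence $0 \to H^{-1}(T_i)[1] \to T_i \to H^0(T_i) \to 0$ in $\A(\theta)$ would otherwise be a nontrivial decomposition. Thus either $T_i \in \T(\theta) \subset \A$, giving $[T_i] = d(T_i) \in N^+$ and $\epsilon(i) = +1$; or $T_i = F_i[1]$ for a nonzero $F_i \in \F(\theta) \subset \A$, giving $[T_i] = -d(F_i) \in -N^+$ and $\epsilon(i) = -1$.
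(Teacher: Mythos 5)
Your constancy argument, and your treatment of (b) and (c), are fine and essentially identical to the paper's (in (c) the canonical sequence exhibits a nontrivial subobject of $T_i$, not a direct sum decomposition, but the simplicity argument goes through as you intend). The genuine gap is in the finite-length step of (a). You establish that $\theta$ is strictly positive on every nonzero object of $\A(\theta)$ and then appeal to ``standard tilting arguments'' plus finite-dimensionality of $\CQ/I$ to conclude that $\A(\theta)$ is the module category of a finite-dimensional algebra. No such standard argument exists: a tilt of $\mod \CQ/I$ at an arbitrary torsion pair need not be a module category (that would require genuinely more, e.g.\ functorial finiteness of the torsion pair), and a strictly positive \emph{real-valued} additive function does not forbid infinite chains, since its values may decrease indefinitely without reaching $0$. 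Worse, the two facts you actually use --- strict positivity of $\theta$ on nonzero objects of $\A(\theta)$, which only needs that there are no nonzero $\theta$-semistables, i.e.\ $\theta\notin\W$, together with $\dim_\C \CQ/I<\infty$ --- already hold at points of $\bar{\W}\setminus\W$, for instance at generic points of the dense-wall region of the generalized Kronecker quiver with three arrows; at such points the tilted heart is in general \emph{not} of finite length, which is exactly why the lemma is stated for components of $M_\R\setminus\bar{\W}$ rather than of $M_\R\setminus\W$. So no argument using only those inputs can be complete.

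The missing idea is to exploit the openness of the chamber, which is what the paper does: since $\CC$ is a connected component of the complement of the closed set $\bar{\W}$, it is open, and by the constancy you have already proved you may replace $\theta$ by a rational point $\phi\in\CC\cap M_{\mathbb{Q}}$ with $\A(\phi)=\A(\theta)$, and then, clearing denominators, by an integral $\phi\in M$ which is strictly positive on all nonzero objects of $\A(\theta)$. Now $\phi$ is a $\Z_{>0}$-valued function, additive on short exact sequences in $\A(\theta)$, so along any strictly ascending or descending chain of subobjects of a fixed object its values form a strictly monotone sequence of positive integers bounded by $\phi$ of that object; hence both chains terminate and $\A(\theta)$ has finite length. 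The stronger claim that $\A(\theta)$ is a module category is neither needed nor available at this level of generality.
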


\begin{proof}
Suppose $\theta$ lies in the interior of $M_\R\setminus\W$. The torsion pair $(\T(\theta),\F(\theta))$ is constant in $M_\R\setminus \W$ so we can find $\phi\in M_{\mathbb{Q}}$ with $\A(\phi)=\A(\theta)$. Clearing denominators this means that we can find $\phi\in M$ which is strictly positive on all nonzero elements of $\A(\theta)$. It follows that there can be no infinite descending or ascending chains and so $\A(\theta)$ has finite length. This proves (a).

The classes of a complete set of non-isomorphic  simple objects $T_i\in \A(\theta)$  trivially form a basis for $K_0(\A(\theta))$. Under the identification \eqref{iddy} this corresponds to a basis of $N$, so in particular there are only finitely many $T_i$. This gives (b).

For (c), note that the heart $\A\subset \D$ is the tilt of the heart $\A(\theta)\subset \D$ with respect to the torsion pair $(\F(\theta)[1],\T(\theta))$. Since $T_i\in \A(\theta)$ is simple it must be either torsion or torsion-free. Thus we either have $T_i\in \T(\theta)$ or $T_i[-1]\in \F(\theta)$. Taking $\epsilon(i)=+1$ or $-1$ respectively it follows that $\epsilon(i)\cdot [T_i]\in N^+$.  \end{proof}
 
 In the context of quivers with potential and cluster theory, the classes $[T_i]\in N\isom \Z^{V(Q)}$ of part (b) are known  as  the $c$--vectors corresponding to the chamber $\CC$; the statement of part (c)  then goes under the name of known of sign-coherence.

\subsection{}
We shall now consider stability conditions on the triangulated category $\D$.  We always impose the support condition: there should exist $K>0$ such that for all semistable objects there is an inequality
\[|Z(E)|>K\cdot \| d(E)\|,\]
where $\|\cdot \|$ is some fixed norm on $N_\R$.
It follows from general theory that the set of such stability conditions forms a complex manifold 
$\Stab(\D)$, and that the forgetful map 
\[\Stab(\D)\to M_\C\]
sending a stability condition $(Z,\P)$ to its central charge $Z\colon N\to \C$ is a local homeomorphism.

Let us fix a stability condition $\sigma\in \Stab(\D)$. Any object $0\neq E\in \D$ then has a uniquely-defined collection of Harder-Narasimhan factors $E_1,\cdots, E_m$ with respect to $\sigma$. These are semistable objects in $\sigma$   with descending phases $\phi_1>\cdots>\phi_m$. We set $\phi^+(E)=\phi_1$ and $\phi^-(E)=\phi_m$. The object $E$ is semistable in $\sigma$ precisely if $m=1$ and hence $\phi^+(E)=\phi^-(E)$. For a fixed object $0\neq E\in \D$ the functions \[\phi^\pm(E)\colon \Stab(\D)\to \R\] are continuous. 
Given an interval $I\subset \R$ we write $\P(I)\subset \D$ for the full subcategory consisting of objects which are either 0 or which have the property that all the phases $\phi_i$ of their Harder-Narasimhan factors lie in $I$. In particular, for any $\phi\in \R$, the subcategory $\P(\{\phi\})=\P(\phi)$ consists of the semistable objects of phase $\phi$.

The subcategory $\P(0,1]\subset \D$ is called the heart of the stability condition. It is the heart of a bounded t-structure, and in particular is an abelian category.
A wall of type II in $\Stab(\D)$ is defined to be the locus where a fixed object $0\neq E\in \D$ lies in the subcategory $\P(0)$.  Note that in the complement of the closure of the union of these walls, the heart $\P(0,1]$ is locally constant, since the condition  that a given $0\neq E\in \D$ lies in the heart is that $0<\phi^-(E)\leq \phi^+(E)\leq 1$, and these are simultaneously open and closed conditions.

\begin{defn}
A stability condition $\sigma=(Z,\P)\in \Stab(\D)$ is said to be \emph{nearby} $\A$ if the condition $\A\subset \P(-1,1)$ holds. 
\end{defn}

This  terminology is due to Keller \cite{keller}. We write $\CC(\A)\subset \Stab(\D)$ for the subset of stability conditions nearby $\A$.

\begin{lemma}
The subset $\CC(\A)\subset \Stab(\D)$ is open.
\end{lemma}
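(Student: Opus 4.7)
The plan is to reduce the infinite family of conditions $E\in\P(-1,1)$ for $E\in\A$ to finitely many continuous conditions. Since $\C Q/I$ is assumed finite-dimensional throughout this section, every object of $\A$ has finite composition length, and the simple objects of $\A$ are precisely the one-dimensional simples $S_i$ indexed by the finite vertex set $V(Q)$. Hence every $E\in\A$ is a finite iterated extension of the $S_i$, and it will suffice to check the condition on each $S_i$ separately.

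The main technical ingredient will be the standard HN inequality: for any distinguished triangle $A\to E\to B\to A[1]$ in $\D$,
\[\phi^-(E)\geq \min\{\phi^-(A),\phi^-(B)\}, \qquad \phi^+(E)\leq \max\{\phi^+(A),\phi^+(B)\}.\]
These are proved by factoring any nonzero map from (respectively, into) a semistable object of phase $\phi$ through $A$ or $B$, and invoking the vanishing $\Hom_\D(\P(>\phi),\P(\leq\phi))=0$ inherent in the axioms of a stability condition. Applied inductively along a composition series, these inequalities show that $\P(-1,1)$ is closed under extensions taken inside $\A$. Consequently $\A\subset\P(-1,1)$ holds if and only if $S_i\in\P(-1,1)$ for every $i\in V(Q)$, equivalently $-1<\phi^-(S_i)$ and $\phi^+(S_i)<1$.

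For each fixed nonzero object $F\in\D$ the functions $\phi^\pm(F)\colon\Stab(\D)\to\R$ are continuous (this is part of the standard foundations of stability conditions), so each of the strict inequalities above cuts out an open subset of $\Stab(\D)$. The set $\CC(\A)$ is then the intersection of finitely many such open sets indexed by $i\in V(Q)$, and is therefore open. I do not anticipate any serious obstacle: the only nontrivial ingredient is the HN inequality, which is entirely standard, and the required finite-length property of $\A$ is furnished by the standing hypothesis that $\C Q/I$ be finite-dimensional.
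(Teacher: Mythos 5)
Your proof is correct and follows essentially the same route as the paper: the paper reduces the condition to the finitely many simples $S_i$ and invokes continuity of $\phi^\pm$, exactly as you do, merely leaving implicit the extension-closedness of $\P(-1,1)$ that you spell out via the standard HN inequalities.
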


\begin{proof}
The condition is just that each of the simple objects $S_i\in \A$ satisfies $-1<\phi^-(S_i)\leq \phi^+(S_i)<1$. Since the functions $\phi^\pm(E)$ are continuous on $\Stab(\D)$
and there are only finitely many of the $S_i$ this is an open condition.
\end{proof}

\subsection{}
\label{alal}
Consider the continuous map
\[F\colon \CC(\A) \to M_\R, \qquad F(Z,\P)=\Im Z,\]
sending a stability condition to the imaginary part of its central charge.

\begin{prop}
The following statements hold:
\begin{itemize}
\item[(a)] The map $F$ is surjective.\smallskip

\item[(b)] If $\sigma\in \CC(\A)$ satisfies $F(\sigma)=\theta\in M_\R$ then $\sigma$ has heart  $\A(\theta)\subset \D$. In particular, stability conditions in a given fibre of $F$ all have the same heart.\smallskip 

\item[(c)] If $\sigma\in \CC(\A)$ satisfies $F(\sigma)=\theta\in M_\R$ then $\P(0)\subset \D$ is the subcategory of $\theta$-semistable objects in $\A$. In particular, the subset $\W\subset M_\R$ is the image of the union of the walls of type II in $\CC(\A)$ under the map $F$. 
\end{itemize}
\end{prop}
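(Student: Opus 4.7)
The plan is to address the three parts in the order (b), (a), (c), since the identification of the heart in (b) is the main technical ingredient, from which (a) and (c) then follow fairly directly.

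For (b), the hypothesis $\sigma\in\CC(\A)$ gives $\A\subset\P(-1,1)\subset\P(-1,1]$, and this is precisely the condition for the heart $\A_\sigma:=\P(0,1]$ to be a tilt of $\A$ along a torsion pair $(\T_\sigma,\F_\sigma)$ in $\A$; explicitly $\T_\sigma=\A\cap\A_\sigma$ and $\F_\sigma=\A\cap\A_\sigma[-1]$, and $\A_\sigma=\<\F_\sigma[1],\T_\sigma\>$. It therefore suffices to show $\T_\sigma=\T(\theta)$ and $\F_\sigma=\F(\theta)$, from which $\A_\sigma=\A(\theta)$ is immediate. For the inclusion $\T_\sigma\subset\T(\theta)$, take $E\in\T_\sigma$ and a nonzero quotient $E\onto Q$ in $\A$; the assumption $\A\subset\P(-1,1]$ forces the $\A_\sigma$-cohomology of any object of $\A$ to be concentrated in degrees $0$ and $1$, and the long exact sequence applied to $K\to E\to Q$, using $H^0_{\A_\sigma}(E)=E$ and $H^1_{\A_\sigma}(E)=0$, shows $Q\in\A_\sigma$. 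Combined with $Q\in\A\subset\P(-1,1)$ this yields $Q\in\P(0,1)$, so every Harder-Narasimhan factor of $Q$ has phase strictly in $(0,1)$, whence $\theta(Q)=\Im Z(Q)>0$. The inclusion $\F_\sigma\subset\F(\theta)$ is proved symmetrically, applied to a subobject $A\into E$. Equality of the torsion pairs follows by the usual argument: given $E\in\T(\theta)$, the $\F_\sigma$-quotient in its torsion decomposition lies in $\F_\sigma\subset\F(\theta)$ but is also a quotient of an object of $\T(\theta)$, which by the defining positivity property forces it to vanish.

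For (a), I would construct $\sigma$ explicitly by setting $Z(E)=\delta(E)+i\theta(E)$ with heart $\A(\theta)$. The positivity $\theta(T)>0$ for $0\neq T\in\T(\theta)$ together with $\delta>0$ on $N^+$ places $Z(\T(\theta))$ in the open first quadrant of $\C$, while $Z(\F(\theta)[1])$ lies in the closed second quadrant (landing on the negative real axis precisely when $F$ is $\theta$-semistable), so $Z$ is a stability function on $\A(\theta)$. Harder-Narasimhan filtrations exist because $\A(\theta)$ inherits the finite length property from $\A$, and the support property is immediate from $|Z(E)|\geq\delta(E)$. To verify $\sigma\in\CC(\A)$, any $E\in\A$ with torsion decomposition $0\to T\to E\to F\to 0$ has HN factors of $T$ in the open first quadrant (phases strictly in $(0,\halft)$) and HN factors of $F$ in the closed fourth quadrant with positive real part (phases in $(-\halft,0]$), giving $E\in\P(-\halft,\halft)\subset\P(-1,1)$.

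For (c), by (b) the heart of $\sigma$ is $\A(\theta)$, so $\P(0)=\P(1)[-1]$ consists of those $E\in\D$ for which $E[1]\in\A(\theta)$ satisfies $Z(E[1])\in\R_{<0}$. Decomposing $E[1]$ via $\A(\theta)=\<\F(\theta)[1],\T(\theta)\>$ as $0\to F[1]\to E[1]\to T\to 0$ with $F\in\F(\theta)$ and $T\in\T(\theta)$, the identity $\Im Z(E[1])=\theta(T)-\theta(F)=0$ together with $\theta(T)\geq 0$ (strictly so if $T$ is nonzero) and $-\theta(F)\geq 0$ forces $T=0$ and $\theta(F)=0$. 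Thus $E=F\in\F(\theta)\cap\theta^\perp$, which by Definition \ref{bobob} is exactly the class of $\theta$-semistable objects in $\A$. Conversely, any $\theta$-semistable $F$ gives $F[1]\in\F(\theta)[1]\subset\A(\theta)$ with $Z(F[1])\in\R_{<0}$, and semistability of a phase-$1$ object in the heart $\P(0,1]$ is automatic. The final assertion of the proposition follows by combining this identification with part (a): the image under $F$ of the union of type II walls of $\CC(\A)$ consists of those $\theta\in M_\R$ admitting a nearby stability condition with a nonzero object of phase $0$, which is exactly $\W$. The main obstacle I expect is the torsion-pair comparison in (b), where the categorical description of $(\T_\sigma,\F_\sigma)$ via tilting must be reconciled with the numerical description of $(\T(\theta),\F(\theta))$; the essential input is the cohomological concentration of objects of $\A$ inside $\A_\sigma$ in degrees $0$ and $1$, which is exactly what the nearby assumption $\A\subset\P(-1,1)$ delivers.
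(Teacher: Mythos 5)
Parts (b) and (c) of your proposal are correct and essentially coincide with the paper's proof: your torsion-pair comparison via $\T_\sigma=\A\cap\P(0,1)$, $\F_\sigma=\A\cap\P(-1,0]$, and your phase-$1$/phase-$0$ analysis of $\P(0)$, are the same computations as in the paper, with only cosmetic differences (long exact sequence in $\A_\sigma$-cohomology versus closure of torsion classes under quotients and subobjects).

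The gap is in part (a). You construct a stability condition directly on the tilted heart $\A(\theta)$ with charge $Z=\delta+i\theta$, and justify the Harder--Narasimhan property by asserting that $\A(\theta)$ ``inherits the finite length property from $\A$''. That is false in general: a tilt of a finite-length category need not be of finite length, and the paper itself proves finite length of $\A(\theta)$ only for $\theta$ in a chamber of $M_\R\setminus\bar{\W}$, using a strictly positive integral functional that does not exist when $\theta\in\W$. Concretely, for the Kronecker quiver and $\theta$ on the wall $\theta(e_1+e_2)=0$ with $\theta(e_2)<0$, the heart $\A(\theta)$ is equivalent to (a shift of) $\mathrm{Coh}(\mathbb{P}^1)$, where line bundles admit infinite descending chains of subobjects. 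Surjectivity of $F$ is needed precisely at such wall points $\theta\in\W$ (otherwise the ``in particular'' clause of (c) is empty), so the problem cannot be avoided by perturbing $\theta$. Your support-property claim is likewise unjustified: for $E\in\A(\theta)$ the class $[E]$ is a difference of effective classes, so $\delta([E])$ can be zero or negative, and $|Z(E)|\geq|\delta([E])|$ gives no lower bound in terms of $\|d(E)\|$. The paper sidesteps both issues by building the stability condition on the original heart $\A$ — where finite length and the positivity of $\delta$ on $N^+$ make the HN and support properties standard — with central charge $(\delta-\theta)+i\delta$, and then acting by an explicit element of $\grp$ to shear the central charge so that its imaginary part becomes $\theta$; the sheared stability condition is automatically nearby. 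Your candidate pair $(\A(\theta),\,\delta+i\theta)$ is in fact the image of such a standard stability condition under a suitable element of $\grp$, so the statement you aim for is true, but it requires this (or a comparable) argument in place of the finite-length claim.
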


\begin{proof}
To prove surjectivity we use the action of the universal cover $\grp$ of the group $\GL^+(2,\R)$ on the space of stability conditions \cite[Section 8]{bridgeland1}. This group can be thought of as the
set of pairs $(T,f)$, where $f\colon \R\to\R$ is an
increasing map with $f(\phi+1)=f(\phi)+1$, and  $T\colon \C\to\C$
is an orientation-preserving $\R$-linear isomorphism, such that the induced maps on
\[S^1=\R/2\Z=\C^*/\R_{>0}\] are the same.
There is a right action of this group on $\Stab(\D)$ in which a 
pair $(T,f)$ maps a stability condition $\sigma=(Z,\P)$  to the stability condition $\sigma'=(Z',\P')$, where $Z'=T\circ Z$ and
$\P'(f(\phi))=\P(\phi)$. 
 Consider the element
\[T=\mat{1}{0}{-1}{1}\in\GL^+(2,\R).\]
It lifts uniquely to an element $(T,f)\in \grp$ such that $f(0,1)\subset (-\halft,1)$.
For any $\theta\in M_\R$ there is a stability condition $\sigma$ on $\D$ with heart $\A$ and central charge $Z=(\delta-\theta)+i\delta$. Indeed, since $\A$ has finite-length, and $\delta(E)>0$ for every nonzero object $E\in \A$,  this follows from the basic  existence result  \cite[Prop. 5.3]{bridgeland1}. Applying the element $(T,f)$ gives a stability condition $\sigma'=(Z',\P')$ such that $\A=\P(0,1)\subset \P(-\halft,1)$ and $\Im (Z')=\theta$. This proves (a).

To prove (b) take a nearby stability condition $\sigma=(Z,\P)\in \CC(\A)$. There is a torsion pair $(\T,\F)\subset \A$ given by
\[\T=\A\cap \P(0,1), \quad \F=\A\cap\P(-1,0].\]
It is a tautology that $\B=\P(0,1]$ is the tilt of $\A$ with respect to this torsion pair. Indeed $\F[1]\subset \B$ and $\T\subset \B$ so the tilted heart is contained in $\B$, and hence, by a standard argument, is
equal to it.
Let $\theta=\Im Z=F(\sigma)$. We claim that $\T=\T(\theta)$ and $\F=\F(\theta)$.
Indeed, any object $E\in \T$ lies in $\P(0,1)$ so  satisfies $\theta(E)> 0$. Similarly, any $E\in \F$ lies in $\P(-1,0]$ so satisfies $\theta(E)\leq 0 $. Since $\T$ and $\F$ are closed under quotient and subobjects respectively (this is true for any torsion pair) this implies that $\T\subset \T(\theta)\text{ and }\F\subset \F(\theta).$ It follows easily that these inclusions are equalities, which 
proves (b).

To prove  (c) we take notation as in the last paragraph and prove that an object $E\in \D$ lies in $\P(0)$ precisely if it lies in $\A$ and is $\theta$-semistable. Suppose first that $E\in \P(0)$. Since $E[1]\in \B=\P(0,1]$, there is a short exact sequence 
\[0\lra X[1]\lRa{f} E[1]\lRa{g} Y\lra 0\]
 in $\B$ with $X\in \F$ and $Y\in \T$. But then  $Y\in \P(0,1)$ which implies that $g=0$. Hence $E=X\in \F\subset \A$. Since $\F=\F(\theta)$ and $\theta(E)=0$ it follows that $E$ is $\theta$-semistable. Conversely, if $E\in \A$ is $\theta$-semistable, then $E\in \F(\theta)$ so that $E\in \B[-1]=\P(-1,0]$ and the fact that $\theta(E)=0$ implies that $E\in \P(0)$.\end{proof}
 
%


\section{Framed representations}
\label{frr}

In this section we consider  representations of quivers equipped with framings, i.e. maps from a fixed projective module. In particular, we introduce certain fine moduli schemes  which  are generalizations of the
moduli spaces studied by Engel and Reineke \cite{er}, and will be used in Section \ref{theta} to describe theta functions associated to the stability scattering diagrams of Section \ref{hallscatter}.

\subsection{}
\label{row}
Let $(Q,I)$ be a quiver with relations. We take notation as in Section \ref{quivnot}.  Associated to each vertex $i\in V(Q)$ is a  projective module  $P_i$. In terms of the idempotent $\epsilon_i\in \CQ$  corresponding to the vertex $i$ it can be written $P_i=(\CQ/I)\epsilon_i$. For each class $m\in M^\oplus$ there is therefore a finitely-generated projective module
\[P(m)=\bigoplus_{i\in V(Q)} P_i^{m(e_i)}.\]
Take $P=P(m)$ of this form.
 We consider the   category $\fr_P(Q,I)$ of $P$-framed representations of $(Q,I)$. The objects are defined to be representations  $E\in \rep(Q,I)$ equipped with a framing map $\nu\colon P\to E$. A morphism between two such $P$-framed representations is  a morphism of the underlying representations which intertwines the framing maps.

 We can give another description of the category  $\fr_P(Q,I)$ as follows. Form a new quiver $Q^\star$ containing $Q$ as a subquiver, by adjoining a new vertex $\star$  and, for each $i\in V(Q)$,   adding $m(e_i)$ arrows from vertex $\star$ to vertex $i$. The two-sided ideal of relations $I\subset \C Q$ generates a  two-sided ideal $I^\star\subset \C Q^\star$. Let \begin{equation}
 \label{subc}\rep(Q^\star,I^\star)_1\subset \rep(Q^\star,I^\star)\end{equation}
 be the subcategory whose objects are representations having the one-dimensional vector space $\C$ at the  vertex $\star$, and whose morphisms are morphisms of representations which have the identity map $\id\colon \C\to \C$ at the  vertex $\star$.

\begin{lemma}
\label{trivial}
There is an equivalence of categories \[\fr_P(Q,I)\isom \rep(Q^\star,I^\star)_1.\]
In particular, isomorphism classes of $\fr_P(Q,I)$ are in bijection with isomorphism classes of $\rep(Q^\star,I^\star)$ having a one-dimensional vector space at the extending vertex.
\end{lemma}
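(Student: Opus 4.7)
The plan is to exhibit mutually inverse functors between $\fr_P(Q,I)$ and $\rep(Q^\star,I^\star)_1$, using the universal property of the projective modules $P_i=(\CQ/I)\epsilon_i$, namely $\Hom_{\CQ/I}(P_i,E)\isom \epsilon_i E=E_i$.

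First I would construct the functor $\Phi\colon \fr_P(Q,I)\to \rep(Q^\star,I^\star)_1$. Given a $P$-framed representation $(E,\nu)$, note that
\[\Hom_{\CQ/I}(P,E)=\bigoplus_{i\in V(Q)}\Hom(P_i,E)^{m(e_i)}=\bigoplus_{i\in V(Q)}E_i^{m(e_i)},\]
so $\nu$ is equivalent to the data of elements $\nu_{i,k}\in E_i$ for each $i\in V(Q)$ and $1\leq k\leq m(e_i)$. Labeling the new arrows of $Q^\star$ from $\star$ to $i$ as $a_{i,k}$, define the representation $V$ of $Q^\star$ by $V_\star=\C$, $V_i=E_i$ for $i\in V(Q)$, with the old arrows acting as they do on $E$ and with $\rho(a_{i,k})\colon \C\to V_i$ being the map $1\mapsto \nu_{i,k}$. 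On morphisms, a framing-preserving map $\phi\colon E\to E'$ determines the $Q^\star$-morphism $(\phi_i,\id_\C)$, and the condition $\phi\circ\nu=\nu'$ becomes precisely the compatibility with the new arrows.

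Next I would verify that $V$ satisfies the ideal $I^\star$. The key observation is that every element of $I^\star$ is a sum of expressions of the form $\alpha r\beta$ with $r\in I\subset \CQ_{\geq 2}$ and $\alpha,\beta$ paths in $Q^\star$. Since there are no arrows in $Q^\star$ going \emph{into} the vertex $\star$, the path $\alpha$ is automatically a path in $Q$ whenever $\alpha r$ is a nonzero element of the path algebra; thus $\rho(\alpha r\beta)=\rho(\alpha)\rho(r)\rho(\beta)=0$ follows from the vanishing of $\rho(r)$, which is guaranteed because $E$ is a $\CQ/I$-module. Hence $\Phi$ lands in $\rep(Q^\star,I^\star)_1$.

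For the inverse functor $\Psi$, given $V\in \rep(Q^\star,I^\star)_1$, restrict along the inclusion $\CQ/I\hookrightarrow \CQ^\star/I^\star$ to get a representation $E$ of $(Q,I)$ with $E_i=V_i$, and assemble the maps $\rho(a_{i,k})\colon \C\to V_i$ into a framing $\nu\colon P\to E$ via the universal property above. On morphisms, a $\Psi$-morphism $(\phi_i,\id_\C)$ of $Q^\star$-representations automatically intertwines the framings by compatibility with the $a_{i,k}$. The verification that $\Phi$ and $\Psi$ are mutually inverse is then immediate from the construction.

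The argument is entirely routine; the only point that requires real care is the identification of the relations $I^\star$ with $I$, i.e. checking that no new relations are imposed on the framing data. This reduces to the structural fact that $\star$ is a source in $Q^\star$, so no path can re-enter $\star$ and the two-sided ideal generated by $I$ inside $\CQ^\star$ contributes nothing beyond the already-imposed condition that $E$ is a $\CQ/I$-module.
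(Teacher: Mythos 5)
Your proposal is correct and follows essentially the same route as the paper: both rest on the canonical isomorphism $\Hom_{\CQ/I}(P_i,E)\isom E_i$, which identifies a framing $\nu\colon P(m)\to E$ with exactly the linear maps $\C\to E_i$ defining the new arrows of $Q^\star$. You additionally spell out the functors and the check that $I^\star$ imposes no new conditions (since $\star$ is a source and $\rho(r)=0$ on all of $V$ for $r\in I$), which the paper leaves implicit but which is the same underlying argument.
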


\begin{proof}
Take $E\in \rep(Q,I)$ and let $E_i$ be the vector space at vertex $i\in V(Q)$. There is a canonical isomorphism
\[\Hom_{\CQ/I} (P_i,E)\isom E_i,\]
where $P_i=P(e_i^*)$ is the  projective corresponding to vertex $i$.
Choosing an identification $P=P(m)$ we obtain a canonical isomorphism
\[\Hom_{\CQ/I} (P,E)\isom \bigoplus_{i\in V(Q)} E_i^{\oplus m(e_i)}.\]
The vector space on the right parameterizes precisely the data required to extend $E$ to a representation of $(Q^\star,I^\star)$ with $\C$ at the extending vertex.
\end{proof}

The important point for us will be that framed representations of $(Q,I)$ can be viewed as representations of the extended quiver $(Q^\star,I^\star)$. But when it comes to considering moduli stacks we must be careful: the subcategory \eqref{subc} is not full, and a framed representation can have no non-trivial automorphisms (this happens for example if the framing map $\nu$ is surjective), whereas a representation of $(Q^\star,I^\star)$ always has at least a $\C^*$ group of automorphisms.

\subsection{}

Fix a class $m\in M^\oplus$ and consider again the corresponding extended quiver with relations $(Q^\star,I^\star)$.
 We let \[N^\star:=\Z^{V(Q^\star)}=\Z^{V(Q)}\oplus \Z=N\oplus \Z, \qquad M^\star:=\Hom_\Z(N^\star,\Z)=M\oplus\Z.\]
 For any dimension vector $d\in N$ we define $d^\star=(d,1)\in N^\star$. Given $\theta\in M_\R$ and $d\in N$ there is a unique lift $\theta^\star\in M^\star_\R$ such that $\theta^\star(d^\star)=0$. Explicitly we have
\[\theta^\star=(\theta,-\theta(d))\in M\oplus \Z.\]
We denote by $M^{\ss} (d^\star,\theta^\star)$ the coarse moduli scheme  of $\theta^*$-semistable representations of $(Q^\star,I^\star)$ of dimension vector $d^\star$.

\begin{lemma}
\label{7c}
The moduli scheme $M^{\ss} (d^\star,\theta^\star)$ is  fine providing  $\theta$ does not lie in the subset $\W_k\subset M_\R$ of Section \ref{sixa}, where $k=\delta(d)$ is the total dimension of $d$.
\end{lemma}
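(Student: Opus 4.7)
The plan is to reduce the fineness assertion to the single statement that, under the hypothesis $\theta\notin\W_k$, every $\theta^\star$-semistable representation of dimension $d^\star$ is automatically $\theta^\star$-stable. Once this is established, fineness follows from King's theorem \cite{king}: the last entry of $d^\star=(d,1)$ being $1$ means $\sigma\cdot d^\star=1$ for $\sigma=(0,\ldots,0,1)$, so the character $\theta^\star$ descends past the diagonal scalars and a universal family exists on the stable locus. Coincidence of the $\theta^\star$-stable and $\theta^\star$-semistable loci then makes $M^{\ss}(d^\star,\theta^\star)$ itself a fine moduli scheme.

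For the key reduction, take $F\in\rep(Q^\star,I^\star)$ of dimension $d^\star$ that is $\theta^\star$-semistable, and suppose for contradiction that some proper nonzero subrepresentation $F'\subsetneq F$ satisfies $\theta^\star(F')=0$. Since $F$ has a one-dimensional space at the extending vertex $\star$, the dimension of $F'$ at $\star$ is either $0$ or $1$, and I will derive a contradiction in each case using Lemma \ref{clang} via the hypothesis $\theta\notin \W_k$.

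If $F'$ has dimension $0$ at $\star$, then $F'$ lies in $\A$ with dimension vector $d'\in N^\oplus$ satisfying $0<d'\leq d$, in particular of total dimension $\leq k$. Any subobject $G\subset F'$ in $\A$ is also a subrepresentation of $F$ supported over $Q$, so $\theta^\star(G)=\theta(G)\leq 0$ by the semistability of $F$; combined with $\theta(F')=\theta^\star(F')=0$ this says $F'$ is $\theta$-semistable in $\A_{\leq k}$, contradicting $\theta\notin\W_k$. If instead $F'$ has dimension $1$ at $\star$, consider the quotient $F''=F/F'$, which is nonzero, has dimension $0$ at $\star$, so lies in $\A$ with total dimension $\leq k$, and satisfies $\theta(F'')=\theta^\star(F'')=\theta^\star(F)-\theta^\star(F')=0$. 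For any subobject $G\subset F''$ in $\A$, its preimage $G'\subset F$ contains $F'$, has dimension $1$ at $\star$, and a direct calculation gives $\theta^\star(G')=\theta^\star(F')+\theta(G)=\theta(G)$; semistability of $F$ then yields $\theta(G)\leq 0$, so $F''$ is again a nonzero $\theta$-semistable object of $\A_{\leq k}$, contradicting $\theta\notin\W_k$.

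The argument is essentially routine once one splits on the dimension at the vertex $\star$: the main point of care is to verify that in each case one really extracts a nonzero $\theta$-semistable witness in $\A$ of total dimension $\leq k=\delta(d)$, so that the hypothesis $\theta\notin \W_k$ can be applied. There is no serious obstacle; the statement is essentially a bookkeeping translation of King's semistability across the framing construction.
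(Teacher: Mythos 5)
Your proof is correct and follows essentially the same route as the paper: both reduce fineness to the absence of strictly $\theta^\star$-semistable representations of the primitive dimension vector $d^\star=(d,1)$ and then invoke King's result, and both extract from a would-be strictly semistable object a nonzero $\theta$-semistable object of $\A_{\leq k}$, contradicting $\theta\notin\W_k$. The only (minor) difference is in the mechanics of that extraction: the paper observes in one line that a strictly semistable object must have a $\theta^\star$-stable factor of dimension vector $(d',0)$, whereas you argue directly from a destabilizing subobject $F'$ with $\theta^\star(F')=0$, splitting on its dimension at $\star$ and passing to the quotient $F''=F/F'$ in the second case --- the same sub-versus-quotient analysis the paper itself uses later in Lemma \ref{stung}.
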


\begin{proof}
Since the dimension vector $d^\star\in N^\star$ is clearly primitive, the moduli space  will be fine providing that there are no strictly  $\theta^\star$-semistable objects \cite[Prop. 5.3]{king}. 
Suppose \[0\lra A\lra E\lra B\lra 0\]
 is a short exact sequence in $\rep(Q^\star,I^\star)$ such that $E$ has dimension vector $d^\star=(d,1)$.  Then either $A$ or $B$ has dimension vector of the form $(d',0)$. Thus if $E$ is strictly $\theta^\star$-semistable, it must have a $\theta^\star$-stable factor $S$ of dimension vector $(d',0)$. Then $S$ is  a non-zero $\theta$-stable representation  of $(Q,I)$ and hence $\theta\in\W_k$.
\end{proof} 

The proof of Lemma \ref{7c} shows that  $M^{\ss} (d^\star,\theta^\star)$ is constant as $\theta$ varies in $M_\R\setminus\W_k$. For a general point of 
$\theta\in M_\R$ we set
\[F(d,m,\theta)=M^{\ss} (d^\star,(\theta-\epsilon\delta)^\star), \quad 0<\epsilon\ll 1.\]
Since $\delta$ does not lie in $\W_k$,  this is always a fine moduli scheme. We set
\[F(m,\theta)=\bigsqcup_{d\in N^\oplus} F(d,m,\theta).\]
There is an obvious morphism $r\colon F(m,\theta)\to \M$ sending a semistable representation of $(Q^\star,I^\star)$ to the representation of $(Q,I)$ obtained by restriction.

\subsection{}
Let $P=P(m)$ be the projective representation of $(Q,I)$ corresponding to a class $m\in M^\oplus$, and fix also a class $\theta\in M_\R$.  The following result shows that the  moduli scheme $F(m,\theta)$ can be viewed as parameterizing $P$-framed representations of $(Q,I)$ of a particular type.


\begin{lemma}
\label{stung}Under the correspondence of Lemma \ref{trivial}, a framed representation $\nu\colon P\to E$ of dimension vector $d\in N^\oplus$ corresponds to a point  of $F(d,m,\theta)$ precisely if
\begin{itemize}
\item[(a)]  $E\in \F(\theta)$,
\smallskip

\item[(b)] $\coker(\nu)\in \T(\theta)$.
\end{itemize}
\end{lemma}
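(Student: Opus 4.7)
The plan is to translate via Lemma \ref{trivial} and analyze the semistability condition for $(Q^\star,I^\star)$ directly. A framed representation $\nu\colon P\to E$ with $E$ of dimension vector $d$ corresponds to a representation $\tilde E$ of $(Q^\star,I^\star)$ of dimension vector $d^\star=(d,1)$. By Lemma \ref{7c} (and the fact that $d^\star$ is primitive), $\tilde E$ defines a point of $F(d,m,\theta)$ precisely if it is $(\theta-\epsilon\delta)^\star$-semistable for some sufficiently small $\epsilon>0$. So the task is to convert this semistability condition into the conditions (a) and (b) on $(E,\nu)$.

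First I would classify the subrepresentations $C\subset\tilde E$ according to the dimension of their component at the extending vertex $\star$. Since this dimension must lie between $0$ and $1$, there are exactly two types. \emph{Type 1:} $C$ has dimension $(a,0)$ for some $a\in N^\oplus$, and corresponds to an arbitrary subrepresentation $A\subset E$ in $\rep(Q,I)$. \emph{Type 2:} $C$ has dimension $(b,1)$, with its underlying subrepresentation $B'=C\cap E\subset E$ necessarily containing $\im(\nu)$; equivalently, setting $q=d-b$, the dimension vector $q$ is that of a quotient $Q$ of $\coker(\nu)$, and every such quotient arises this way.

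Next I would compute $(\theta-\epsilon\delta)^\star$ on each type, using the formula $\theta^\star=(\theta,-\theta(d))$. For type 1 one gets $(\theta-\epsilon\delta)^\star(a,0)=\theta(a)-\epsilon\delta(a)$, and for type 2 one gets $(\theta-\epsilon\delta)^\star(b,1)=-\theta(q)+\epsilon\delta(q)$. Analysing the sign for $0<\epsilon\ll 1$: the type 1 inequality $\leq 0$ reduces (in the limit $\epsilon\to 0^+$, using $\delta(a)\geq 0$) to $\theta(a)\leq 0$, which must hold for every $A\subset E$; this is exactly condition (a), $E\in\F(\theta)$. The type 2 inequality becomes $\theta(q)\geq\epsilon\delta(q)$, which for $q\neq 0$ (where $\delta(q)>0$) holds for small $\epsilon$ if and only if $\theta(q)>0$; since $q$ ranges over dimension vectors of nonzero quotients of $\coker(\nu)$, this is exactly condition (b), $\coker(\nu)\in\T(\theta)$. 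Conversely, if (a) and (b) hold then a uniform choice of $\epsilon$ (depending only on the finitely many dimension vectors $a,q$ bounded by $d$) makes $\tilde E$ semistable.

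There are no serious obstacles; the only subtle point is managing the two directions of inequality and keeping track of how the $\epsilon\delta$ perturbation converts the weak inequality defining $\F(\theta)$ (subobjects) and the strict inequality defining $\T(\theta)$ (quotients) into a single semistability condition. Once the bijection between type 2 subobjects of $\tilde E$ and quotients of $\coker(\nu)$ is clearly stated, the rest is a short direct computation.
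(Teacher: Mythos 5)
Your proof is correct and takes essentially the same route as the paper's: both classify the subobjects (equivalently quotients) of the extended representation $E^\star$ by their dimension at the vertex $\star$, identify the two types with subobjects of $E$ and with quotients of $\coker(\nu)$, and match the resulting inequalities with the definitions of $\F(\theta)$ and $\T(\theta)$. The only cosmetic difference is that the paper first invokes \eqref{wow} to reduce to the case $\theta\notin\W_k$ and then analyses the unperturbed weight $\theta^\star$, whereas you carry the $\epsilon\delta$ perturbation explicitly through the inequalities and use finiteness of the relevant dimension vectors to choose $\epsilon$ uniformly.
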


\begin{proof}
The definition of $F(d,m,\theta)$ together with \eqref{wow} shows that we can assume that $\theta$ does not lie in $\W_k\subset M_\R$ where $k=\delta(d)$. 
It remains to show that  a $(Q^\star,I^\star)$ representation  $E^\star$ of dimension vector $d^\star$ is $\theta^\star$-semistable, precisely if the corresponding framed representation $(E,\nu)$ of  $(Q,I)$ satisfies  $E\in \F(\theta)$ and $\coker(\nu)\in \T(\theta)$.
Consider a short exact sequence of representations of $(Q^\star, I^\star)$
\[0\lra A^\star\lra E^\star\lra B^\star\lra 0.\]
There are two possibilities: either $A^\star$ or $B^\star$ has dimension vector $(n,0)$  for some $n\in N$, and hence is  a representation of $(Q,I)$ extended by zero.

In the first case $A^\star$ is a representation $A$ of $(Q,I)$ extended by zero. Moreover,  $A$  is a $(Q,I)$-subrepresentation of $E$. Conversely, any subrepresentation $A\subset E$ can be extended by zero to give a subrepresentation $A^\star\subset E^\star$. Then  $A^\star$ destabilizes $E^\star$ precisely if $\theta(A)\geq 0$. Since $\theta\notin\W_k$  this in fact implies $\theta(A)>0$.  But the existence of such a subrepresentation  $A\subset E$ is precisely the condition that $E\notin \F(\theta)$. 

In the second case $B^\star$ is a representation $B$ of $(Q,I)$ extended by zero. We therefore obtain a surjective map of $(Q,I)$-representations $g\colon E\to B$. But the fact that this extends to a map $E^\star\to B^\star$ forces the composite $g\circ \nu$ to be zero. Conversely, when $g\circ \nu=0$, any map $g$ extends uniquely to $g^\star\colon E^\star\to B^\star$. We conclude that quotients $E^\star\onto B^\star$ of dimension vector $(0,n)$ correspond to quotients $B$ of $\coker(\nu)$ of dimension vector $n$. We can destabilize $E^\star$ in this way precisely if we can find such a $B$ with $\theta(B)\leq 0$. This is equivalent to the condition  that $\coker(\nu)\notin \T(\theta)$.
\end{proof}

\subsection{}
\label{blo}
We finish this section by giving yet another interpretation of the framed quiver moduli spaces $F(d,m,\theta)$, this time in terms of the tilted hearts of Section \ref{stab}. This result will not be used later, and can be safely  skipped.  
For simplicity we shall assume that $\CQ/I$ is finite-dimensional so that all finitely-generated projective representations lie in the category $\A=\rep (Q,I)$.
  
Given $\theta\in M_\R$ we can consider the corresponding torsion sequence
\[0\lra R(m,\theta)\lra P(m)\lra U(m,\theta)\lra 0\]
with $R(m,\theta)\in \T(\theta)$ and $U(m,\theta)\in \F(\theta)$.
Also introduce the shifted heart
\[\B(\theta)= \A(\theta)[-1]=\<\F(\theta), \T(\theta)[-1]\>\subset \D.\]
Note that $U(m,\theta)\in \B(\theta)$.

%

\begin{prop}
\label{last}
There is a natural bijection between  points of the scheme
 $F(d,m;\theta)$ and isomorphism classes of pairs $(E,\mu)$ where
\begin{itemize}
\item[(a)] $E\in \B(\theta)$ is an object of class $[E]=d\in N$,
\item[(b)] $\mu\colon U(m,\theta)\to E$ is a surjection in $\B(\theta)$.
\end{itemize}
\end{prop}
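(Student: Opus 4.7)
The plan is to match the two sides by pushing a framing $\nu\colon P(m)\to E$ out to a map $\mu\colon U(m,\theta)\to E$ using the torsion pair of Lemma \ref{strain}, and then to check both directions by computing cohomology simultaneously with respect to the two t-structures with hearts $\A$ and $\B(\theta)$. Throughout, I will use Lemma \ref{stung} to replace closed points of the fine moduli scheme $F(d,m,\theta)$ by isomorphism classes of framed representations $\nu\colon P(m)\to E$ with $E\in\F(\theta)$, $\coker(\nu)\in\T(\theta)$ and $d(E)=d$.

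For the forward direction, given such an $(E,\nu)$, the composition $R(m,\theta)\hookrightarrow P(m)\lRa{\nu}E$ is a morphism from $\T(\theta)$ to $\F(\theta)$ and hence vanishes by the defining property of a torsion pair. Thus $\nu$ factors uniquely as a map $\mu\colon U(m,\theta)\to E$ in $\A$. To see that $\mu$ is a surjection in the tilted heart $\B(\theta)$, I compute the cone $C$ in $\D$: its $\A$-cohomology consists of $H^{-1}_\A(C)=\ker(\mu)$, a subobject of $U(m,\theta)\in\F(\theta)$ and hence in $\F(\theta)$; and $H^0_\A(C)=\coker(\mu)=\coker(\nu)\in\T(\theta)$. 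The truncation triangle $\ker(\mu)[1]\to C\to\coker(\mu)$ therefore has both outer terms in $\A(\theta)=\<\F(\theta)[1],\T(\theta)\>$, so $C\in\A(\theta)=\B(\theta)[1]$. Equivalently $C[-1]\in\B(\theta)$, which says precisely that $H^0_{\B(\theta)}(C)=0$, so $\mu$ is a surjection in $\B(\theta)$.

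The reverse direction is the more delicate one, since a priori $E$ is only known to lie in $\B(\theta)$. Let $K$ be the kernel of $\mu$ in $\B(\theta)$, and apply the long exact sequence of $\A$-cohomology to the triangle $K\to U(m,\theta)\to E\to K[1]$. Since $U(m,\theta)\in\A$ is concentrated in $\A$-degree $0$, and since $K,E\in\B(\theta)$ have $\A$-cohomology only in degrees $0$ and $1$, the vanishing of $H^1_\A(U(m,\theta))$ and $H^2_\A(K)$ forces $H^1_\A(E)=0$. Combined with $H^{-1}_\A(E)=0$, this shows $E=H^0_\A(E)\in\F(\theta)\subset\A$, and extracts a four-term exact sequence $0\to H^0_\A(K)\to U(m,\theta)\to E\to H^1_\A(K)\to 0$ in $\A$ with $H^1_\A(K)\in\T(\theta)$. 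The composition $\nu\colon P(m)\onto U(m,\theta)\lRa{\mu}E$ then has cokernel equal to $H^1_\A(K)\in\T(\theta)$, so Lemma \ref{stung} supplies a point of $F(d,m,\theta)$; the class $[E]\in N$ matches because $K_0(\D)=K_0(\A)=N$. The two constructions are inverse to each other by inspection, and isomorphisms on one side manifestly correspond to isomorphisms on the other.

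I expect the main technical hurdle to be precisely the bookkeeping with the two t-structures in the second step; the critical observations are that the cone of $\mu$ is pinned into $\A(\theta)$ by the torsion pair constraints, and that the $\B(\theta)$-kernel $K$ has only two $\A$-cohomologies, one in $\F(\theta)$ and one in $\T(\theta)$. Once these are in place the standard long exact sequences do all the work, and no additional moduli-theoretic input is required beyond Lemmas \ref{trivial} and \ref{stung}: the statement is essentially an exercise in tilting theory for the torsion pair $(\T(\theta),\F(\theta))$.
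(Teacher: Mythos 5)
Your proposal is correct and follows essentially the same route as the paper: both directions reduce to Lemma \ref{stung} via the unique factorization of $\nu$ through $P(m)\onto U(m,\theta)$ and a long exact sequence in $\A$-cohomology for the sequence $0\to K\to U(m,\theta)\to E\to 0$ in $\B(\theta)$. The only cosmetic difference is that you verify surjectivity of $\mu$ in $\B(\theta)$ directly, by placing the cone of $\mu$ in $\A(\theta)=\B(\theta)[1]$, whereas the paper argues by contradiction with a quotient $q\colon E\onto Q$ satisfying $q\circ\mu=0$.
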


\begin{proof}
This follows easily from Lemma \ref{stung}.  Given a pair $(E,\mu)$ as in the statement, we can form a short exact sequence in $\B(\theta)$
\[0\lra K\lra U(m,\theta)\lra E\lra 0\]
Now take the associated long exact sequence in cohomology with respect to the  t-structure with heart $\A$. This shows  that $E=H_\A^0(E)$ and hence that $E\in \F(\theta)$. Moreover, the cokernel of $\mu$, viewed as a map in $\A$, is  the object $H^1_\A(K)\in \T(\theta)$. Composing with the  surjection  $P(m)\to U(m,\theta)$ we  get a map $\nu\colon P(m)\to E$ in the category $\A$ whose cokernel lies in $\T(\theta)$.

For the converse, suppose given a map $\nu\colon P(m)\to E$ in $\A$ such that $E\in\F(\theta)$ and whose cokernel lies in $\T(\theta)$. Since $R(m,\theta)\in \T(\theta)$ this factors via $P(m)\onto U(m,\theta)$ and hence induces a map $\mu\colon U(m,\theta)\to E$. If $\mu$ is not surjective in $\B(\theta)$ we can find a surjection $q\colon E\onto Q$ such that $q\circ \mu=0$. The long exact sequence in $\A$-cohomology as above then implies that $Q\in \F(\theta)$. This then contradicts  $\coker(\nu)\in\T(\theta)$.
\end{proof}

It can often happen that the category $\B(\theta)$ is equivalent to a category of the form $\rep(Q',I')$ for some new quiver $(Q',I')$. In that case, the proof of Prop. \ref{last}  can easily be extended to show  that the scheme $F(d,m,\theta)$ is a quiver Grassmannian.


\section{Stacks of framed representations}

In this section we consider stacks parameterizing framed representations of the various kinds considered in Section \ref{fr}. We then prove some important identities relating the corresponding elements of the motivic Hall algebra. Throughout $(Q,I)$ is a fixed quiver with relations and $P=P(m)$ is a  finitely-generated projective module for $\CQ/I$ corresponding to a class $m\in M^\oplus$.

\subsection{}
\label{nag}
 We begin by introducing the stack $\mfr_P$
of $P$-framed representations of $(Q,I)$. The reader is advised to first recall the definition of the stack $\M$ from Section \ref{blib}.
For each scheme $S$ over $\C$ we first consider the quasi-coherent $\O_S$-module \[\P_S:=P\tensor_\C \O_S\]
 with the induced action of  $\CQ/I$. This is just the pullback of the representation $P$ via the projection $S\to \Spec(\C)$ exactly as in the definition of the stack $\M$, except that since $P$ may be infinite-dimensional in general, the resulting locally-free sheaf may have infinite rank.
 
 The objects of the stack $\mfr_P$ over a scheme $S$  consist of an object $(\E,\rho)$ of $\M(S)$ together with a morphism of quasi-coherent sheaves
$\nu\colon \P_S\to \E$ which intertwines the actions of $\CQ/I$ on $\P_S$ and $\E$. Given another object $(\E',\rho',\nu')$ lying over a scheme $S'$, a morphism in $\M_P$
\[(\E',\rho',\nu')\to (\E,\rho,\nu)\]
 lying over a morphism  $f\colon S'\to S$ is defined to be an isomorphism of $\O_S$-modules $\theta\colon f^*(\E)\to \E'$ intertwining the maps $\rho$ and $\rho'$ as in the definition of the stack $\M$, with the further condition that this isomorphism commutes with the $\O_S$-module maps $\nu$, $\nu'$ in the obvious way.

 We leave it to the reader to check the stack conditions, which follow easily from those for $\M$. There is an obvious morphism of stacks
$q\colon \mfr_P\to \M$
obtained by forgetting the framings.

\begin{lemma}
\label{8a}
Restricted to the open and closed subset $\M_d\subset \M$, the map $q$ is a vector bundle of rank $m(d)$.
\end{lemma}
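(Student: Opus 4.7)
The plan is to show that over $\M_d$ the forgetful map $q$ is represented by the total space of a naturally defined rank $m(d)$ vector bundle. The key algebraic input is the standard identification
\[\Hom_{\CQ/I}(P_i,E)\isom \epsilon_i\cdot E=E_i,\]
used already in the proof of Lemma \ref{trivial}; since $P=\bigoplus_{i\in V(Q)} P_i^{m(e_i)}$, this gives
\[\Hom_{\CQ/I}(P,E)\isom\bigoplus_{i\in V(Q)} E_i^{\oplus m(e_i)}\]
for any $E\in\A$, and on a representation of dimension vector $d$ this vector space has dimension $\sum_i d_i\cdot m(e_i)=m(d)$.

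First I would promote this identification to families. Given a morphism $T\to \M_d$ corresponding to an object $(\E,\rho)$ of $\M_d(T)$, the description of $\M_d$ as a quotient of $\Rep(d)$ by $\GL(d)$ shows that $\E$ canonically decomposes as a direct sum $\E=\bigoplus_{i\in V(Q)}\E_i$ of locally-free $\O_T$-modules with $\operatorname{rank}(\E_i)=d_i$, on which the idempotents $\epsilon_i\in\CQ/I$ act as the projections. Because each $P_i$ is a finitely generated projective left $\CQ/I$-module, the functor $\Hom_{\CQ/I}(P_i,-)$ is exact and commutes with arbitrary direct sums and base change; applied to $\P_T=P\tensor_\C\O_T$ and $\E$ this gives a natural isomorphism of $\O_T$-modules
\[\Hom_{\CQ/I,\O_T}(\P_T,\E)\isom\bigoplus_{i\in V(Q)} \E_i^{\oplus m(e_i)}.\]

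Now I would read off the fibre product. By the definition of $\mfr_P$ in Section \ref{nag}, the set of $T$-points of $T\times_{\M_d}\mfr_P$ is precisely the set of global sections of this sheaf. Writing \[\V(d,m)=\bigoplus_{i\in V(Q)} \E_i^{\oplus m(e_i)},\] this is a locally-free $\O_T$-module of rank $m(d)$, and the fibre product is therefore canonically the total space of the vector bundle $\V(d,m)$ on $T$. Since the formation of $\V(d,m)$ is manifestly functorial in $T\to\M_d$, these data assemble into a vector bundle on $\M_d$ whose total space is $q^{-1}(\M_d)$; this gives the desired rank $m(d)$ vector bundle structure on the restricted morphism $q$.

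The only genuine point requiring care is the functoriality/base-change claim for $\Hom_{\CQ/I}(P_i,-)$ in families of not-necessarily coherent sheaves (since $\P_S$ is only quasi-coherent when $P$ is infinite-dimensional). This is handled immediately by the fact that each $P_i$ is finitely generated projective, so $\Hom_{\CQ/I}(P_i,-)$ is a direct summand of $\Hom_{\CQ/I}(\CQ/I,-)=(-)$ and hence exact and compatible with pullback; nothing deeper is needed.
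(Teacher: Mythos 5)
Your proposal is correct and follows essentially the same route as the paper: pull back $q$ along a test scheme mapping to $\M_d$, use the identification from Lemma \ref{trivial} to recognise framings as sections of the locally-free sheaf $\bigoplus_{i\in V(Q)}\E_i^{\oplus m(e_i)}$ of rank $m(d)$, and conclude that the pullback is the total space of this vector bundle. Your extra remarks on base change via the projectivity of the $P_i$ simply make explicit a point the paper leaves implicit.
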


\begin{proof}
Let $S$ be a scheme and $f\colon S\to \M_d$ an $S$-valued point corresponding to a representation $(\E,\rho)$ of $(Q,I)$ over $S$. Pulling the morphism $q$ back to $S$ gives a stack $S_P\to S$ whose $T$-valued points consist of a morphism $g\colon T\to S$ and a framing of $g^*(\E,\rho)$. Now by the argument of Lemma \ref{trivial}, the set of such framings coincides with the set of sections of the sheaf $g^*(\V)$ on $T$, where
\[\V=\bigoplus_{i\in V(Q)} \E_i^{\oplus m(i)}\]
Note that $\V$ is a locally-free sheaf on $T$ of rank $m(d)$.
 It follows that the stack $S_P$ is represented by the total space of the associated vector bundle, which proves the result.\end{proof}

%
\subsection{}
 Globalising Lemma \ref{stung} we would like to identify the scheme $F(m,\theta)$ with an open substack of $\M_P$. There are two slightly tricky issues to understand first:
\smallskip

\begin{itemize}
\item[(i)] Let $\M^\star_1$ denote the stack of representations of the extended quiver $(Q^\star,I^\star)$ having a one-dimensional representation at the extending vertex. A family version of Lemma \ref{trivial} (see also the proof of Lemma \ref{8a}) gives a morphism of stacks
$\M_P\to \M^\star_1$. However, because of the discrepancy in automorphism groups referred to in Section \ref{row} this is not an isomorphism, but rather a   $\C^*$-bundle.
\smallskip

\item[(ii)] We wish to define an open substack of $\M_P$ whose points parameterize framed objects $\nu\colon P\to E$ for which $E\in \F(\theta)$ and $\coker(\nu)\in \T(\theta)$. However, when working with objects of $\M_P$ over a general scheme $S$, there is no reason to expect the cokernel of the framing map $\nu\colon \P_S\to \E$ to be locally-free and hence define an object of $\M(S)$. So the definition of such a substack is problematic. 

\end{itemize}

The first issue does not in fact cause a problem,  since the scheme $F(m,\theta)$ is also a $\C^*$-bundle over the corresponding open substack of $\M^\star_1$ for the usual reason that moduli stacks of stable objects are $\C^*$-gerbes over the corresponding moduli schemes.

To deal with  the second issue we define a new stack $\M'_P$ in the same way as $\M_P$ but imposing the condition that the cokernel of the framing map $\nu$ should be locally-free. This is indeed a substack because forming cokernels commutes with restriction to open subsets, and the condition that a sheaf be locally-free can be checked locally. The inclusion morphism $\M'_P\to \M_P$ is then an equivalence  on $\C$-valued points, so that from the point-of-view of the motivic Hall algebra the two stacks are interchangeable.

\subsection{}
The stack $\M'_P$  has two obvious morphisms
\[p\colon \M'_P\to \M, \quad q\colon \M'_P\to \M.\]
The first morphism $p$ takes a framed representation $\nu\colon \P_S\to \E$ to the finite-rank locally-free sheaf $\coker(\nu)$, with the obvious induced  action of $\CQ/I$. The second morphism $q$ forgets the framings as before: it takes $\nu\colon \P_S\to \E$ to the locally-free sheaf $\E$ with its given action of $\CQ/I$. 
Given $\theta\in M_\R$ we define an open substack
\[\M'_P(\F(\theta))=\M'_P\cap q^{-1}(\F(\theta)), \quad \M'_P(\T(\theta),\F(\theta))=\M'_P(\F(\theta))\cap p^{-1}(\T(\theta)) \]
The following is a moduli-theoretic version of Lemma \ref{stung}.

\begin{lemma}
\label{abo}
There is a morphism of stacks
\[\M'_P(\T(\theta),\F(\theta)) \lra F(m,\theta)\]
which induces an equivalence on $\C$-valued points.
\end{lemma}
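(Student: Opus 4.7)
The plan is to use the universal property of the fine moduli scheme $F(m,\theta)$ provided by Lemma \ref{7c}. Given an object of $\M'_P(\T(\theta),\F(\theta))$ over a scheme $S$, i.e.\ a triple $(\E,\rho,\nu)$ with $\nu\colon\P_S\to\E$ intertwining the $\CQ/I$-actions, whose fibrewise underlying representation lies in $\F(\theta)$ and whose (assumed locally-free) cokernel lies fibrewise in $\T(\theta)$, I need to produce a classifying morphism $S\to F(m,\theta)$ and check functoriality.

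The construction proceeds as follows. Applying a relative version of Lemma \ref{trivial}, adjoin the trivial line bundle $\O_S$ at the extending vertex $\star$ and use the components of $\nu$ to define the new arrows from $\star$ into the sheaves $\E_i$; since the ideal $I^\star$ is generated by $I$, this produces a family of representations of $(Q^\star,I^\star)$ of dimension vector $(d,1)$ over $S$ (working, as we may, on the component where $\E$ has constant rank $d$, since $\M'_P$ decomposes by dimension vector via the forgetful map $q$). At each closed point $s\in S$, Lemma \ref{stung} combined with \eqref{wow} shows that the fibre is $(\theta-\epsilon\delta)^\star$-stable. Since $(d,1)$ is primitive, fibrewise stability is enough to invoke the universal property of the fine moduli scheme $F(d,m,\theta)$ and obtain the classifying morphism $S\to F(d,m,\theta)$. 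The assignment is natural in $S$ and respects the isomorphisms in $\M'_P$, so it descends to a morphism of stacks.

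For the equivalence on $\C$-valued points, the inverse map is obtained by reading Lemmas \ref{trivial} and \ref{stung} in the opposite direction: any $(\theta-\epsilon\delta)^\star$-stable representation of $(Q^\star,I^\star)$ of dimension $(d,1)$ corresponds to a $P$-framed representation satisfying (a) and (b), and these bijections are mutually inverse at the level of isomorphism classes. The main subtlety is the $\C^*$-discrepancy flagged in point~(i) preceding the statement: $\M_P$ is only a $\C^*$-bundle over the stack $\M^\star_1$ of extended representations with one-dimensional space at $\star$. However the scheme $F(m,\theta)$, being the fine moduli of stable extended representations, is itself a $\C^*$-bundle over the corresponding open substack of $\M^\star_1$, so the two $\C^*$-factors cancel and the construction correctly factors through the scheme $F(m,\theta)$ rather than through a $\C^*$-gerbe over it. Verifying this cancellation cleanly is the main bookkeeping obstacle; everything else is a routine family version of Lemma \ref{stung}.
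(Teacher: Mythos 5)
Your proposal is correct and follows essentially the same route as the paper: pass from a family in $\M'_P(\T(\theta),\F(\theta))$ to a family of representations of $(Q^\star,I^\star)$ with one-dimensional space at $\star$, use Lemma \ref{stung} to see it is fibrewise stable, obtain the classifying morphism to the fine moduli scheme $F(m,\theta)$, and use Lemma \ref{stung} again (together with the $\C^*$-bundle remark preceding the lemma) for the bijection on $\C$-valued points. The extra details you supply (Lemma \ref{7c}, primitivity of $(d,1)$, the cancellation of the two $\C^*$-factors) are exactly the points the paper disposes of in its preliminary discussion.
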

 
 \begin{proof}
We have morphisms of stacks $\M'_P\to \M_P\to \M^\star_1$. So given an $S$-valued point of the stack $\M'_P(\T(\theta),\F(\theta))$ we obtain a family of $(Q^\star,I^\star)$ representations over $S$. Lemma \ref{stung} shows that these in fact define a family of stable representations, and thus give rise to a map $S\to F(m,\theta)$. This defines the required morphism. Lemma \ref{stung} again shows that it is an equivalence on $\C$-valued points.
 \end{proof}

\subsection{}
Let us again fix an element $\theta\in M_\R$. Using the remark of  Section \ref{spelement}, we obtain elements
\[ 1^P_\F(\theta)=[\M'_P(\F(\theta))\lRa{q} \M], \qquad F(m,\theta) = [F(m,\theta)\lRa{r} \M],\]
in the completed Hall algebra $\Hat{H}(Q,I)$. 

\begin{lemma}
\label{cloud}
In the algebra $\Hat{H}(Q,I)$  there is an identity
\[1^P_\F(\theta)=F(m,\theta)*1_\F(\theta).\]
\end{lemma}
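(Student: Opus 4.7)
\medskip

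\noindent\textbf{Proof proposal.}
The plan is to realise the right-hand side as a stack mapping to $\mathcal{M}$, exhibit a morphism to $\M'_P(\F(\theta))$ over $\mathcal{M}$, and verify that this morphism is an equivalence on $\mathbb{C}$-valued points. The identity then follows from the observation of Section \ref{sunny}.

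By definition of the convolution product, the element $F(m,\theta)*1_\F(\theta)$ is represented by the stack
\[
Z=\bigl(F(m,\theta)\times \M'(\F(\theta))\bigr)\times_{\M\times\M}\M^{(2)},
\]
equipped with the map $b\circ h\colon Z\to \M$ that sends a short exact sequence to its middle term. Using Lemma \ref{stung} to identify $\mathbb{C}$-points of $F(m,\theta)$ with framed objects, a $\mathbb{C}$-point of $Z$ is a short exact sequence $0\to E_1\to E\to E_2\to 0$ in $\A$ equipped with a framing $\mu\colon P\to E_1$ such that $E_1\in \F(\theta)$, $\coker(\mu)\in \T(\theta)$ and $E_2\in \F(\theta)$. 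Define a morphism $g\colon Z\to \M'_P(\F(\theta))$, over $\M$, by sending such data to the composite framing $\nu=\iota\circ\mu\colon P\to E_1\hookrightarrow E$; note that $E\in \F(\theta)$ since $\F(\theta)$ is closed under extensions.

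I would then check that $g$ is an equivalence on $\mathbb{C}$-valued points by writing down an explicit quasi-inverse. Given a framed representation $\nu\colon P\to E$ with $E\in \F(\theta)$, form the torsion sequence of $\coker(\nu)\in \A$ with respect to the torsion pair $(\T(\theta),\F(\theta))$ of Lemma \ref{strain}:
\[
0\lra C_\T\lra \coker(\nu)\lra C_\F\lra 0, \qquad C_\T\in \T(\theta),\ C_\F\in \F(\theta),
\]
and pull this back under the quotient map $\pi\colon E\onto\coker(\nu)$. Setting $E_1=\pi^{-1}(C_\T)$ and $E_2=C_\F$ yields a short exact sequence $0\to E_1\to E\to E_2\to 0$ in which $\nu$ factors through $E_1$ (because $\im\nu=\ker\pi\subset E_1$), giving a framing $\mu\colon P\to E_1$ with cokernel $E_1/\im\nu\cong C_\T\in \T(\theta)$, and with $E_1\in \F(\theta)$ because $\F(\theta)$ is closed under subobjects. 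This construction is manifestly functorial in isomorphisms of framed representations, and inverts $g$ on $\mathbb{C}$-valued points: the two compositions are the identity because the torsion sequence of $\coker(\nu)$ is unique, and because conversely, for data in $Z$ the sequence $0\to \coker(\mu)\to \coker(\nu)\to E_2\to 0$ is the torsion sequence of $\coker(\nu)$, forcing $E_1=\pi^{-1}(\coker(\mu))$.

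There is no real obstacle here beyond a careful bookkeeping of the framings. The only mildly subtle point is the verification that an isomorphism of framed representations $(\nu,E)\cong(\nu',E')$ lifts \emph{uniquely} to an isomorphism of the extracted short exact sequences, so that $g$ is an equivalence of groupoids and not just a bijection on isomorphism classes; this reduces to the functoriality of the torsion filtration of $\coker(\nu)$. Once $g$ is shown to be an equivalence on $\mathbb{C}$-points, the observation recalled in Section \ref{sunny} yields $[Z\to \M]=[\M'_P(\F(\theta))\to \M]$ in $\Hat{H}(Q,I)$, which is the claimed identity.
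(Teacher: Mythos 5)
Your proposal is correct and is essentially the paper's own proof: the paper represents $F(m,\theta)*1_\F(\theta)$ by the same convolution stack, maps it to $\M'_P(\F(\theta))$ by composing the framing with the inclusion into the middle term, and inverts on $\C$-valued points exactly as you do, by observing that the quotient must be the torsion-free part of $\coker(\nu)$, so the short exact sequence is unique. The one point where the paper is more careful is that it first invokes Lemma \ref{abo} to replace $F(m,\theta)$ by the stack $\M'_P(\T(\theta),\F(\theta))$ before forming the convolution, so that families carry genuine framings and the comparison map is an honest morphism of stacks over $\M$ (as required by the remark of Section \ref{sunny}); in your version, where the fibre product is taken with the fine moduli scheme $F(m,\theta)$ itself, the universal family only determines the framing up to a twist by the line bundle at the extending vertex, so defining $g$ on $S$-points, rather than just on $\C$-points via Lemma \ref{stung}, needs this extra step.
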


\begin{proof}
By Lemma \ref{abo} we can replace $F(m,\theta)$ by the stack $\M'_P(\T(\theta),\F(\theta))$. The product on the right of the identity is then represented by a stack $X$ whose objects over a scheme $S$ consist of short exact sequences
\[0\lra \E_1\lRa{\alpha} \E_2\lRa{\beta} \E_3\lra 0\]
of families of quiver representations over $S$, together with a framing $\nu\colon \P_S\to \E_1$ whose cokernel is locally-free. We also insist that restricted to any $\C$-valued point, all representations lie in $\F(\theta)$, and the cokernel of  $\nu$ lies in $\T(\theta)$. Sending such an object to  the family $\E_2$ equipped with the framing $\alpha\circ \nu$ defines a morphism of stacks  \[h\colon X\to \M'_P(\F(\theta)).\]
 The identity will follow if we show that $h$ induces an an equivalence on $\C$-valued points.
To prove this, note that given an element $E\in \F(\theta)$ and a framing $\nu\colon P\to E$, there is a unique short exact sequence
\begin{equation}
\label{noo}0\lra A\lRa{f} E\lRa{g} B\lra 0\end{equation}
such that $A,B\in \F(\theta)$, the composite $g\circ\nu=0$, and the induced map $\nu\colon P\to A$  has cokernel in $\T(\theta)$. Indeed, the quotient $B$ is obtained uniquely as the torsion-free part of $\coker(\nu)$.
\end{proof}

Let us introduce further elements of $\Hat{H}(Q,I)$ as follows.
\[ \qquad 1^P_\ss(\theta)=[\M'_P\cap q^{-1}(\M_{\ss}(\theta))\lRa{q} \M], \quad F_\ss(m,\theta) = [F(m,\theta)\cap r^{-1}(\M_{\ss}(\theta))\lRa{r} \M].\]
 The following is an easy consequence of Lemma \ref{cloud}.

\begin{lemma}
\label{clouds}
In the algebra $\Hat{H}(Q,I)$  there is an identity
 \[1^P_\ss(\theta)=F_\ss(m;\theta) * 1_\ss(\theta).\]
\end{lemma}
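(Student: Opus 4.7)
The plan is to deduce this identity from Lemma \ref{cloud} by intersecting both sides with the $\theta$-semistable locus. Recall from the proof of Lemma \ref{cloud} that the product $F(m,\theta)*1_\F(\theta)$ is represented by a stack $X$ whose $S$-points are short exact sequences
\[0\lra \E_1\lRa{\alpha}\E_2\lra \E_3\lra 0\]
of families with $\E_1,\E_3\in \F(\theta)$ fibrewise, equipped with a framing $\nu\colon \P_S\to \E_1$ whose cokernel is locally-free and lies in $\T(\theta)$ fibrewise; the morphism $h\colon X\to \M'_P(\F(\theta))$ sending such a datum to $(\E_2,\alpha\circ\nu)$ induces an equivalence on $\C$-valued points.

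First I would define the open substack $X_\ss\subset X$ by imposing the further conditions that $\E_1$ and $\E_3$ be fibrewise $\theta$-semistable. Unwinding the definition of convolution and the stacks appearing in Section \ref{frr}, this $X_\ss$ represents the product $F_\ss(m,\theta)*1_\ss(\theta)$ in $\Hat{H}(Q,I)$. On the other side, $1^P_\ss(\theta)$ is represented by the open substack $\M'_P\cap q^{-1}(\M_\ss(\theta))\subset \M'_P(\F(\theta))$, openness being immediate since $\theta$-semistable objects lie in $\F(\theta)$.

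The main observation is that in any short exact sequence $0\to A\to E\to B\to 0$ in $\A$ with $A,B\in\F(\theta)$, the middle term $E$ is $\theta$-semistable if and only if both $A$ and $B$ are. The subcategory $\F(\theta)$ is closed under extensions (as the torsion-free class of a torsion pair), and for an object of $\F(\theta)$ the $\theta$-semistability condition reduces to the vanishing of $\theta$ on it; the equivalence then follows from additivity of $\theta$ on short exact sequences combined with the non-positivity of $\theta(A)$ and $\theta(B)$.

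Given this observation, the $\C$-point equivalence furnished by $h$ in Lemma \ref{cloud} restricts to a $\C$-point equivalence $X_\ss\to \M'_P\cap q^{-1}(\M_\ss(\theta))$, and the required identity is then obtained by invoking the principle of Section \ref{sunny}. There is no substantial obstacle beyond the semistability criterion in the preceding paragraph, which is the only real content of the argument.
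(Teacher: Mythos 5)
Your proof is correct and follows essentially the same route as the paper: the key point in both is that for objects of $\F(\theta)$ semistability reduces to the vanishing of $\theta$, so in a short exact sequence with outer terms in $\F(\theta)$ the middle term is semistable iff both outer terms are, and then the argument of Lemma \ref{cloud} restricts to the semistable loci. (The paper also notes the equivalent shortcut of restricting the identity of Lemma \ref{cloud} to dimension vectors orthogonal to $\theta$, which amounts to the same observation.)
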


\begin{proof}
Note that if $E\in \F(\theta)$ then $E$ is $\theta$-semistable precisely if $\theta(E)=0$. 
Thus given a short exact sequence  \eqref{noo} with all objects in $\F(\theta)$ then \[\theta(E)=0 \iff \theta(A)=\theta(B)=0.\] The identity then follows in the same way as Lemma \ref{cloud} or by simply restricting the identity of Lemma \ref{cloud} to the substack of  representations whose dimension vectors are orthogonal to $\theta$.
\end{proof}


 \section{Theta functions}
\label{theta}

The theta functions defined by a scattering diagram are of crucial importance in the applications  to cluster varieties described in \cite{ghkk}.  In the case of the stability scattering diagram, we can describe some of these theta functions in terms of generating functions for Euler characteristics of the framed quiver moduli introduced in Section \ref{frr}.  The general case is left for future research.

\subsection{}
\label{rivelin}
 We begin by considering the following abstract set-up. Take a finite-rank free abelian group $N$ with a positive cone $N^+\subset N$  as in Section \ref{basic} and use notation as defined there. Now take $A$ to be an $N^\oplus$-graded Poisson algebra
\[A=\bigoplus_{n\in N^\oplus} A_n, \quad A_{n_1}\cdot A_{n_2}\subset A_{n_1+n_2}, \quad
\{A_{n_1},A_{n_2}\}\subset A_{n_1+n_2}.\]
 For each $k\geq 1$ we can consider the Poisson ideal $A_{>k}=\oplus_{\delta(n)>k} \, A_n,$ and the corresponding quotient $A_{\leq k}=A/A_{\geq k}$. Taking the limit of the obvious maps gives a Poisson algebra 
 \[\Hat{A}=\lim_{\longleftarrow} \;A_{\leq k} .\]
  The subspace $\g=A_{>0}\subset A$ is a Lie algebra under the Poisson bracket. The corresponding completion $\Hat{\g}=\Hat{A}_{>0}$ is a pro-nilpotent Lie algebra. As usual, we denote the corresponding pro-unipotent group by $\Hat{G}$.
  
  \subsection{}
We shall also need to equip the vector space $B=A\tensor_\C \C[M]$ with the structure of an $N^\oplus$ graded Poisson algebra in such a way that the inclusions
\[A\hookrightarrow B,\quad a\mapsto a\tensor 1, \qquad \C[M]\hookrightarrow B, \quad z^m\mapsto 1\tensor z^m\]
are maps of graded Poisson algebras (we equip the algebra $\C[M]$ with the trivial Poisson bracket and consider all elements to have degree 0). The same completion process then gives a  Poisson algebra
\[\Hat{B}=\Hat{A}\tensor_\C \C[M].\]
 
 Since $\Hat{A}$ as a subalgebra of $\Hat{B}$ closed under Poisson bracket,
 the Lie algebra $\Hat{\g}=\Hat{A}_{>0}$ acts on  $B$ by derivations via $a(b) =\{a,b\}$.
Exponentiating gives an action of the group $\Hat{G}$  by Poisson algebra automorphisms of $\Hat{B}$:
\begin{equation}
\label{form}\exp(x)(b)=\exp\{x,-\} (b).\end{equation}
This is the action we will use to define our theta functions.

%

\subsection{} {\it Example.}\label{egg1}
 Let $N$ be a finite rank free abelian group with a skew-symmetric form \[\<-,-\>\colon N\times N \to \Z.\]
  Also fix a basis $(e_1,\cdots,e_n)$ and let $N^+\subset N$ be the positive cone spanned by the $e_i$. Let $A$  be the monoid algebra \[\C[N^\oplus]=\bigoplus_{n\in N^\oplus} \C\cdot x^n,\] and equip it with the Poisson bracket
 \[\{x^{n_1},x^{n_2}\}=\<n_1,n_2\> \cdot x^{n_1+n_2}.\]
 We take $B$ to be the commutative tensor product algebra $B=\C[N^\oplus]\tensor_\C \C[M]$
 with 
  \[\{x^n,z^m\}=m(n)\cdot x^n\cdot  z^{m}, \quad \{z^{m_1},z^{m_2}\}=0.\]
  Writing $x_i=x^{e_i}$ and $z_i=z^{e_i^*}$ we have
 \[A=\C[x_1,\cdots,x_n], \quad B=\C[x_1,\cdots,x_n][z_1^{\pm 1}, \cdots, z_n^{\pm 1}].\]
  Completing with respect to the $N^\oplus$-grading gives Poisson algebras
 \[\Hat{A}=\C[[x_1,\cdots,x_n]], \quad \Hat{B}=\C[[x_1,\cdots,x_n]][z_1^{\pm 1}, \cdots, z_n^{\pm 1}].\]
 This is the relevant context  for the stability and cluster scattering diagrams described in the Introduction.
%

\subsection{}{\it Example.}\label{egg2}
Take a quiver with potential $(Q,I)$ and take $A$ to be the Hall algebra\[A=H(Q,I)=\bigoplus_{d\in N^\oplus} H(Q,I)_d,\] over $\C(t)$. We define an algebra structure on  $B=A\tensor_\C \C[M]$ extending that on the subalgebras $A$ and $\C[M]$ by setting
\[a_d * z^m = t^{2m(d)} \cdot z^m* a_d, \quad a_d\in H(Q,I)_d.\]
We equip the algebras $A$ and $B$ with the scaled commutator bracket
\[\{a,b\}=(t^2-1)^{-1}\cdot [a,b]\]
as in \eqref{brack}. In particular, we have
 \[\{a_d,z^m\}=\frac{t^{2m(d)}-1}{t^2-1}\cdot a_d * z^m, \quad \{z^{m_1},z^{m_2}\}=0.\]
This is the context relevant to the stability scattering diagram.

Note that as in Section \ref{puff}, the subspace $\g=A_{>0}$ viewed as a Lie algebra via its Poisson bracket is isomorphic to the Lie algebra $\g_{\Hall}$ via the map $x\mapsto (t^2-1)^{-1}\cdot x$. Thus the corresponding group $\Hat{G}$ can be identified with $\Hat{G}_{\Hall}$.

\subsection{}Returning to the general context of Sections 10.1--2, let    $\DD=(\DD_k)_{k\geq 1}$  be a scattering diagram in $\g=A_{>0}$. Recall the associated elements
$\Phi_\D(\theta_1,\theta_2)\in \Hat{G}$
from Section \ref{threeone}.
For each $m\in M$ we define\footnote{Theta functions can also be defined for more general classes $m\in M$ \cite[Section 3]{ghkk} but we shall not consider those here since we are unable to provide a moduli-theoretic description for them.}  a theta function

\[\vartheta^m\colon M_\R\setminus \supp(\DD) \lra \Hat{B}\]
by using the action of the group $\Hat{G}$ on the  algebra $\Hat{B}$, and writing
\[\vartheta^m(\theta)=\Phi_\DD(\theta_+,\theta)(z^m),\]
for an arbitrary element $\theta_+\in M_\R^+$.

Consider composing $\vartheta^m$ with the projection $\pi\colon \Hat{B}\to B_{\leq k}$.  The result extends to a function \[\vartheta^m\colon M_\R\setminus \supp(\DD_k) \lra B_{\leq k}\]
which   is constant on each connected component of the domain. On crossing a generic point of a wall $\dd\in \DD_k$ in the positive direction,  the theta function changes by the action of the corresponding wall-crossing automorphism:
\[\vartheta^m (\theta)\mapsto \Phi_{\DD_k}(\dd)\big(\vartheta^m(\theta)\big)\in B_{\leq k}.\]
 Note  also that by definition $\vartheta^m(\theta)=z^m$ for all  $\theta\in M_\R^+$.

\subsection{}
Consider the case of the Hall algebra scattering diagram of Theorem \ref{hallsc}. Thus we take $A=H(Q,I)$ equipped with the Poisson bracket \eqref{brack} and consider the extended algebra $B=A\tensor_{\C} \C[M]$ as in Section \ref{egg2}.

 \begin{thm}
 \label{mool}
Let $\DD$ be the Hall algebra scattering diagram for the pair $(Q,I)$ and fix $m\in M^\oplus$. Then  there is an identity
\[\vartheta^m(\theta)=z^{m}\cdot F(m,\theta)\in \Hat{B}.\]
 \end{thm}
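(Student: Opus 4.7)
The plan is to unwind the definition of $\vartheta^m(\theta)$, interpret the action of the single group element $1_\F(\theta)$ on $z^m$ in Hall-algebraic terms, and then identify both sides using Lemmas \ref{cloud} and \ref{8a}.

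First I would reduce the theta function to a single group element. For $\theta_+\in M^+_\R$ no nonzero object of $\A$ can lie in $\F(\theta_+)$, since $\theta_+(E)>0$ for every nonzero $E$. Hence $1_\F(\theta_+)=\one$, and formula \eqref{haz} gives
\[\Phi_\DD(\theta_+,\theta) = 1_\F(\theta)*1_\F(\theta_+)^{-1}=1_\F(\theta).\]
Thus $\vartheta^m(\theta)=1_\F(\theta)(z^m)\in \Hat{B}$, where the action is that of $\Hat{G}\cong\Hat{G}_\Hall$ on $\Hat{B}$.

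Next I would make this action concrete. The Lie-algebra isomorphism $x\mapsto(t^2-1)^{-1}x$ between $\g$ (with Poisson bracket) and $\g_\Hall$ (with commutator bracket) realises $\Hat G\cong \Hat G_\Hall$ as subsets of $\one+\Hat H(Q,I)_{>0}$. Using the Leibniz rule together with the formulas $\{a_d,z^m\}=\tfrac{t^{2m(d)}-1}{t^2-1}\,a_d *z^m$, $\{z^{m_1},z^{m_2}\}=0$, and the quantum commutation $a_d * z^m=t^{2m(d)}z^m * a_d$, a direct computation shows that for any $g\in \Hat G_\Hall$ the result $g(z^m)$ takes the form $z^m * \widetilde g$ in $\Hat{B}$, where $\widetilde g\in \one+\Hat H(Q,I)_{>0}$ is obtained from $g$ by a graded rescaling that inserts a factor of $t^{2m(d)}$ on the degree-$d$ component. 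Applying this to $g=1_\F(\theta)$ gives $\vartheta^m(\theta)=z^m * \widetilde{1_\F(\theta)}$.

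Step three is to translate the target identity. Lemma \ref{cloud} in the form $1^P_\F(\theta)=F(m,\theta) * 1_\F(\theta)$ rewrites the claim $\vartheta^m(\theta)=z^m\cdot F(m,\theta)$ as the purely Hall-algebraic assertion
\[1_\F(\theta) * z^m = z^m * 1^P_\F(\theta),\]
which, after using the commutation relation to push $z^m$ through each graded piece, becomes
\[1^P_\F(\theta)_d = t^{2m(d)}\cdot 1_\F(\theta)_d \quad \text{in } H(Q,I)_d\]
for every $d\in N^\oplus$.

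Finally, this graded identity is exactly what Lemma \ref{8a} supplies. That lemma says $q\colon \M_P\to \M$ restricts over $\M_d$ to a vector bundle of rank $m(d)$. Replacing $\M_P$ with the open substack $\M'_P$ does not change the class in the Hall algebra, as the inclusion is an equivalence on $\C$-valued points and we may appeal to the principle of Section \ref{sunny}. Intersecting with the open substack of objects in $\F(\theta)$, the restriction $q\colon \M'_P(\F(\theta))_d\to \M_d\cap \F(\theta)$ is a rank-$m(d)$ vector bundle, and its class in the motivic Hall algebra is $\Upsilon([\mathbb{A}^{m(d)}])=t^{2m(d)}$ times the class of its base, namely $t^{2m(d)}\cdot 1_\F(\theta)_d$, as required.

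The main obstacle is step two: carefully translating the Poisson-exponential action on $\Hat{B}$ into a statement about multiplication inside $\Hat{H}(Q,I)$, and tracking the quantum factors coming from the $q$-commutation of $z^m$ with graded elements. Once that translation is in place, the theorem follows by direct combination of Lemmas \ref{cloud} and \ref{8a}.
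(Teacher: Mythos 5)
Your overall route is the paper's: reduce via \eqref{haz} to the action of the single element $1_\F(\theta)$ on $z^m$ (your observation that $1_\F(\theta_+)=\one$ for $\theta_+\in M_\R^+$ is correct), and then combine Lemma \ref{8a} (the rank-$m(d)$ vector bundle, which after intersecting with $\F(\theta)$ gives exactly that the degree-$d$ component of $1^P_\F(\theta)$ is $t^{2m(d)}$ times that of $1_\F(\theta)$, i.e. $1_\F(\theta)*z^m=z^m*1^P_\F(\theta)$) with Lemma \ref{cloud}. Steps 1, 3 and 4 are precisely the ingredients of the paper's proof.

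The flaw is in step 2. The action \eqref{form} of $\Hat{G}_{\Hall}$ on $\Hat{B}$ is by algebra automorphisms, and under the identification with $\one+\Hat{H}(Q,I)_{>0}$ it is conjugation, as in Section \ref{dull}(ii): $g(z^m)=g*z^m*g^{-1}=z^m*\widetilde{g}*g^{-1}$, where $\widetilde{g}=\sum_d t^{2m(d)}g_d$ is your graded rescaling. Your formula $g(z^m)=z^m*\widetilde{g}$ drops the right-hand factor $g^{-1}$; it already fails for $g=\exp(y)$ with $y$ homogeneous of degree $d$ (exponential taken in $\Hat{H}(Q,I)$), where conjugation yields $z^m*\exp\big((t^{2m(d)}-1)\,y\big)$ rather than $z^m*\exp\big(t^{2m(d)}y\big)$. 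This is not cosmetic: taken literally, your step 2 combined with your correct step 4 would give $\vartheta^m(\theta)=z^m*\widetilde{1_\F(\theta)}=z^m*1^P_\F(\theta)=z^m*F(m,\theta)*1_\F(\theta)$, which is off by the factor $1_\F(\theta)$, and the equivalence asserted in step 3 between the theorem and $1_\F(\theta)*z^m=z^m*1^P_\F(\theta)$ does not follow from your step 2 formula --- it follows exactly from conjugation. The repair is immediate: keep the $g^{-1}$, so that $\vartheta^m(\theta)=1_\F(\theta)*z^m*1_\F(\theta)^{-1}=z^m*1^P_\F(\theta)*1_\F(\theta)^{-1}=z^m*F(m,\theta)$ by Lemmas \ref{8a} and \ref{cloud}; it is precisely the conjugating factor $1_\F(\theta)^{-1}$ that cancels the $1_\F(\theta)$ in Lemma \ref{cloud}. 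With that correction your argument coincides with the paper's proof.
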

 
 \begin{proof}
Take an element $m\in M^\oplus$. Note that by \eqref{haz} the Hall algebra scattering diagram satisfies
\[\vartheta^m(\theta)=\Phi_\DD(\theta_+,\theta)(z^m)=1_\F(\theta)(z^m)\in \Hat{B}.\]
The Lie algebra of the group $\Hat{G}_{\Hall}$ can be identified with $A_{>0}$ equipped with the Poisson bracket of Section \ref{egg2}. Alternatively it can be identified with $A_{>0}$ equipped with the commutator bracket. These two identifications differ by a factor of $(t^2-1)$. Using the second identification we see that  as in Section \ref{dull}, the action of the element $1_\F(\theta)\in \Hat{G}_{\Hall}$ on $\Hat{B}$ is by conjugation. But
Lemmas \ref{8a} and \ref{cloud} show that there is an identity in $\Hat{B}$
 \[ 1_\F(\theta)\cdot z^m= z^m\cdot 1^P_\F(\theta)=z^m\cdot F(m,\theta) * 1_\F(\theta).\]
 This proves the result.
 \end{proof}

A very similar proof gives an alternative description of the wall-crossing automorphisms of the Hall algebra scattering diagram.

 \begin{thm}
 \label{mool2}
Let $\DD$ be the Hall algebra scattering diagram for the pair $(Q,I)$.   Then
the action of   $\Phi_\DD(\dd)$ on $\Hat{B}$ at a general point of the support of $\DD$ satisfies
\[z^m\mapsto z^m \cdot \Big[\big.\coprod_{\theta(d)=0}F(d,m,\theta)\to \M \Big].\]
 \end{thm}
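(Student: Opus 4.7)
The plan is to follow the template of the proof of Theorem \ref{mool} verbatim, replacing the torsion-free pieces $1_\F(\theta)$, $1^P_\F(\theta)$, $F(m,\theta)$ by their semistable counterparts $1_\ss(\theta)$, $1^P_\ss(\theta)$, $F_\ss(m,\theta)$ introduced in Section 9. By Theorem \ref{hallsc}, at a general point $\theta$ of the support of $\DD$ the wall-crossing automorphism is $\Phi_\DD(\dd)=1_\ss(\theta)\in \Hat{G}_\Hall$. As recalled in the proof of Theorem \ref{mool}, the Lie algebra of $\Hat{G}_\Hall$ may be identified either with $A_{>0}$ equipped with the Poisson bracket, or with $A_{>0}$ equipped with the commutator bracket (these identifications differing by $(t^2-1)$); via the second identification the action of $1_\ss(\theta)$ on $\Hat{B}$ is simply conjugation $b\mapsto 1_\ss(\theta)\cdot b \cdot 1_\ss(\theta)^{-1}$ inside $\Hat{B}$.

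The computation then proceeds in two steps. First I would commute $1_\ss(\theta)$ past $z^m$. The $d$-graded piece of $1_\ss(\theta)$ lives in $H(Q,I)_d$, so the commutation relation $a_d * z^m = t^{2m(d)}\cdot z^m * a_d$ of Example \ref{egg2} gives
\[
1_\ss(\theta) * z^m = z^m * \sum_{d\in N^\oplus} t^{2m(d)}\cdot \bigl[\M_\ss(\theta)_d\subset \M\bigr].
\]
Lemma \ref{8a} exhibits the forgetful map $\M'_P\to \M_d$ as a vector bundle of rank $m(d)$, and hence (restricted to the open substack of $\theta$-semistables) identifies $[\M'_P\cap q^{-1}(\M_\ss(\theta))_d\to \M]$ with $\L^{m(d)}\cdot [\M_\ss(\theta)_d\subset \M]=t^{2m(d)}\cdot [\M_\ss(\theta)_d\subset \M]$. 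Summing over $d$ yields the clean formula
\[
1_\ss(\theta)*z^m = z^m * 1^P_\ss(\theta).
\]

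Second, I invoke Lemma \ref{clouds}, which states $1^P_\ss(\theta)=F_\ss(m,\theta)*1_\ss(\theta)$. Combining the two identities gives
\[
1_\ss(\theta)\cdot z^m\cdot 1_\ss(\theta)^{-1} = z^m\cdot F_\ss(m,\theta),
\]
which is the desired formula once we match $F_\ss(m,\theta)$ with $\coprod_{\theta(d)=0}F(d,m,\theta)$. This last identification is essentially tautological: by Lemma \ref{stung} every point of $F(d,m,\theta)$ corresponds to a framed object $\nu\colon P\to E$ with $E\in \F(\theta)$, and an object of $\F(\theta)$ is $\theta$-semistable precisely when $\theta(E)=0$, i.e.\ $\theta(d)=0$. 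Thus intersecting $F(m,\theta)$ with $r^{-1}(\M_\ss(\theta))$ simply restricts the coproduct to dimension vectors in $\theta^\perp$.

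There is no real obstacle; the only point requiring a little care is the first step, where one must simultaneously track (i) the twisting factor $t^{2m(d)}$ coming from the commutation in $B=A\tensor \C[M]$ and (ii) the twisting factor $\Upsilon(\L)^{m(d)}=t^{2m(d)}$ coming from the vector-bundle Lemma \ref{8a}; these are exactly designed to cancel, which is why the framed stacks encode $z^m$-conjugation in the Hall algebra.
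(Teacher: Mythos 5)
Your proposal is correct and is essentially the paper's own argument: the paper proves Theorem \ref{mool2} by running the proof of Theorem \ref{mool} with Lemma \ref{cloud} replaced by Lemma \ref{clouds}, i.e. using $\Phi_\DD(\dd)=1_\ss(\theta)$, conjugation on $\Hat{B}$, the cancellation of the $t^{2m(d)}$ twist against Lemma \ref{8a} to get $1_\ss(\theta)*z^m=z^m*1^P_\ss(\theta)$, and then $1^P_\ss(\theta)=F_\ss(m,\theta)*1_\ss(\theta)$, with the tautological identification $F_\ss(m,\theta)=\big[\coprod_{\theta(d)=0}F(d,m,\theta)\to\M\big]$.
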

 
 \begin{proof}
Apply the proof of  Theorem \ref{mool} replacing  Lemma \ref{cloud} with Lemma \ref{clouds}.
\end{proof}


\section{Quivers with potential: the CY$_3$ case}
\label{load}

In this section we consider the three-dimensional Calabi-Yau situation, when $Q$ is a 2-acyclic quiver with finite potential $W$.  We apply a homomorphism of Lie algebras to the Hall algebra scattering diagram of Section \ref{hallscatter} to obtain a  more concrete scattering diagram which we call the stability scattering diagram of the pair $(Q,W)$. 

There is another scattering diagram associated to $Q$ which we call the cluster scattering diagram associated to $Q$. It was first introduced by Kontsevich and Soibelmnan and plays a vital role in the recent work of Gross, Hacking, Keel and Kontsevich \cite{ghkk}. We show that when $Q$ is acyclic this scattering diagram coincides with the stability scattering diagram.

\subsection{} Let $(Q,I)$ be a quiver with relations as in Section \ref{quivnot}. We now assume that the relations $I\subset \C Q$ are defined by a polynomial potential
\[I=(\partial_a W :a\in A(Q))\subset \C Q.\]
We always assume that $Q$ is 2-acyclic which implies that the potential $W\in \C Q_{\geq 3}$ is a finite sum of cycles of length $\geq 3$.
 The quotient algebra $\C Q/I$ is called the  Jacobi algebra of the pair $(Q,W)$.
Define a skew-symmetric form
 \[\<-,-\>\colon N\times N\to \Z\]
 by setting 
$\<e_i,e_j\>=a_{ji}-a_{ij}$,
where $a_{ij}$ is the number of arrows in $Q$ from vertex $i$ to vertex $j$. In the case that the category $\A=\rep(Q,I)$ is \CY this will coincide with the Euler form on $N=K_0(\A)$ but it is not necessary to assume this. 

Exactly as in Section \ref{egg1} we define a Poisson bracket on $A=\C[N^\oplus]$ by setting
\[\{x^{n_1},x^{n_2}\}=\<n_1,n_2\> \cdot x^{n_1+n_2}.\]

The following crucial result allows us to transport  statements in the Hall algebra into statements in the much simpler algebra $\C[N^\oplus]$. 

\begin{thm}
\label{mine}
There is a homomorphism of $N^\oplus$-graded Poisson algebras
 \[I\colon H_{\reg}(Q,W)\to \C[N^\oplus], \quad I([f\colon X\to \M_d])=e(X)\cdot x^d,\]
 where $e(X)$ denotes the Euler characteristic of  the finite-type complex scheme $X$ equipped with the analytic topology.
\end{thm}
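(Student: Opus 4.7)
The plan is to define $I$ on symbols by the prescribed formula $e(X)\cdot x^d$ and then verify in turn: (i) well-definedness on $H_\reg(Q,W)$, (ii) multiplicativity, and (iii) Poisson compatibility. Conceptually $I$ factors through the specialization $\C_\reg(t)\to\C$ at $t=-1$; this specialization is well-defined because each inverted denominator $1+t^2+\cdots+t^{2k}$ evaluates to $k+1\neq 0$, and under it the Poincar\'e polynomial $\Upsilon(X)$ recovers the Euler characteristic $e(X)$.

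For well-definedness, the scissor relation of Definition \ref{rel}(a) maps to the additivity of $e$, the Zariski fibration relation (b) to its multiplicativity on Zariski-locally trivial bundles, and the $\C_\reg(t)$-module relation $[X\times Y\to\M]=\Upsilon([X])\cdot[Y\to\M]$ to the product formula $e(X\times Y)=e(X)e(Y)$. For multiplicativity of $I$, the convolution $[f_1]*[f_2]$ is computed by the pullback of diagram \eqref{ba}, whose fibre over $(E_1,E_3)\in\M\times\M$ is the quotient stack $[\Ext^1(E_3,E_1)/\Hom(E_3,E_1)]$. Since $\Hom$ acts by translation on $\Ext^1$, this quotient stack has motivic class $\mathbb{L}^{\dim\Ext^1-\dim\Hom}$ and Euler characteristic $1$ at $t=-1$. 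Stratifying $X_1\times X_2$ by the locally constant $\Hom$ and $\Ext^1$ dimensions then yields $I([f_1]*[f_2]) = e(X_1)e(X_2)\cdot x^{d_1+d_2} = I([f_1])\cdot I([f_2])$.

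The main obstacle is the Poisson compatibility. Writing $\epsilon(E,F) := \dim\Ext^1(E,F) - \dim\Hom(E,F)$, the commutator $[f_1]*[f_2] - [f_2]*[f_1]$ picks up an asymmetric factor $t^{2\epsilon(E_2,E_1)} - t^{2\epsilon(E_1,E_2)}$ over each pair $(E_1,E_2)$. The crucial input is the \CY Serre duality $\Ext^{3-i}(E,F)^*\cong\Ext^i(F,E)$ of the Ginzburg algebra of $(Q,W)$, which gives
\[ \epsilon(E_2,E_1) - \epsilon(E_1,E_2) \;=\; \chi(E_1,E_2) \;=\; \langle d_1,d_2\rangle, \]
the last equality identifying the derived Euler form with the quiver skew form for Jacobi algebras of 2-acyclic quivers with potential. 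This identity simultaneously explains the divisibility of the commutator by $(t^2-1)$ (consistent with the commutativity of $H_\sc$ established in Theorem \ref{co}) and, through the evaluation $(t^{2m}-1)/(t^2-1)\big|_{t=-1} = m$, produces precisely the factor $\langle d_1,d_2\rangle$ needed to match $\{I([f_1]), I([f_2])\} = \langle d_1,d_2\rangle\cdot e(X_1)e(X_2)\cdot x^{d_1+d_2}$ in $\C[N^\oplus]$. The technical subtlety is that $\A=\mod J(Q,W)$ is not itself \CY, so the higher $\Ext$ groups controlling $\chi$ must be accessed through the ambient derived category of the Ginzburg algebra, and the asymmetry identity then has to be checked for arbitrary modules rather than only simples.
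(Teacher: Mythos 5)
Your overall route is the paper's own: the paper's proof is a citation to \cite[Theorem 5.1]{bridgeland2} (``exactly the same proof works'' for quiver representations), with the single new input being the identity
\[
\big(\dim_\C\Hom_\A(E,F)-\dim_\C\Ext^1_\A(E,F)\big)-\big(\dim_\C\Hom_\A(F,E)-\dim_\C\Ext^1_\A(F,E)\big)=\<[E],[F]\>,
\]
obtained by viewing $\A$ as the heart of a bounded t-structure in the CY$_3$ derived category of the Ginzburg algebra. That is exactly your Serre-duality step, including the correct caveat that $\A$ itself is not CY$_3$ and the higher Ext's must be taken in the ambient category. Your well-definedness and multiplicativity sketch likewise follows the cited argument; one cosmetic slip is that the action of $\Hom(E_3,E_1)$ on $\Ext^1(E_3,E_1)$ in the fibre of $(a_1,a_2)$ is trivial, not by translation (the class $\L^{e-h}$ and your conclusion are unaffected).

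The step that does not work as literally written is the bracket computation. You say the commutator ``picks up an asymmetric factor $t^{2\epsilon(E_2,E_1)}-t^{2\epsilon(E_1,E_2)}$ over each pair $(E_1,E_2)$'', as though the two convolutions were a common class multiplied by different powers of $t^2$. They are not: over a stratum $S\subset X_1\times X_2$ on which $h=\dim\Hom(E_2,E_1)$, $e=\dim\Ext^1(E_2,E_1)$, $h'=\dim\Hom(E_1,E_2)$, $e'=\dim\Ext^1(E_1,E_2)$ are constant, the two products are represented by the total spaces of two \emph{different} $\Ext^1$-bundles, each mapping to $\M$ via the middle term of the extension; these are genuinely distinct elements of $H(Q,I)$, so no such factor can be extracted termwise, and applying $I$ after dividing by $(t^2-1)$ is not a priori meaningful since $I$ is only defined on $H_\reg$. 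There are two standard repairs. (a) Follow \cite{bridgeland2}: split off the zero-sections (split extensions), where the two maps to $\M$ do agree and contribute $(q^{-h}-q^{-h'})/(q-1)\to h'-h$ at $q=t^2=1$; on the complements use the scaling $\mathbb{G}_m$-action, which exhibits their classes as $(q-1)$ times projectivized bundles with fibres $\mathbb{P}^{e-1}$, $\mathbb{P}^{e'-1}$, contributing $e-e'$; the total $(e-h)-(e'-h')=\<d_1,d_2\>$ per stratum is then your CY$_3$ identity. (b) Alternatively, note that $I$ factors as the $\C(t)$-linear pushforward $[X\to\M_d]\mapsto\Upsilon([X])\cdot x^d$ (defined on all of $H(Q,I)$ and forgetting the morphism to $\M$ apart from the grading) followed by evaluation at $t=-1$, and invoke Theorem \ref{co} as an \emph{input} --- not merely a consistency check as in your text --- to know that $f_1*f_2$ and $\{f_1,f_2\}$ lie in $H_\reg$, so that the evaluation of your $q$-power bookkeeping legitimately computes $I$. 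With either repair your final identity, and its justification via the Ginzburg-algebra Euler form, coincide with the paper's.
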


\begin{proof}
The version of this statement for Hall algebras of coherent sheaves on Calabi-Yau threefolds is the special case of \cite[Theorem 5.1]{bridgeland2}  corresponding to the constant constructible function $\one\colon \M\to \Z$. Exactly the same proof works in the case of Hall algebras of quiver representations. The important point to note is that for any pair of objects $E,F\in \A$, the difference
\[\big(\dim_\C \Hom_\A(E,F)-\dim_\C\Ext_\A^1(E,F)\big)-\big(\dim_\C \Hom_\A(F,E)-\dim_\C \Ext^1_\A(F,E)\big),\]
is given by the form $\<[E],[F]\>$.
This is because we can consider $\A$ as the heart of a bounded t-structure in a \CY triangulated category, namely the bounded derived category of the Ginzburg algebra, and the above expression is the Euler form of this category.
\end{proof}

 \subsection{}As usual the subspace $A_{>0}=\C[N^+]$ is a Lie algebra $\g$ under Poisson bracket.
 The homomorphism  $I$ of Theorem \ref{mine} immediately  induces homomorphisms
 \[I\colon \Hat{\g}_\reg \to \Hat{\g}, \qquad I\colon \Hat{G}_\reg \to \Hat{G}\]
  of the corresponding pro-nilpotent Lie algebras and pro-unipotent groups. We remarked in Section \ref{bustfinger} that the Hall algebra scattering diagram  takes values in the Lie subalgebra $\g_{\reg}\subset \g_{\Hall}$. Applying the Lie algebra homomorphism $I$  therefore  gives a scattering diagram in $\g$. 
  
  Theorem \ref{main} now follows immediately from Theorem \ref{hallsc}. Recall that according to Theorem \ref{biggy}, the substack of $\theta$-semistable objects of $\A$ defines an element \[1^\ss(\theta)\in \Hat{G}_\reg\subset \Hat{G}_{\Hall}.\]
 Applying the group homomorphism $I$
gives  a corresponding element of $\Hat{G}$. Explicitly this takes the form
\[I(1^\ss(\theta))=\exp\big(\sum_{d\in N^+}  J(d,\theta)\cdot x^d\big),\]
 where the numbers $J(d,\theta)\in \Q$ are what we referred to in the introduction as  \emph{Joyce invariants}. They are defined to be the Euler characteristics  of the elements  $\epsilon(d,\theta)\in H_{\reg}(Q,I)_d$ appearing in the decomposition \[1^\ss(\theta)=\exp\Big((t^2-1)^{-1} \cdot \sum_{d\in N^+}  \epsilon(d,\theta)\Big)\]
corresponding to the identification \eqref{sub}.

\subsection{}
We now introduce extended Poisson algebras
\[B_{\reg}=H_{\reg}(Q,I)\tensor_\C \C[M], \quad B=\C[N^{\oplus}]\tensor_\C \C[M],\]
exactly as in Sections \ref{egg1} and \ref{egg2}. Comparing the products and Poisson brackets, and noting that the map $I$ sends  $t$ to $1$ it is clear that $I$ extends to give a Poisson algebra map
\[I\colon B_{\reg}\to B,\]
and a similar map on completions.
Applying this to Theorem \ref{mool}  gives Theorem \ref{aut}.

Applying the map $I$ to Theorem \ref{mool2} shows that the action of $\Phi_\DD(\dd)$ on $\Hat{B}$ at a general point of a wall $\dd\subset \DD$ satisfies
\begin{equation}\label{climbthissummer?}z^m\mapsto z^m \cdot \sum_{\theta(d)=0}K(d,m,\theta) \cdot x^d.\end{equation}
A
trivial but important observation  is that for any $n\in N$, the element
 \[ x^n\cdot \prod_{i\in V(Q)} z_i^{\<n,e_i\>}\in B\]
is invariant under  the action of $\Hat{G}$, because for any $d\in N$ 
\[\<d,n\>+\sum_{i\in V(Q)} e_i^*(d)  \cdot \<n,e_i\> =\<d,n\>+\<n,d\>=0.\]
It follows that the action of an element of $\Hat{G}$ on $\Hat{B}$ is determined by its action on the elements $z^i$.  Applying this observation to \eqref{climbthissummer?} gives Theorem \ref{mool2}.

\subsection{}
Let $Q$ be a 2-acyclic quiver  and take notation as above. Assume that the form $\<-,-\>$ is non-degenerate. Let $\DD$ be an arbitrary scattering diagram taking values in $\g$. Any wall $\dd$ of $\DD$ is contained in a hyperplane $n^\perp$ for a unique primitive element $n\in N^+$. We say that $\dd$ is incoming  if it contains the vector \[\theta_n=\<-,n\>\in M.\] Kontsevich and Soibelman proved that a consistent scattering diagram taking values in $\g$ is uniquely specified up to equivalence by its set of incoming walls and their associated wall-crossing automorphisms.
  A particular case of this is 

\begin{thm}[Kontsevich-Soibelman]
For any quiver $Q$ there is a consistent scattering diagram $\DD$  taking values in $\g$   such that the only incoming walls are the hyperplanes $\dd_i=e_i^\perp,$ with associated wall-crossing automorphisms \[\Phi_\DD(\dd_i)=\exp\bigg(\sum_{n\geq 1}\frac{x^{ne_i}}{n^2}\bigg) \in \Hat{G}.\]
This scattering diagram is unique up to equivalence.\qed \end{thm}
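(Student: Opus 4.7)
The plan is to prove existence and uniqueness together, inductively on the truncation level $k$ of the pronilpotent Lie algebra $\Hat\g = \lim_{\longleftarrow} \g_{\leq k}$, reducing each induction step to a local cancellation problem in $\bigoplus_{\delta(n)=k}\g_n$ that can be resolved by adjusting outgoing walls.

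For existence, starting from a consistent $\g_{\leq k}$-complex with the prescribed truncated incoming walls, I would form a naive lift to $\g_{\leq k+1}$ by incorporating the new level-$(k+1)$ piece of each prescribed incoming wall-crossing. The resulting diagram $\DD'$ may fail consistency, but the obstruction, measured by the discrepancy between path-ordered products along paths with the same endpoints, projects to the identity in $G_{\leq k}$ and hence lies in $\exp\big(\bigoplus_{\delta(n)=k+1}\g_n\big)$. For each such $n$, write $n = j n_0$ with $n_0 \in N^+$ primitive; the $\g_n$-component of the obstruction can be localised to the hyperplane $n_0^\perp$ and absorbed by adding a new outgoing wall in $n_0^\perp$ with an appropriately chosen wall-crossing element. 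This uses the decomposition $\g = \g_-(\theta) \oplus \g_0(\theta) \oplus \g_+(\theta)$ from Section \ref{han} applied at a generic point of $n_0^\perp$, where $\g_0(\theta) = \bigoplus_{j\geq 1}\g_{jn_0}$, so that the obstruction is naturally a "zero-graded" element in the sense of that decomposition.

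For uniqueness, suppose $\DD_1,\DD_2$ are two consistent scattering diagrams with the same incoming walls; assume inductively that their truncations at level $k$ are equivalent. By Proposition \ref{ks2} each equivalence class is determined by $\Phi_{\DD_i}\in\Hat G$, so the element $g = \Phi_{\DD_1}\cdot\Phi_{\DD_2}^{-1}$ projects to the identity in $G_{\leq k}$ and therefore lies in $\exp\big(\bigoplus_{\delta(n)=k+1}\g_n\big)$ at level $k+1$; this $g$ corresponds via Proposition \ref{ks2} to a consistent "difference" scattering diagram $\DD_g$. Since the incoming walls of $\DD_1$ and $\DD_2$ coincide by hypothesis, their contributions to $g$ cancel, so $\DD_g$ has trivial incoming wall-crossings at level $k+1$. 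To close the induction one needs the key lemma that a consistent scattering diagram whose incoming wall-crossings are all trivial at a given level is equivalent to the trivial diagram at that level.

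The main obstacle is precisely this key lemma. I would prove it one primitive $n_0 \in N^+$ at a time: the outgoing walls in $n_0^\perp$ form a finite collection of cones not containing $\theta_{n_0}$, and a small loop encircling $\theta_{n_0}$ in a plane transverse to $n_0^\perp$ gives a consistency condition expressing the incoming wall-crossing at $n_0^\perp$ as a path-ordered product of outgoing ones, taken inside $\exp(\bigoplus_{j\geq 1}\g_{jn_0})$. When the incoming contribution is trivial, the uniqueness of ordered factorisations in this pro-nilpotent group (obtained by peeling off leading terms in $\g_{(k+1) n_0/j}$ for the smallest $j$ dividing $k+1$) forces each outgoing factor at level $k+1$ to vanish. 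Combined with the existence construction, this yields both the existence and the uniqueness up to equivalence stated in the theorem.
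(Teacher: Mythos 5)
The paper itself offers no proof of this theorem: it is attributed to Kontsevich--Soibelman and, as the \qed{} indicates, is quoted as a special case of their general principle (stated just before the theorem) that a consistent scattering diagram in $\g$ is determined up to equivalence by its incoming walls and their wall-crossing automorphisms. So you are in effect proposing a proof of that general principle, and your overall plan --- induction on the truncation level, centrality of the degree-$(k+1)$ piece, separation according to the $N^+$-grading, and reduction to the key lemma that a consistent diagram with trivial incoming data in the new degree is trivial in that degree --- is the standard Kontsevich--Soibelman/Gross--Pandharipande--Siebert argument and is a viable route; note also that for this particular $\g$ the subalgebras $\bigoplus_{j\geq 1}\g_{jn_0}$ are abelian, since $\{x^{j_1n_0},x^{j_2n_0}\}=0$, so your appeal to factorisations inside $\exp\big(\bigoplus_{j\geq 1}\g_{jn_0}\big)$ is at least unproblematic.

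However, two steps fail as written. First, your proof of the key lemma does not work: an outgoing wall in $n_0^\perp$ is a closed cone not containing $\theta_{n_0}$, and at a fixed truncation level there are only finitely many of them, so their union misses a whole neighbourhood of $\theta_{n_0}$; a small loop encircling $\theta_{n_0}$ therefore crosses no outgoing wall at all, and the resulting consistency relation involves only walls through $\theta_{n_0}$ (plus walls in other hyperplanes, which contribute to other graded pieces). It cannot express the incoming automorphism as a product of outgoing ones and gives no constraint whatsoever on the outgoing elements. The argument that does work uses loops around the codimension-two joints lying \emph{inside} $n_0^\perp$: in the new degree the wall elements are central, the $\bigoplus_j\g_{jn_0}$-component of each joint relation says that the new-degree element attached to a generic point of $n_0^\perp$ is locally constant along the hyperplane, and since it vanishes near $\theta_{n_0}$ it vanishes identically; only in rank two, where a loop around the origin meets both rays of the line $n_0^\perp$, does your picture essentially apply. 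Secondly, in the uniqueness step the assertion that the incoming contributions to $g=\Phi_{\DD_1}\cdot\Phi_{\DD_2}^{-1}$ ``cancel'' is precisely the point needing proof: $\Phi_{\DD_i}$ is a path-ordered product over all walls crossed, and equivalence of the level-$k$ truncations does not let you separate incoming from outgoing contributions; moreover for central $g$ the reconstruction $\DD(g)$ of Lemma \ref{surj} carries the same element on the incoming and outgoing parts of each hyperplane, so ``$\DD_g$ has trivial incoming data'' is literally equivalent to $g=1$, i.e.\ your reduction hides the entire content. The repair is to normalise first (by Lemma \ref{lem} the degree-$\leq k$ wall elements at general points are determined by the common truncated class, so one may assume both diagrams have the same cones and identical wall elements modulo the new degree), then use centrality to pull the degree-$(k+1)$ parts out of the path-ordered products; the differences of these parts form a consistent diagram with central elements and genuinely trivial incoming data, to which the corrected key lemma applies. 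The same joint-by-joint formulation is what your existence step needs as well: the obstruction is attached to codimension-two joints rather than to whole hyperplanes, and the correcting walls are cones containing those joints, chosen on the outgoing side.
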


 A simple calculation shows   $\Phi_\DD(\dd_i)$ acts on $\Hat{B}$ via the cluster transformation
\begin{equation}\label{clu} z^m\mapsto z^m\cdot (1+x^{e_i})^{m(e_i)}.\end{equation}
We call this scattering diagram the \emph{cluster scattering diagram} associated to the quiver $Q$.

\subsection{}
\label{genteel}
To prove Theorem \ref{same} we must give a categorical description of incoming walls.
\begin{defn}
We say that an object $E\in \A$ is \emph{self-stable} if it is $\theta$-stable for the weight  \[\theta=\<-,E\>\in M.\]
An equivalent condition is that for every non-trivial short exact sequence
\[0\lra A\lra E\lra B\lra 0\]
the inequality $\<A,B\>< 0$ holds. 
\end{defn}

We will call a quiver with potential $(Q,W)$  \emph{genteel} if the only self-stable objects in $\A$ are of the form $S_i$ for some vertex $i\in V(Q)$.


\begin{lemma}
\label{wrong}
If $(Q,W)$ is genteel then 
  the stability scattering diagram is equivalent to the cluster scattering diagram.\footnote{The proof of this Lemma is false, and the  claim is most probably incorrect. See Section \ref{erratum} below.}
\end{lemma}

\begin{proof}
For any quiver with potential let us use \eqref{climbthissummer?} to compute the wall-crossing automorphism in the stability scattering diagram at a generic point of the wall $e_i^\perp$. 
Fix $m\in M^\oplus$ and set $d=k\cdot e_i$. The extended quiver $Q^\star$ corresponding to $m$ has $m_i=m(e_i)$ extra arrows from the extended vertex $\star$ to the vertex $i$. 
At a general point $\theta\in e_i^\perp$ it is then easy to see that the framed moduli space $F(d,m,\theta)$ is isomorphic to the Grassmannian of $d$-dimensional quotients of an $m_i$-dimensional space.
Thus  we have
\[\Phi_{\dd_i}(z^m)=z^m\cdot \sum_{k\geq 0} e(\operatorname{Gr}_{k,m_i}) x^{k e_i}=z^m\cdot \sum_{k=0}^{m_i} {m_i \choose k} x^{k e_i}=z^m\cdot (1+x^{e_i})^{m_i},\]
exactly as in \eqref{clu}.  If $(Q,W)$ is genteel then by definition there are no  other incoming walls.
\end{proof}

 \subsection{}
 The proof of Theorem \ref{same} is completed by the following simple lemma. 

\begin{lemma}
Suppose $Q$ is acyclic and hence $W=0$. Then $(Q,W)$ is genteel.
\end{lemma}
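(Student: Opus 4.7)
The plan is to show that for acyclic $Q$, any non-simple object $E\in\A$ fails to be $\theta$-stable for $\theta=\<-,E\>$, by exhibiting a proper subobject $A\subset E$ with $\theta(A)\geq 0$.

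First I would record the tautological identity following from skew-symmetry of $\<-,-\>$: for any short exact sequence $0\to A\to E\to B\to 0$,
\[\theta(A)=\<A,E\>=\<A,A\>+\<A,B\>=\<A,B\>,\]
so the two formulations of self-stability given in Definition~\ref{genteel} really coincide, and in particular $\theta(E)=\<E,E\>=0$ automatically. Hence self-stability of $E$ is equivalent to $\<A,E\>< 0$ for every proper non-zero subobject $A\subset E$.

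Next, given a non-zero $E\in\A$, I would use acyclicity to produce a sink of the induced subquiver on $\supp(E)=\{i:d_i\neq 0\}$, where $d=d(E)$; such a vertex $i$ exists since any finite acyclic quiver has a sink, and the full subquiver on $\supp(E)$ is again acyclic. For such $i$ there are no arrows out of $i$ landing at any vertex in $\supp(E)$, so $a_{ij}d_j=0$ for all $j$, and consequently
\[\<e_i,d\>=\sum_{j}d_j(a_{ji}-a_{ij})=\sum_{j}d_j a_{ji}\geq 0.\]
On the other hand, because $i$ is a sink in the support, the subspace $E_i$ sits inside $E$ as a subrepresentation isomorphic to $S_i^{\oplus d_i}$, so $S_i$ embeds as a subobject $A\subset E$ with $\<A,E\>=\<e_i,d\>\geq 0$.

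Finally, if $E$ is not isomorphic to $S_i$ then $A=S_i\subset E$ is a \emph{proper} non-zero subobject with $\theta(A)\geq 0$, violating self-stability. Thus the only self-stable objects are the simples $S_i$, and $(Q,W=0)$ is genteel. The only subtle point, and the place one has to be a little careful, is ensuring that a sink of $\supp(E)$ truly gives a simple subobject; this is immediate here because $W=0$ forces no relations and the subspace at a sink vertex is automatically stable under the action of $\C Q$.
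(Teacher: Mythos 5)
Your proof is correct, and it takes a genuinely different route from the paper. The paper orders the vertices of the acyclic quiver, chooses a stability function $Z$ with $\phi(S_i)<\phi(S_j)$ for $i<j$, identifies the $Z$-semistable objects as the $S_i^{\oplus m}$, and then uses the Harder--Narasimhan filtration of a non-simple indecomposable $E$ to produce a short exact sequence $0\to A\to E\to B\to 0$ violating the self-stability inequality. You instead argue directly: after observing (via skew-symmetry, $\<A,A\>=0$) that $\theta(A)=\<A,E\>=\<A,B\>$, so that $\theta(E)=0$ is automatic and self-stability is exactly $\theta$-stability for $\theta=\<-,E\>$, you take a sink $i$ of the full subquiver on $\supp(E)$; since no arrows leave $i$ into the support, $\<e_i,d(E)\>=\sum_j d_j a_{ji}\geq 0$, and the copy of $S_i$ sitting at that vertex is a subrepresentation (the outgoing maps from $E_i$ land in zero spaces), giving a proper nonzero subobject of non-negative weight whenever $E\not\cong S_i$. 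This is more elementary --- no stability function or HN theory is invoked --- and it treats decomposable and indecomposable $E$ uniformly, whereas the paper's argument is phrased for indecomposables (implicitly using that stable objects are indecomposable); it also produces an explicit simple destabilizing subobject. What the paper's approach buys in exchange is the stronger intermediate statement classifying all semistable objects for a conveniently chosen generic stability function. One small remark: your closing caveat about $W=0$ is not really needed for the subobject claim, since a subrepresentation of a $\C Q/I$-module is automatically a $\C Q/I$-module (and for acyclic $Q$ one has $W=0$ in any case); the essential use of acyclicity is just the existence of a sink in $\supp(E)$.
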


\begin{proof}
The acyclic assumption means that we can label the vertices of $Q$  in such a way that
\[i<j \implies a_{ji}=\dim_\C \Ext^1_\A(S_j,S_i)=0.\]
In particular this means that if $i<j$ then $\<e_i,e_j\>=a_{ji}-a_{ij}<0$.
Consider a stability function $Z$ such that $\phi(S_i)<\phi(S_j)$ whenever $i<j$. Then it is easy to see by induction on the total dimension that the only semistable objects are of the form $S_i^{\oplus m}$.

Clearly any simple object is self-stable since it has no proper subobjects. Suppose $0\neq E\in \A$ is indecomposable but  not simple. Then $E$ has a non-trivial Harder-Narasimhan filtration with respect to the stability function $Z$, which means we can find $1\leq k<n$ and a short exact sequence \[0\lra A \lra E \lra B\lra  0\] such that the stable factors of $A$ are a subset of the simples $S_{k+1},\cdots, S_n$, and the stable factors of $B$ are a subset of the simples $S_{1}, \cdots, S_k$. This implies that $\<B,A\><0$ and hence $E$ is not self-stable.
\end{proof} 

It seems an interesting question to determine which quivers  have genteel potentials. One can check for example that the triangular quiver with potential $W=abc$ is genteel. Note though that this  is mutation-equivalent to an acyclic quiver.

\subsection{Erratum added May 2020}\label{erratum}The author is grateful to Bernhard Keller and Lang Mou for pointing out an error in the proof of Lemma \ref{wrong}, and apologises for any confusion caused. The problem is with the last sentence of the proof. Let $\dd$ be an incoming  wall of the stability scattering diagram, and 
suppose there exists a  $\theta$-stable object $E$ for some element $\theta\in \dd$. When $\operatorname{rank}(N)=2$ it follows from the non-degeneracy of the form $\<-,-\>$ that $\theta$ is a multiple of $\<-,E\>$, and hence $E$ is self-stable. But of course this is false in general, and  there is therefore no reason to expect Lemma \ref{wrong} to hold.

Fortunately, Theorem \ref{same} from the Introduction is nonetheless correct. Suppose $Q$ is an acyclic quiver, and let us label the vertices  as in the proof of Lemma 11.5, so that if an arrow connects a vertex $i$ to a vertex $j$ then $i<j$. Then for any $\theta\in M_{\R}$ satisfying the ordering condition $i<j\implies  \theta(e_i)<\theta(e_j),$  it is easy to see that the only possible $\theta$-stable objects are the vertex simples $S_i$. Since this ordering condition is unaffected by the  addition of elements of $M_\R$,  we can connect the chambers $M_{\R}^\pm$ by a path in $M_{\R}$ all of whose points satisfy it. It follows that the only walls we encounter when following this path in the stability scattering diagram are contained in the hyperplanes $e_i^{\perp}$, and these are described explicitly   by the (correct part) of the proof  of Lemma \ref{wrong}. In particular,  for this special ordered path, the walls and wall-crossing automorphisms encountered in the stability and cluster scattering diagrams are the same. It follows that the two scattering diagrams are equivalent.

The author would also like to point out \cite[Conjecture 3.3.4]{ks3} which he was unfortunately unaware of when the original paper was written. For more recent results on the relationship between the stability and cluster scattering diagrams, see 
{\small 
\begin{itemize}
\item Fan Qin  ``Bases for upper cluster algebras and tropical points", arXiv:1902.09507, 
\item Lang Mou, ``Scattering diagrams of quivers with potential and mutations", arXiv:1910.13714.
\end{itemize}}


\end{document}